\newtheorem{proposition}{Proposition}
\newtheorem{lemma}{Lemma}
\newtheorem{corollary}{Corollary}
\newtheorem{remark}{Remark}
\newtheorem{example}{Example}
\newcommand{\norm}[2][\relax]{\ifx#1\relax \ensuremath{\left\Vert#2\right\Vert} \else \ensuremath{\left\Vert#2\right\Vert_{#1}}\fi}
\begin{document}

\title{Rigorous estimates for the quasi-steady state approximation \\ of the Michaelis--Menten reaction
mechanism\\ at low enzyme concentrations}

\author{Justin Eilertsen\\
            Mathematical Reviews\\ American Mathematical Society\\
            416 4th Street\\Ann Arbor, MI, 48103\\
            e-mail: {\tt jse@ams.org}\\\\
        Santiago Schnell\\
            Department of Biological Sciences and\\
            Department of Applied and Computational Mathematics and Statistics\\
            University of Notre Dame\\
            Notre Dame, IN 46556\\
            e-mail: {\tt santiago.schnell@nd.edu}\\\\
        Sebastian Walcher\\
            Mathematik A, RWTH Aachen\\
            D-52056 Aachen, Germany\\
            e-mail: {\tt walcher@matha.rwth-aachen.de}}


\maketitle

\begin{abstract}
 There is a vast amount of literature concerning the appropriateness of various perturbation parameters
 for the standard quasi-steady state approximation in the Michaelis--Menten reaction mechanism, and 
 also concerning the relevance of these parameters for the accuracy of the  approximation by the 
 familiar Michaelis--Menten equation. Typically, the arguments in the literature are based on (heuristic) 
 timescale estimates, from which one cannot obtain reliable quantitative estimates for the error of 
 the quasi-steady  state approximation. We take a different approach. By combining phase plane analysis 
 with differential inequalities, we derive sharp explicit upper and lower estimates  for the duration 
 of the initial transient and substrate depletion during this transitory phase. In addition, we obtain 
 rigorous bounds on the accuracy of the standard quasi-steady state approximation in the slow dynamics 
 regime.  Notably, under the assumption that the quasi-steady state approximation is valid over the 
 entire time course of the  reaction, our error estimate is of order one in the Segel--Slemrod parameter.
\end{abstract}

\section{Introduction}\label{introsec}
We consider the classical Michaelis--Menten reaction mechanism for enzyme action. Its time evolution 
is governed by the ordinary differential equations for the substrate $s$ and intermediate complex $c$ 
concentrations
\begin{equation}\label{eqmmirrev}
\begin{array}{rclclcl}
\dot s&=& -k_1e_0s&+&(k_1s+k_{-1})c,   \\
\dot c&=& k_1e_0s&-&(k_1s+k_{-1}+k_2)c \\
\end{array}
\end{equation}
with initial values $s(0)=s_0$, $c(0)=0$, and conservation laws for the substrate and enzyme \cite{SchMai}. 
We will focus on the standard quasi-steady state (QSS) \cite{SSl} approximation with low initial enzyme 
concentration $e_0$.  In this case, the appropriate reduction is given by the Michaelis--Menten 
equation\footnote{The common choice for the initial value of the Michaelis--Menten equation~\eqref{classmmeq} 
is $s(0)=s_0$. This choice is convenient from the experimental point of view, and also compatible with 
singular perturbation theory, but it needs to be considered critically with regard to parameter 
identification experiments. 
We will discuss this point in the course of the paper.}
\begin{equation}\label{classmmeq}
    \dot s =-\dfrac{k_1k_2e_0s}{k_{-1}+k_2+k_1s}=-\dfrac{v_\infty s}{K_M+s},
\end{equation}
with the Michaelis constant 
\begin{equation}
    K_M:=\dfrac{k_{-1}+k_2}{k_1}
\end{equation}
and the limiting rate
\begin{equation}
    v_\infty=k_2e_0.
\end{equation}
For a biochemical definition of the above constants, we invite the readers to consult \cite{cornish}.

If we look geometrically at the ordinary differential equation system~\eqref{eqmmirrev}, the slow manifold 
(or QSS manifold) is defined by 
\begin{equation}\label{classsmeq}
c=g(s):=\dfrac{k_1e_0s}{k_{-1}+k_2+k_1s}=\dfrac{e_0s}{K_M+s}.
\end{equation}
The accuracy and the range of validity for the QSS reduction are not only of theoretical 
interest, but also of practical relevance for parameter identification in the laboratory. Ideally, 
practitioners wish for a suitable ``small parameter'' that ensures accuracy of the reduction, while 
measurements are taken in laboratory experiments.\footnote{The Michaelis--Menten equation is also
used for modeling biochemical reactions in signaling, metabolic and pharmacological pathways.}  In 
his seminal paper, Segel~\cite{Seg} \textcolor{black}{derived a parameter by comparing two (in part 
heuristically determined) timescales; this parameter} is widely accepted. However, the 
arguments used to derive this parameter, like several variants \cite{SSl,borghans,SchMai,tza}, 
cannot provide a quantitative estimate for the approximation error. Indeed, there seem to be no 
rigorous and meaningful quantitative estimates available in the literature (see, \cite{natural} 
for a more detailed account). Moreover, one can estimate $K_M$ and $v_\infty$, by fitting 
experimental data to the Michaelis--Menten equation~\eqref{classmmeq} under steady-state assay conditions, 
but obtaining $k_1$ would also be of interest. From a practical perspective, further information 
is needed about the onset time of the QSS regime, and the substrate depletion in the transitory 
phase. \textcolor{black}{Despite its ubiquity in analyzing enzymatic reactions, the Michaelis--Menten 
equation lacks a rigorous mathematical framework for accurately estimating key parameters $K_M$ 
and $v_\infty$ from common laboratory measurements, such as initial rates and progress curves.}

\subsection{Goal and results of the present paper}
The fundamental goal of the present paper is to provide: (i) reasonably sharp and rigorous estimates 
for the approximation error, (ii) the determination of lower and upper estimates for the onset time 
of QSS, and (iii) the substrate loss in the initial transient of the reaction. Our approach is 
inspired by arguments from singular perturbation theory. However, our methods mostly rely on 
elementary facts concerning differential equations and differential inequalities.

In Section~\ref{sec:review}, we recall some qualitative features of \eqref{eqmmirrev}. In particular, 
we recollect that the time $t_{\rm cross}$ when the solution crosses the QSS manifold suffices as 
an onset time for the slow regime. Section~\ref{estsec} contains the rigorous estimates that comprise 
the fundamental technical results of this paper. By modifying a Lyapunov function approach, we first 
obtain upper and lower limits for $t_{\rm cross}$, which is of interest in its own right. Using 
differential inequalities, we then obtain upper and lower limits for the substrate depletion in the 
transitory phase. In a final step, we derive (in two different ways) rigorous bounds for the approximation
error during the QSS regime. Generally, these turn out to be of order $\varepsilon\log(1/\varepsilon)$, where 
$\varepsilon$ here denotes the parameter proposed by Segel~\cite{Seg}. For the special situation 
corresponding to an initial value $s_0$ for the Michaelis-Menten equation at $t=0$, we obtain sharper bounds of 
order $\varepsilon$ over the whole time range.  By nature this is a rather technical section, but the 
technical expenditure also yields estimates for the reliability of (simpler) asymptotic error bounds.
\textcolor{black}{
In Section~\ref{sec:applications}, we list and discuss these asymptotic bounds with a view on their
relevance in laboratory practice. Application-oriented readers may just skim Section~\ref{estsec}, 
and proceed directly to Section~\ref{sec:applications}, which is accessible independently. In Appendix, 
we present a quick overview of parameters relevant for the dynamics and the approximation, and we list 
the relevant results and parameters for the case of small $k_1$.}

\section{Review of qualitative properties} \label{sec:review}
We first recall some qualitative features and some underlying theory. In later sections, we will focus 
less on what these results say, but rather go beyond them towards quantitative results. 
 
\subsection{The standard quasi-steady-state reduction}
The standard quasi-steady-state (sQSS) approximation, as given by \eqref{classmmeq}, is a well-known 
approximation to \eqref{eqmmirrev}. It was originally obtained by Briggs \& Haldane~\cite{BrHa}, 
and put on solid mathematical ground by Heineken, Tsuchiya \& Aris~\cite{hta}, who applied the 
singular perturbation theory developed by Tikhonov~\cite{tikh} and later by Fenichel~\cite{fenichel}. 
By singular perturbation theory, the reduction (\ref{classmmeq}) accounts with high accuracy 
for the depletion of substrate after a short transitory phase, whenever the initial enzyme 
concentration, $e_0$, is sufficiently small with respect to the initial substrate concentration, 
$s_0$. The utility of \eqref{classmmeq} emanates from the fact that initial enzyme and substrate 
concentration are controllable within an experiment. Therefore, it is as least theoretically 
possible to prepare an experiment in a way that ensures \eqref{classmmeq} is an appropriate model 
from which to estimate the kinetic parameters: $K_M$ and $v_\infty$. However, the phrasing
``sufficiently small $e_0$'' is qualitative, and certainly not sufficient to satisfy a quantitative
experimentalist or even a theorist (in certain contexts). Thus, in any practical application of 
\eqref{classmmeq}, one is forced to ask: How small should $e_0$ be to confidently replace 
\eqref{eqmmirrev} with \eqref{classmmeq}?

Several dimensionless parameters, $\varepsilon_{X}$, have been introduced in the literature that 
suggest (at least implicitly) that the error between \eqref{classmmeq} and \eqref{eqmmirrev} is 
bounded by $\gamma \cdot \varepsilon_{X}$, where $\gamma$ is a dimensional constant with units 
of concentration. From Briggs \& Haldane~\cite{BrHa}, we have
 \begin{equation}\label{bhparam}
\varepsilon_{BH}=\dfrac{e_0}{s_0},
 \end{equation}
which was also employed by Heineken, Tsuchiya \& Aris~\cite{hta}. Other notable dimensionless 
parameters include 
 \begin{equation}\label{rsparam}
\varepsilon_{RS}=\dfrac{k_1e_0}{k_{-1}+k_2}=\dfrac{e_0}{K_M},
 \end{equation}
 originally proposed by Reich and Selkov \cite{ReSe}, as well as the widely used
\begin{equation}\label{sslparam}
\varepsilon_{SSl}=\dfrac{k_1e_0}{k_{-1}+k_2+k_1s_0}=\dfrac{e_0}{K_M+s_0},
\end{equation}
which was introduced by Segel~\cite{Seg} and analyzed by Segel \& Slemrod~\cite{SSl}. Finally, 
we mention
\begin{equation}\label{cspparam}
\varepsilon_{MM}=\dfrac{k_1k_2e_0}{(k_{-1}+k_2)^2}=
    \dfrac{e_0}{K_M}\cdot\dfrac{k_2}{k_{-1}+k_2}=\varepsilon_{RS}\cdot\dfrac{k_2}{k_{-1}+k_2}
\end{equation}
which reflects the linear timescale ratio at the stationary point as $e_0\to 0$, as follows from 
\cite{Chihuahua} Proposition 1 and Remark 2. In particular, see Eq.~(9) in \cite{Chihuahua}.

All the parameters $\varepsilon_X$ mentioned above have the following property. If $\varepsilon_{X}$ 
approaches zero in a well defined manner with $e_0\to 0$, while the other reaction parameters are 
bounded above and below by positive constants, then the $s$ component of the exact solution will 
approach the approximate solution with any degree of accuracy. \textcolor{black}{Moreover, given 
these well-defined conditions\footnote{\textcolor{black}{Such restrictions are needed. For instance, 
one gets $\varepsilon_{SSl}\to 0$ when $s_0\to\infty$ but the approximation by the Michaelis-Menten 
equation \eqref{classmmeq} is incorrect; see \cite{Chihuahua}, Section 5.}}, asymptotically all the 
parameters noted are of the same order, 
e.g. $\varepsilon_{RS}= \dfrac{K_M+s_0}{K_M}\varepsilon_{SSl}=o(\varepsilon_{SSl})$ with the factor 
bounded above and below by positive constants.} 

However, contrary to an assumption 
prevalent in the literature, from these (and other proposed) parameters one cannot obtain 
quantitative information \cite{Chihuahua,natural}. Moreover, expressions like $\varepsilon_{X}\ll1$ 
are sometimes used in a literal interpretation (such as ``$10^{-2}\ll 1$'') 
\cite[for an example]{Schfar}, which misses the point.

Ideally, a dimensionless small parameter $\varepsilon_{\rm ideal}$ should control the discrepancy 
between the $s$ component of the solution of the system~\eqref{eqmmirrev} with initial value $(s_0,0)$ 
and the solution of the approximate equation~\eqref{classmmeq} (with initial value $s_0$), by an 
estimate  $\varepsilon_{\rm ideal}\cdot s_0$. Obtaining such a parameter is a principal goal of 
the present paper.

\subsection{Demarcating fast and slow dynamics of the reaction}
For the initial value $(s,c)(t=0)=(s_0,0)$ for \eqref{eqmmirrev},\footnote{Generally, for any 
initial value below the graph of the slow manifold.} we need to determine a point in time to separate 
fast and slow dynamics. Singular perturbation theory does not provide a unique choice for such a 
point in time. But, as noted by Schauer \& Heinrich~\cite{hs-inv}, and proven in Noethen \& 
Walcher~\cite{NoWa} and Calder \& Siegel~\cite{CaSi},\footnote{Calder and Siegel~\cite{CaSi} also 
proved the existence of a unique distinguished invariant manifold. An extension to the open 
Michaelis--Menten reaction mechanism was given in \cite{ERSW}.} there exists a distinguished time 
for the governing equations~\eqref{eqmmirrev} of the Michaelis--Menten reaction mechanism. We recall 
this fact:

\begin{lemma}\label{schahelem}
The solution of \eqref{eqmmirrev} with initial value $(s_0,0)$ crosses the graph of $g$ at a 
unique positive time $t_{\rm cross}$, and remains above the graph for all $t>t_{\rm cross}$. 
One has $\dot c(t)\geq 0$ for all $t\leq t_{\rm cross}$ and $\dot c(t)\leq 0$ for all 
$t\geq t_{\rm cross}$. Moreover, $\dot s(t)<0$ for all $t\geq 0$.
\end{lemma}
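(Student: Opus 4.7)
My plan is built around the algebraic factorization
\begin{equation*}
\dot c = -(k_1 s + k_{-1} + k_2)(c - g(s)),
\end{equation*}
which identifies the sign of $\dot c$ with the position of the trajectory relative to the graph of $g$. Setting $V(t):=c(t)-g(s(t))$, so $V(0)=-g(s_0)<0$, a direct evaluation on $\{V=0\}$ (where automatically $\dot s = -k_2 e_0 s/(K_M+s) < 0$ for $s>0$) yields the transversality
\begin{equation*}
\dot V|_{V=0} \;=\; -g'(s)\,\dot s \;>\; 0 .
\end{equation*}
I also note that $\dot s=0$ holds on the distinct curve $c=h(s):=k_1 e_0 s/(k_1 s + k_{-1})$, which satisfies $h(s)>g(s)$ for all $s>0$ because $k_2>0$.

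The proof then proceeds in three steps. For (i), the claim $\dot s(t)<0$ for all $t\geq 0$, I use a barrier argument: if $t_\ast>0$ were the first time with $c(t_\ast)=h(s(t_\ast))$, then a direct evaluation gives $\dot c(t_\ast) = -k_1 k_2 e_0 s/(k_1 s + k_{-1}) < 0$, so $\frac{d}{dt}(h(s)-c)|_{t_\ast} = -\dot c(t_\ast) > 0$; but $h(s)-c$ is positive on $[0,t_\ast)$ and zero at $t_\ast$, which forces this derivative to be nonpositive, a contradiction. Enzyme conservation additionally gives $c(t)\in[0,e_0)$, so $s$, $c$, and $\dot c$ are uniformly bounded for $t\geq 0$.

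The substantive step (ii), existence of a finite crossing time $t_{\rm cross}$, is the main obstacle: the lower bound $\dot V>0$ on $\{V<0\}$ degenerates as $s\to 0$, so a direct integration does not obviously give a finite crossing. My plan is to route around this via Barbalat's lemma. The conservation identity $\dot s+\dot c=-k_2 c$ integrates to $\int_0^\infty c(\tau)\,d\tau\leq s_0/k_2<\infty$; combined with boundedness of $\dot c$ (hence uniform continuity of $c$), Barbalat forces $c(t)\to 0$. Because $c(0)=0$ while $\dot c(0)=k_1 e_0 s_0>0$, the continuous function $c$ attains a positive global maximum at some $t^\ast>0$, at which $\dot c(t^\ast)=0$; by the factorization above, this is precisely $V(t^\ast)=0$, so $t_{\rm cross}:=\inf\{t>0:V(t)=0\}$ is a well-defined finite positive real.

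For (iii), the transversality forces $V$ to cross zero strictly from below to above: any hypothetical second zero $t_2>t_{\rm cross}$ would, by continuity and the positivity of $V$ just after $t_{\rm cross}$, admit a first such instance with $V>0$ just before and $V(t_2)=0$, hence $\dot V(t_2)\leq 0$, contradicting $\dot V|_{V=0}>0$. Therefore $V(t)<0$ on $[0,t_{\rm cross})$ and $V(t)>0$ on $(t_{\rm cross},\infty)$, and the factorization of $\dot c$ then gives the claimed sign pattern: $\dot c(t)\geq 0$ for $t\leq t_{\rm cross}$ and $\dot c(t)\leq 0$ for $t\geq t_{\rm cross}$.
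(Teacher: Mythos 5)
Your proof is correct. Note that the paper itself does not prove Lemma~\ref{schahelem}; it recalls the statement from Schauer \& Heinrich, Noethen \& Walcher, and Calder \& Siegel, so your argument is a self-contained substitute rather than a variant of an in-paper proof. The skeleton is the same phase-plane picture those references use (and that the paper's Figure~1 depicts): the factorization $\dot c=-(k_1s+k_{-1}+k_2)(c-g(s))$ identifies the $c$-nullcline with the graph of $g$, the strict transversality $\dot V|_{V=0}=-g'(s)\dot s>0$ excludes a second crossing and yields the sign pattern of $\dot c$, and the barrier at the $s$-nullcline $c=h(s)$, which lies strictly above the graph of $g$, gives $\dot s<0$ throughout. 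Two small points: the transversality is strict only where $s>0$, so you should record explicitly that $s(t)\geq s_0\exp(-k_1e_0t)>0$ for all $t$ (cf.\ Lemma~\ref{simplesestlem}), and likewise that $c\geq 0$ is preserved. Your genuinely distinctive step is the use of Barbalat's lemma, via $\int_0^\infty c\,d\tau\leq s_0/k_2$, to force $c\to 0$ and hence an interior positive maximum of $c$, which yields finiteness of $t_{\rm cross}$; this is valid, though somewhat heavier than necessary. A more elementary route to the same conclusion: if $c-g(s)<0$ for all time, then $c$ would be strictly increasing and bounded while $s$ decreases, so the trajectory would converge to the unique equilibrium at the origin, contradicting that $c$ increases from $c(0)=0$ through positive values. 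Either way, the existence, uniqueness, and sign assertions all follow, so the proposal stands.
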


Thus, we note a biochemical property of the reaction. The maximal concentration of complex $c$ 
is attained at $t= t_{\rm cross}$. In view of this, it seems natural to consider 
$t_{\rm cross}$ as a starting time of the slow phase.\footnote{In view of non-uniqueness, this 
designation is not meant to imply that the slow dynamics sets in precisely at $t_{\rm cross}$. We 
invite the readers to see also the discussion in Remarks \ref{lsquarerem} and \ref{lcuberem}.} We 
furthermore set 
$s_{\rm cross}:=s(t_{\rm cross})$ and $c_{\rm cross}:=c(t_{\rm cross})$.

We illustrate the $(s,c)$ phase plane geometry of the Michaelis--Menten reaction mechanism 
in {\sc Figure}~\ref{FIG00}. {\bf Lemma~\ref{schahelem}} shows that the set above the graph of 
$g$ is positively invariant. This result can be sharpened. For $0\leq \delta\leq 1$, set
\begin{equation}\label{gdeltadef}
    g_\delta(s)=\dfrac{k_1e_0s}{(1-\delta)k_2+k_{-1}+k_1s},
\end{equation}
noting that \textcolor{black}{for each $\delta$, $c=g_\delta(s)$ defines an isocline of 
system~\eqref{eqmmirrev}, along which the vector field has a fixed direction. In particular,
$c=g_0=g$ defines the $c$-isocline (thus $\dot c=0$), and $c=g_1$ defines the $s$-isocline (thus 
$\dot s=0$).} Now, set 
\begin{equation}\label{deltastardef}
    \delta^*:=\dfrac{k_1}{2k_2}(K_M+e_0)\left(1-\sqrt{1-\dfrac{4k_2}{k_1}\dfrac{e_0}{(K_M+e_0)^2}}\right).
\end{equation}
In Noethen \& Walcher~\cite[Props.\ 5 and 6, with proofs stated for the $(s,p)$-plane]{NoWa},
the following was shown:
\begin{lemma}\label{nowaestlem}
    For every $\delta\geq \delta^*$, the subset of $[0,\,s_0]\times[0,\,e_0]$ which is bounded 
    by the graphs of $g_0$ and $g_\delta$ is positively invariant for the Michaelis--Menten 
    reaction mechanism system.
\end{lemma}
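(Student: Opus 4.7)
The plan is to establish positive invariance by checking that on each piece of the boundary of
$R_\delta := \{(s,c) : 0 \leq s \leq s_0,\ g_0(s) \leq c \leq g_\delta(s)\}$
the vector field either points into $R_\delta$ or is tangent. First I would note the elementary facts that $g_\delta \geq g_0$ on $[0,s_0]$ (increasing $\delta$ decreases the denominator of $g_\delta$) and $g_\delta(s) \leq e_0$, so $R_\delta \subset [0,s_0]\times[0,e_0]$. The boundary of $R_\delta$ then splits into the lower arc $c=g_0(s)$, the upper arc $c=g_\delta(s)$, the vertical segment at $s=s_0$, and the origin. Three of these are easy: on the lower arc, Lemma~\ref{schahelem} already gives $\dot c=0$ and $\dot s<0$, while $g_0'(s)>0$, so the trajectory moves strictly above $g_0$; on the vertical segment, $c\leq g_\delta(s_0)\leq g_1(s_0)$ forces $\dot s<0$, so trajectories enter; the origin is a stationary point.

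The substance is the upper arc. I would substitute $c=g_\delta(s)$ into \eqref{eqmmirrev} to obtain the constant-direction identity on the isocline,
\begin{equation*}
\dot s = -\frac{(1-\delta)k_1 k_2 e_0 s}{D_\delta(s)}, \qquad \dot c = -\frac{\delta\, k_1 k_2 e_0 s}{D_\delta(s)}, \qquad D_\delta(s) := (1-\delta)k_2 + k_{-1} + k_1 s,
\end{equation*}
so the trajectory slope along $g_\delta$ is the constant $\delta/(1-\delta)$. Since $\dot s<0$, the trajectory enters $R_\delta$ iff $\delta/(1-\delta) \geq g_\delta'(s)$, which after computing $g_\delta'$ rearranges to
\begin{equation*}
\delta\, D_\delta(s)^{2} \;\geq\; (1-\delta)\,k_1 e_0\bigl((1-\delta)k_2 + k_{-1}\bigr).
\end{equation*}
Because $D_\delta(s)$ is increasing in $s$ while the right-hand side is constant, the condition is tightest at $s=0$, where it reduces to $\delta\bigl((1-\delta)k_2+k_{-1}\bigr)\geq (1-\delta)k_1 e_0$.

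The final step is to recognize, using $k_1 K_M = k_{-1}+k_2$, that this last inequality is equivalent to $k_2\delta^{2} - k_1(K_M+e_0)\delta + k_1 e_0 \leq 0$, whose smaller root is precisely the $\delta^*$ of \eqref{deltastardef} and whose larger root exceeds $1$ (the quadratic at $\delta=1$ evaluates to $-k_{-1}\leq 0$). Hence the inequality holds for every $\delta\in[\delta^*,1]$, and positive invariance of $R_\delta$ follows. The principal obstacle is recognizing that the binding tangency condition occurs only at $s=0$; once this reduction is justified via the monotonicity of $D_\delta$, the quadratic identification of $\delta^*$ matches \eqref{deltastardef} directly.
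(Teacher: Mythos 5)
Your proof is correct. The paper does not actually prove this lemma itself --- it is quoted from Noethen \& Walcher \cite{NoWa} --- but your argument is the standard one behind that result, and every step checks out: the isocline substitution giving $\dot s=-(1-\delta)k_1k_2e_0s/D_\delta(s)$, $\dot c=-\delta k_1k_2e_0s/D_\delta(s)$ and hence the constant trajectory slope $\delta/(1-\delta)$ along $c=g_\delta(s)$; the tangency condition $\delta D_\delta(s)^2\geq(1-\delta)k_1e_0\bigl((1-\delta)k_2+k_{-1}\bigr)$, which is indeed tightest at $s=0$ by monotonicity of $D_\delta$; and the identification of the smaller root of $k_2\delta^2-k_1(K_M+e_0)\delta+k_1e_0\leq 0$ with $\delta^*$ from \eqref{deltastardef} (the only nitpick being that the larger root equals $1$, rather than exceeds it, when $k_{-1}=0$, which does not affect the conclusion that the inequality holds on all of $[\delta^*,1]$).
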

\begin{remark}{\em 
    The expression for $\delta^*$ may look prohibitive, but less complicated estimates are readily 
    obtained \textcolor{black}{for small enzyme concentration}. For example, given $x\leq 0.1$, by 
    the mean value theorem and generous estimates, there exists $\xi\leq 0.1$ so that 
    \begin{equation*}
        \sqrt{1-x}-1=-\dfrac{1}{2\sqrt{1-\xi}}\, x \leq -0.9\, x,
    \end{equation*}
    from which one sees that
\begin{equation}\label{deltastarest}
    \delta^*\leq \frac{10}{9}\dfrac{e_0}{K_M+e_0}\leq \frac{10}{9}\varepsilon_{RS}\quad
    {\rm whenever} \quad \varepsilon_{RS}\leq 0.1.
\end{equation}
}
\end{remark}

\begin{figure}[htb!]
\centering
\includegraphics[width=10.0cm]{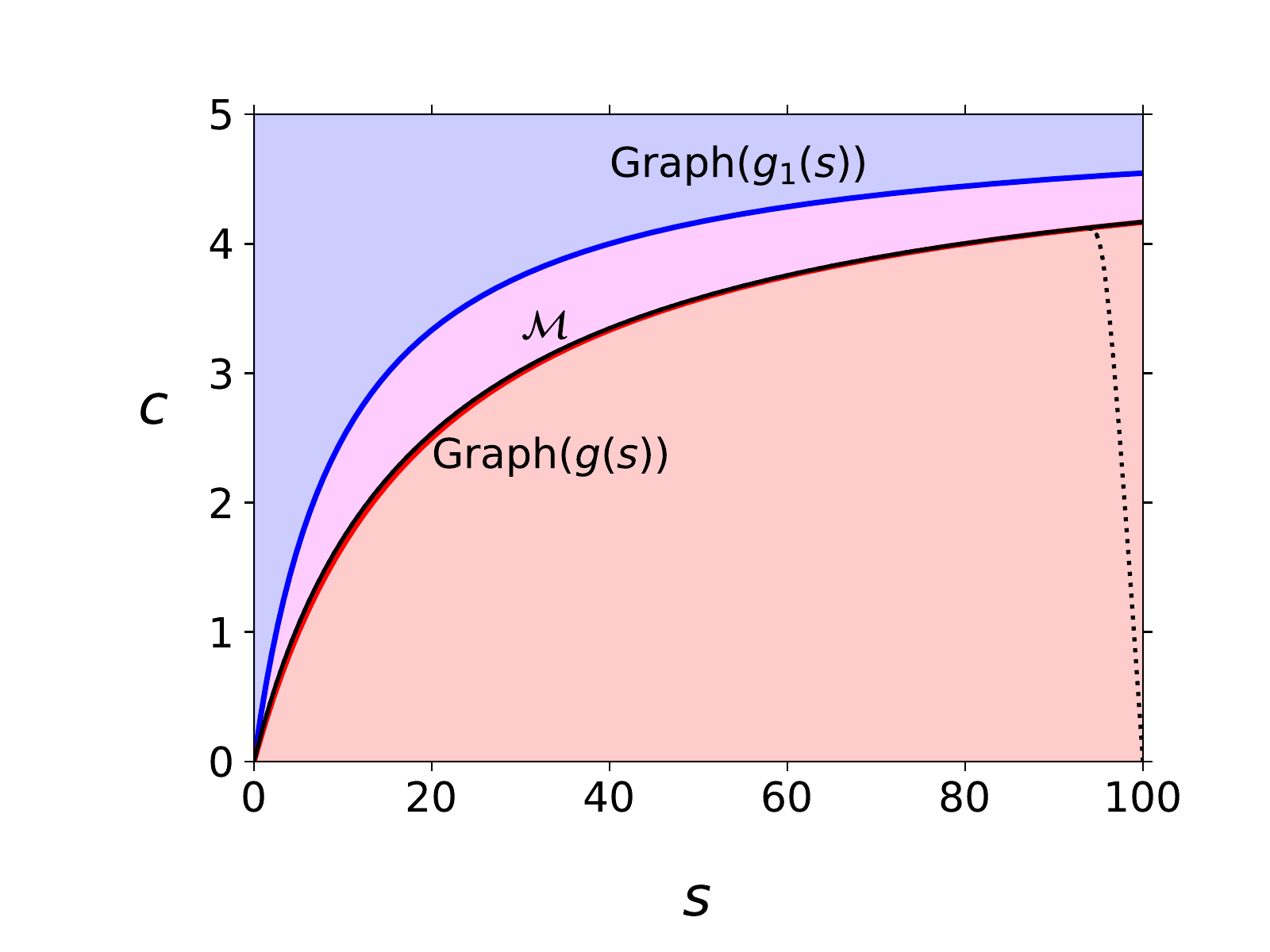}
\includegraphics[width=10.0cm]{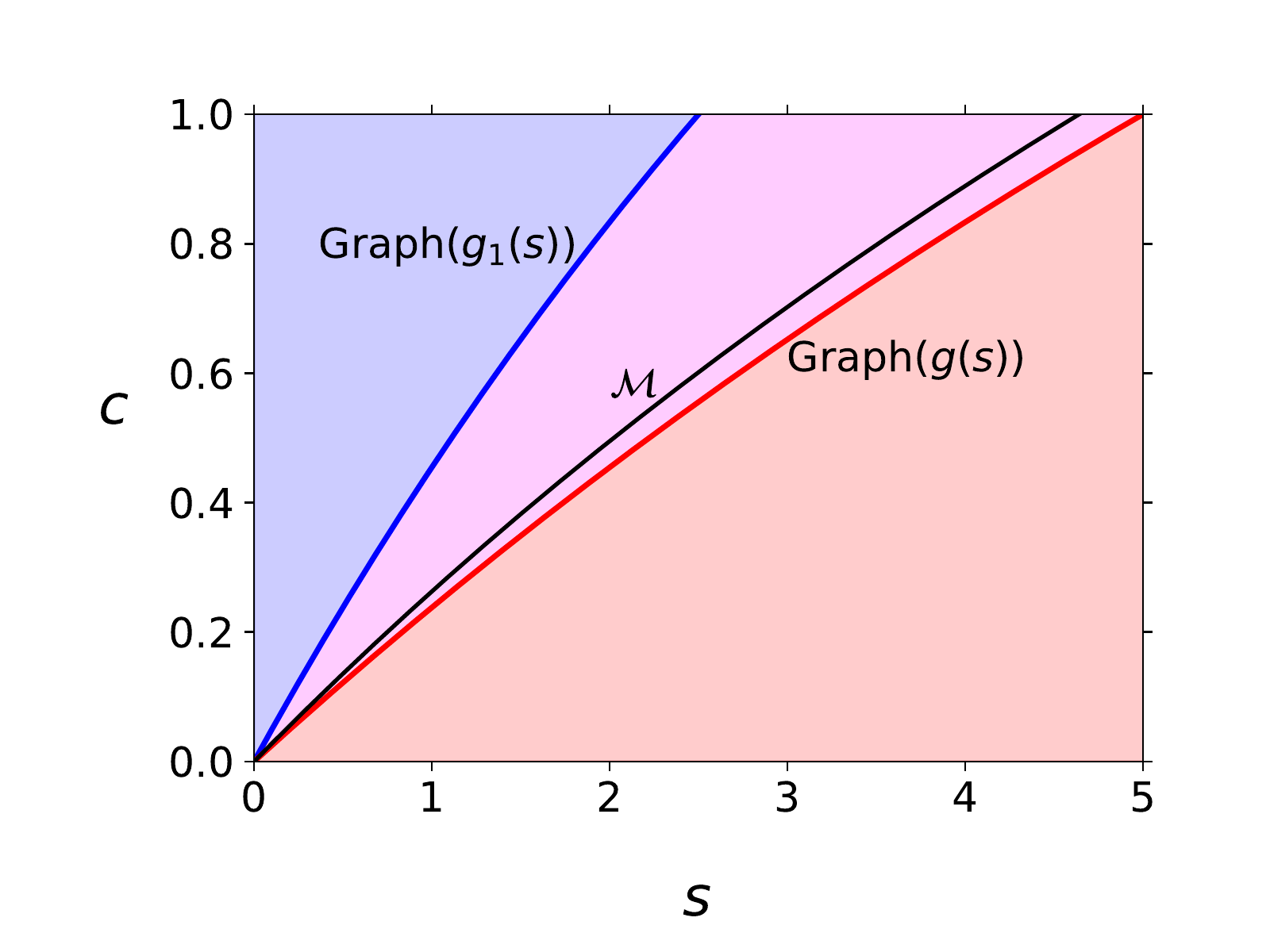}
\caption{\textbf{The $(s,c)$ phase plane geometry of the Michaelis--Menten reaction mechanism.} 
The thick red curve, the graph of $g(s)$, is the QSS variety (i.e., the $c$-nullcline) and the 
thick blue curve, the graph of $g_1(s)$, is the $s$-nullcline. The thick black curve is the invariant 
slow manifold, $\mathcal{M}$. In the shaded violet 
region between the graphs is the slow invariant manifold, $\mathcal{M}$, that connects 
the stable equilibrium at the origin with a saddle equilibrium at infinity. The vector field in 
the red shaded region below Graph($g(s)$) satisfies $\dot{c}>0$ and $\dot{s} <0$. On Graph($g(s)$), $\dot{c}=0$ 
and $\dot{s}<0.$  In the magenta region that lies above Graph($g(s)$) and below Graph($g_1(s)$), $\dot{s}<0$ 
and $\dot{c}<0.$ On $g_1(s)$, $\dot{s}=0$ and $\dot{c}<0$. In the blue shaded region above 
Graph($g_1(s)$) and below $c=e_0$, $0<\dot{s}$ and $\dot{c}<0.$ The dotted black curve in the {{\sc top}} 
panel is a single trajectory obtained via numerical integration of the mass action 
equations~\eqref{eqmmirrev} with parameters (in arbitrary units): $s_0=100,e_0=5.0, k_1=1.0,k_2=k_{-1}=10.0$. 
The trajectory approaches and intercepts the graph of $g(s)$ at $t=t_{\rm cross}.$ For $t>t_{\rm cross}$, 
the trajectory lies above Graph($g(s))$, but below $\mathcal{M}$. {{\sc Top}}: The trajectory 
enters the the magenta region and approaches $\mathcal{M}$ as time evolves forward. 
{{\sc Bottom}}: Closeup of the {{\sc top}} panel near the QSS variety. The trajectory still 
lies below $\mathcal{M}$, but becomes effectively indistinguishable from $\mathcal{M}$ as 
$t\to\infty$.} \label{FIG00}
\end{figure}
\clearpage
\newpage
\section{Critical estimates for the dynamics of the Michaelis--Menten reaction mechanism}\label{estsec}
One can rewrite system \eqref{eqmmirrev} as
\begin{equation}\label{eqmmirrev2}
\begin{array}{rcl}
\dot s&=& -k_1e_0s+(k_{-1}+k_1s)c,   \\
\dot c&=&  -(k_{-1}+k_2+k_1s)(c-g(s)).\\
\end{array}
\end{equation}
In the above system, $g(s)$ is given by \eqref{classsmeq}.
We are only interested in the solution with initial value $(s_0,0)$, which starts below the graph 
of $g$. Since $\dot c\leq 0$ for $c\geq g(s)$, we have
\begin{equation}\label{ctildemmlow}
c\leq \widetilde c:= \max_{0\leq s\leq s_0}g(s)=\dfrac{e_0s_0}{K_M+s_0}=\varepsilon_{SSl}\, s_0.
\end{equation}
We will frequently use basic properties of differential inequalities (see, for instance, 
Walter~\cite[\S9, Theorem 8]{walter}). For later use, we note two estimates for substrate 
concentration, $s$:
\begin{lemma}\label{simplesestlem}
Let $s(t)$ be the first component of the solution of \eqref{eqmmirrev2} with initial value 
$(s_0,0)$. Then, for 
all $t\geq 0$ one has 
\begin{equation}\label{simplesest}
    s(t)\geq s_0 \exp(-k_1e_0 t),
\end{equation}
and
\begin{equation}\label{notsosimplesest}
    s(t) \leq s_0\left( \cfrac{k_{-1}}{k_{-1}+k_2} +\cfrac{k_2}{k_{-1}+k_2} 
                                \exp\bigg(-\cfrac{k_1e_0K_M}{K_M+s_0}\cdot t\bigg)\right).
\end{equation}
\end{lemma}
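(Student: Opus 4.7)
The plan is to prove both inequalities by setting up differential inequalities for $s$ alone, using only the already-established bounds on $c$, and then solving the resulting linear comparison equations explicitly.

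For the lower bound \eqref{simplesest}, I would start directly from the rewritten form
\[
\dot s = -k_1 e_0 s + (k_{-1} + k_1 s)\,c.
\]
Since Lemma~\ref{schahelem} gives $\dot s \leq 0$ on the whole orbit and the trajectory stays in the closure of the positive quadrant, both $c\geq 0$ and $s\geq 0$ for all $t\geq 0$. The interaction term $(k_{-1}+k_1 s)c$ is therefore nonnegative, so $\dot s \geq -k_1 e_0\, s$. A one-line application of the standard differential inequality (Walter \cite[\S 9]{walter}), comparing with the linear ODE $\dot u = -k_1 e_0\,u$, $u(0) = s_0$, yields the claimed exponential lower bound.

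For the upper bound \eqref{notsosimplesest}, I would exploit the a priori ceiling on $c$ obtained just before the lemma, namely $c \leq \widetilde c = e_0 s_0/(K_M+s_0)$. Substituting this into the $\dot s$ equation gives
\[
\dot s \leq -k_1 e_0 s + (k_{-1}+k_1 s)\frac{e_0 s_0}{K_M+s_0}.
\]
Collecting the $s$-terms and using the identity $k_1 K_M = k_{-1}+k_2$, the right-hand side becomes
\[
\frac{e_0}{K_M+s_0}\bigl(-k_1 K_M\, s + k_{-1} s_0\bigr) = -\beta\,(s - s^\star),
\]
where
\[
\beta := \frac{k_1 e_0 K_M}{K_M+s_0},\qquad s^\star := \frac{k_{-1}}{k_{-1}+k_2}\,s_0.
\]
Thus $s$ satisfies the scalar linear differential inequality $\dot s \leq -\beta(s - s^\star)$, with $s(0)=s_0$.

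The comparison ODE $\dot u = -\beta(u-s^\star)$, $u(0)=s_0$, has the explicit solution $u(t) = s^\star + (s_0-s^\star) e^{-\beta t}$, and substituting the expressions for $s^\star$ and $s_0 - s^\star = k_2 s_0/(k_{-1}+k_2)$ reproduces the right-hand side of \eqref{notsosimplesest}. The differential inequality then gives $s(t)\leq u(t)$ for all $t\geq 0$. There is no real obstacle here; the only mildly non-obvious step is recognising the simplification afforded by $k_1 K_M = k_{-1}+k_2$, which is what makes the coefficient in the exponent collapse to the clean form $k_1 e_0 K_M/(K_M+s_0)$ and the limiting value collapse to $s_0 k_{-1}/(k_{-1}+k_2)$.
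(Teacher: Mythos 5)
Your proposal is correct and follows essentially the same route as the paper: the lower bound comes from dropping the nonnegative term $(k_{-1}+k_1s)c$ and comparing with $\dot u=-k_1e_0u$, and the upper bound comes from replacing $c$ by its a priori ceiling $\widetilde c=e_0s_0/(K_M+s_0)$ and comparing with the resulting linear ODE, whose explicit solution is exactly the right-hand side of \eqref{notsosimplesest}. The only cosmetic difference is that the paper groups the terms as $\dot s=-k_1(e_0-c)s+k_{-1}c$ before substituting $\widetilde c$, whereas you substitute first and then collect via $k_1K_M=k_{-1}+k_2$; the two computations are algebraically identical.
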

\begin{proof}
The first estimate follows readily from $\dot s\geq -k_1e_0s$, $s(0)=s_0$. As for the second, 
from the first equation in \eqref{eqmmirrev}, with \eqref{ctildemmlow}, one finds
\[
\dot s=-k_1(e_0-c)s+k_{-1} c\leq -k_1(e_0-\widetilde c)s+k_{-1} \widetilde c.
\]
Comparing $s$ with the solution of the linear differential equation 
\[
\dot x=k_1(e_0-\widetilde c)x+k_{-1} \widetilde c= -\dfrac{k_1e_0K_M}{K_M+s_0}\, x
                                            +\dfrac{k_{-1}e_0s_0}{K_M+s_0},\quad x(0)=s_0
\]
shows the assertion.
\end{proof}

\begin{remark}\label{badsestrem}{\em 
Note that the derivative of the right-hand side of \eqref{simplesest} is equal to $-k_1e_0s_0$ 
at $t=0$, which agrees with $\dot s(0)$ in \eqref{eqmmirrev}, while the right-hand side 
of \eqref{notsosimplesest} has derivative $-\dfrac{k_2}{k_1(K_M+s_0)}\cdot k_1e_0s_0$, which is 
markedly different. From this perspective, the upper estimate is not optimal.}
\end{remark}

We will proceed in three steps. First, we estimate the distance of the solution to the slow 
manifold. In a second step, we obtain lower and upper approximations for $t_{\rm cross}$, and
we compare exact and approximate solutions near the slow manifold in the third step. 
\textcolor{black}{Throughout we will not impose any a priori assumptions concerning smallness of 
parameters --- so most estimates are universally valid if not necessarily sharp  --- but indicate 
when such assumptions are made. }

\subsection{First Step: Approach to the slow manifold}
We will employ two variants of a Lyapunov function approach. The first variant is based on 
established procedure \cite[see, as an example, Section 2.1]{BeGe}. However, some adjustments are 
necessary, because system \eqref{eqmmirrev} with small parameter $e_0=\varepsilon e_0^*$ 
($e_0^*$ some reference value) is not in Tikhonov standard form with separated slow and fast 
variables. We will restrict attention to the compact positively invariant rectangle defined by 
$0\leq s\leq s_0$ and $0\leq c\leq e_0^*$.  By \eqref{ctildemmlow}, we may further restrict 
attention to the rectangle defined by $0\leq s\leq s_0$ and $0\leq c\leq \widetilde c$.
Consider
\begin{equation}\label{quadlyap}
    \cfrac{d}{dt} (c-g(s))^2 = -2(k_{-1}+k_2+k_1s)(c-g(s))^2 - 2g'(s)\dot{s}(c-g(s)).
\end{equation}
Let $L:=c-g(s)$. By invoking $\dot{s}+\dot{c}=-k_2c$, we obtain with repeated use of 
\eqref{eqmmirrev2}: \textcolor{black}{
\begin{subequations}
\begin{align}
    \cfrac{d}{dt} L^2 &= -2(k_{-1}+k_2+k_1s))L^2 -2g'(s)\dot{s}\, L\\
    &= -2(k_{-1}+k_2+k_1s)L^2 +2g'(s)\bigg[\dot{c}+k_2c\bigg]\, L\\
    &= -2(k_{-1}+k_2+k_1s)L^2 + 2g'(s)\bigg[-(k_{-1}+k_2+k_1s)\,L + k_2c\bigg]\,L\\
    &= -2(k_{-1}+k_2+k_1s)L^2 -2(k_{-1}+k_2+k_1s)g'(s)L^2 + 2g'(s) k_2c \,L\label{basicderiv}\\
    &\leq -2\bigg[\min_{s\in [0,s_0]}(k_{-1}+k_2+k_1s)(1+g'(s))\bigg]L^2 
        + 2\max_{s\in [0,s_0]} |g'(s)| \cdot k_2\widetilde c\cdot |L|.
\end{align}
\end{subequations}
}
Now 
\begin{equation}\label{gprime}
g'(s)=\cfrac{K_Me_0}{(K_M+s)^2}\geq 0;\quad \max_{s\in [0,s_0]} |g'(s)|=\cfrac{e_0}{K_M},
\end{equation}
therefore
\[
\min_{s\in [0,s_0]}(k_{-1}+k_2+k_1s)(1+g'(s))\geq \min_{s\in [0,s_0]}(k_{-1}+k_2+k_1s)=k_{-1}+k_2.
\]
Altogether we obtain with \eqref{ctildemmlow} 
\begin{equation}\label{diffineq}
 \cfrac{d}{dt} L^2\leq -2(k_{-1}+k_2)\cdot L^2
        + 2\cfrac{e_0}{K_M}\cdot\dfrac{k_2e_0s_0}{K_M+s_0}\cdot |L|.
\end{equation}
Next, we apply the Cauchy-Schwarz inequality
\[
2\cfrac{e_0}{K_M}\cdot\dfrac{k_2e_0s_0}{K_M+s_0}\cdot |L|\leq \sigma L^2
    +\left(2\cfrac{e_0}{K_M}\cdot\dfrac{k_2e_0s_0}{K_M+s_0}\right)^2\cdot\dfrac{1}{2\sigma},
\]
which holds for any $\sigma>0$.
For $\sigma=k_{-1}+k_2$, this yields
\begin{equation}\label{diffineqvar}
 \cfrac{d}{dt} L^2\leq -(k_{-1}+k_2)\cdot L^2+\frac12 \cdot\ (k_{-1}+k_2)\cdot 
        \left(\varepsilon_{SSl}\cdot\varepsilon_{MM}\right)^2\cdot s_0^2.
\end{equation}

\begin{lemma}\label{approachlem}
Let $t_0\geq 0$ be given, with $L(t_0)=L_0$. Then, for all $t\geq t_0$ one has
\begin{equation}\label{Lestimgen}
    L^2\leq L_0^2\exp(-k_1K_M(t-t_0))
        +\frac12\varepsilon_{SSl}^2\varepsilon_{MM}^2\left(1-\exp(-k_1K_M(t-t_0))\right).
\end{equation}
In particular, with $t_0=0$ and   $L(0)=e_0s_0/(s_0+K_M)=s_0\varepsilon_{SSl}$ for 
the initial value $(s_0,0)$,
\begin{equation}\label{Lestim}
\begin{array}{rcl}
     \dfrac{L^2}{s_0^2}& \leq& \varepsilon_{SSl}^2\left(\exp(- k_1K_Mt) 
        + \frac12 \varepsilon_{MM}^2 (1-\exp(- k_1K_Mt))\right)\\
     & =&\varepsilon_{SSl}^2\left(\exp(- (k_{-1}+k_2)t) 
        + \frac12 \varepsilon_{MM}^2 (1-\exp( - (k_{-1}+k_2)t))\right).
\end{array}
\end{equation}
\end{lemma}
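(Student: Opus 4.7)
The plan is to read \eqref{diffineqvar} as a linear scalar differential inequality for the non-negative quantity $y(t):=L(t)^2$, of the form
\begin{equation*}
\dot y \leq -\alpha\, y + \beta, \qquad \alpha := k_{-1}+k_2 = k_1 K_M,\qquad \beta := \tfrac{1}{2}(k_{-1}+k_2)\,\varepsilon_{SSl}^2\varepsilon_{MM}^2\, s_0^2,
\end{equation*}
and then invoke the elementary comparison theorem for scalar ODEs (precisely the result from Walter \cite{walter} already cited before Lemma \ref{simplesestlem}). Concretely, the plan is to compare $y$ on $[t_0,\infty)$ with the solution $z$ of the corresponding linear equation $\dot z=-\alpha z+\beta$ with $z(t_0)=L_0^2$, so that $y(t)\leq z(t)$ for all $t\geq t_0$.

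Next I would solve this linear ODE explicitly by the integrating factor $e^{\alpha(t-t_0)}$ (or by recognizing the unique stationary value $\beta/\alpha$ of the equation), obtaining
\begin{equation*}
z(t) = L_0^2\, e^{-\alpha(t-t_0)} + \frac{\beta}{\alpha}\bigl(1-e^{-\alpha(t-t_0)}\bigr).
\end{equation*}
The algebraic point to check is that $\beta/\alpha = \tfrac{1}{2}\varepsilon_{SSl}^2\varepsilon_{MM}^2\, s_0^2$ and $\alpha = k_1 K_M = k_{-1}+k_2$; substituting these gives exactly \eqref{Lestimgen}.

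For the specialization to the initial datum $(s_0,0)$ at $t_0=0$, I would compute $L(0)=c(0)-g(s(0))=-g(s_0)=-\varepsilon_{SSl}\, s_0$ from \eqref{classsmeq} and \eqref{sslparam}, so that $L_0^2 = \varepsilon_{SSl}^2 s_0^2$. Dividing through by $s_0^2$ and using $\alpha=k_1K_M=k_{-1}+k_2$ then yields both displayed forms in \eqref{Lestim}. I do not anticipate any genuine obstacle, since the analytical content sits entirely in the differential inequality \eqref{diffineqvar} derived above; the only care needed here is bookkeeping of the identifications $\beta/\alpha$ and $\alpha=k_1K_M$, and the observation that the sign of $L(0)$ is immaterial because $y=L^2$.
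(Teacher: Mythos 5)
Your proposal is correct and follows essentially the same route as the paper: both compare $V:=L^2$ satisfying \eqref{diffineqvar} with the explicit solution of the associated linear equation $\dot V=-(k_{-1}+k_2)V+\frac12(k_{-1}+k_2)\varepsilon_{SSl}^2\varepsilon_{MM}^2 s_0^2$ via a standard differential inequality (Gronwall-type) argument, and then specialize with $L(0)=-g(s_0)=-s_0\varepsilon_{SSl}$. The bookkeeping $\alpha=k_1K_M=k_{-1}+k_2$ and $\beta/\alpha=\frac12\varepsilon_{SSl}^2\varepsilon_{MM}^2 s_0^2$ is exactly what the paper relies on.
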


\begin{proof}
Compare \eqref{diffineqvar} with the differential equation
\begin{equation*}
\cfrac{dV}{dt}= -(k_{-1}+k_2)\cdot V
    +\frac12(k_{-1}+k_2)\varepsilon_{SSl}^2\varepsilon_{MM}^2\cdot s_0^2 
\end{equation*}
for $V:=L^2$. The explicit solution of this linear equation for $V$ and a differential 
inequality argument (or Gronwall) yield the asserted estimates.
\end{proof}

\begin{remark}\label{lsquarerem}{\em 
The first factor on the right hand side of \eqref{Lestim} is the square of the 
Segel--Slemrod parameter $\varepsilon_{SSl}$, and the factor inside the second bracket 
is the square of the local timescale parameter $\varepsilon_{MM}$. Since $L/s_0$ is 
generally bounded by $\varepsilon_{SSl}$, the relevant parameter for the distance to
the QSS manifold will be $\varepsilon_{MM}$. \textcolor{black}{This and the above holds 
for any choice of parameters.}

For a more detailed inspection, we assume $\varepsilon_{MM}<1$, so the right hand side
of \eqref{Lestim} decreases with $t$. Then, for all $t\geq \widehat t$,
\begin{equation}\label{toldcrit}
\widehat t:=\dfrac{2}{k_{-1}+k_2}\cdot 
    \log\left(\dfrac{(k_{-1}+k_2)^2}{k_1k_2e_0} \right)
        =\dfrac{2}{k_1K_M}\log \dfrac{1}{\varepsilon_{MM}}
\end{equation}
one obtains that
\begin{equation}\label{eqLest}
 \dfrac{|c-g(s)|}{s_0} \leq  \sqrt\frac32\cdot\varepsilon_{SSl}\cdot \varepsilon_{MM}.
\end{equation}
To verify the inequality, it suffices to do so for $t=\widehat t$, and this, in turn, 
follows from the fact that
 \begin{equation*}
\exp(-(k_{-1}+k_2)t)={\varepsilon_{MM}}^2
 \end{equation*}
is solved by $t=\widehat t$.

This provides a first estimate for the approach to the slow manifold.}
\end{remark}

\begin{remark}\label{someelserem} {\em 
One may consider similar estimates for complex QSS, with no a priori reference to 
singular perturbations, in the system with substrate inflow:
\begin{equation*}
\begin{array}{rclclcl}
\dot s&=&k_0 -k_1e_0s&+&(k_{-1}+k_1s)c   \\
\dot c&=& k_1e_0s&-&(k_{-1}+k_2+k_1s)c. \\
\end{array}
\end{equation*}
Here, it is appropriate to choose initial values $s(0)=c(0)=0$. The chain of inequalities 
above works similarly, with the crucial difference that $\dot s=k_0 -\dot c+k_2c$. So, 
the assumption $e_0=\varepsilon e_0^*$ will no longer result in an order $\varepsilon^4$ 
term in the analogue of\eqref{Lestim} (unless $k_0$ is also of order $\varepsilon$); 
only order $\varepsilon^2$ can be salvaged. For more details, please see the discussion 
in Eilertsen et al.~\cite[Subsection 4.4]{ERSW}.}
\end{remark}

For $t\leq t_{\rm cross}$ an alternative Lyapunov function approach is suggested by 
{\bf Lemma~\ref{schahelem}}. We start with a variant of equation~\eqref{quadlyap}:
\begin{equation}\label{linlyap}
    \cfrac{d}{dt} (c-g(s)) = -(k_{-1}+k_2+k_1s)(c-g(s)) - g'(s)\dot{s}.
\end{equation}
Again, let $L:=c-g(s)$. Similar to the derivation of {\bf Lemma~\ref{approachlem}}, we 
find for $t\leq t_{\rm cross}$ (using $c-g(s)\leq 0$):
\begin{subequations}
\begin{align}
    \cfrac{ d L}{d t} &= -(k_{-1}+k_2+k_1s))L -g'(s)\dot{s}\\
    &=-(k_{-1}+k_2+k_1s)L +g'(s)\left(\dot{c}+k_2c\right)\\
    &= -\left((1+g'(s))(k_{-1}+k_2+k_1s)\right)L+k_2g'(s)c\\
     &= -\left((1+g'(s))(k_{-1}+k_2+k_1s)\right)L+k_2g'(s)(L+g(s)).
\end{align}
\end{subequations}
So, we have 
\begin{lemma}\label{linestlem} 
Consider the solution of \eqref{eqmmirrev} with initial value $(s_0,0)$ at $t=0$. Then, 
for $0\leq t\leq t_{\rm cross}$,
\begin{equation}\label{linLest}
    \cfrac{ d L}{dt}= -A\,L + B
\end{equation}
with 
\[
\begin{array}{rcl}
  A :  & =&k_{-1}+k_2+k_1s+g'(s)(k_{-1}+k_1s),\\
  B :  & =&k_2g'(s)g(s).
\end{array}
\]
\end{lemma}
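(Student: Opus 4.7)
The statement is essentially a bookkeeping lemma that records in closed form the linear ODE derived in the chain of equalities immediately preceding it, so the plan is to make the algebra explicit and verify the claimed form. I would start from $L=c-g(s)$ and differentiate to obtain $\dot L=\dot c-g'(s)\dot s$. The conservation identity $\dot s+\dot c=-k_2 c$ (which is just the sum of the two equations in \eqref{eqmmirrev}) lets me replace $\dot s=-\dot c-k_2 c$, so that
\begin{equation*}
\dot L=(1+g'(s))\,\dot c+k_2 g'(s)\,c.
\end{equation*}
Using the compact form of the $\dot c$ equation in \eqref{eqmmirrev2}, namely $\dot c=-(k_{-1}+k_2+k_1s)L$, this becomes
\begin{equation*}
\dot L=-(1+g'(s))(k_{-1}+k_2+k_1s)\,L+k_2 g'(s)\,c,
\end{equation*}
which matches the computation given just above the lemma.

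Next, I would eliminate $c$ in favor of $L$ by writing $c=L+g(s)$, so the final term becomes $k_2 g'(s)L+k_2 g'(s)g(s)$. The main point, and the only nontrivial observation, is that the $k_2 g'(s)L$ piece cancels exactly against the $k_2 g'(s)L$ contribution hidden inside $g'(s)(k_{-1}+k_2+k_1s)L$: expanding,
\begin{equation*}
-(1+g'(s))(k_{-1}+k_2+k_1s)L+k_2 g'(s)L=-(k_{-1}+k_2+k_1s)L-g'(s)(k_{-1}+k_1s)L.
\end{equation*}
Collecting the $L$-coefficient gives exactly $-A$ with $A=k_{-1}+k_2+k_1s+g'(s)(k_{-1}+k_1s)$, and the remaining inhomogeneous term is $B=k_2 g'(s)g(s)$, as claimed.

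I do not foresee a genuine obstacle here: no inequality, no comparison argument, and no use of the sign condition $c\le g(s)$ is required to establish the identity itself — the restriction $t\le t_{\rm cross}$ only serves to flag the regime in which the lemma will be applied in the next step (where presumably the sign of $L$ will be used together with $A>0$ and $B\ge 0$ to extract bounds from $\dot L=-AL+B$ via a differential inequality). Thus the proof is a one-line chain of substitutions, and I would present it essentially as displayed above, making the cancellation of the $k_2 g'(s)L$ term explicit so that the appearance of $k_{-1}+k_1 s$ (rather than $k_{-1}+k_2+k_1s$) in the definition of $A$ is transparent.
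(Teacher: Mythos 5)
Your proposal is correct and follows essentially the same route as the paper: the paper derives the identity by the identical chain of substitutions (differentiate $L=c-g(s)$, use $\dot s+\dot c=-k_2c$ and $\dot c=-(k_{-1}+k_2+k_1s)L$, then write $c=L+g(s)$ and collect the $L$-terms), displayed immediately before the lemma. Your side remark is also accurate — the computation is a pure identity, and the restriction to $t\le t_{\rm cross}$ (and the sign of $L$) only matters for the differential-inequality arguments that follow.
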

Note that when $g(s)=c$ and $s>0$, then ${dL}/{dt}= k_2g'(s) g(s)> 0$.

\subsection{Second Step: The crossing time}
We will use {\bf Lemma~\ref{linestlem}} to compute upper and lower bounds, $t_{u}$ 
and $t_{\ell}$, such that $t_{\rm cross}\in [t_{\ell},t_u]$. The strategy will be 
to extract $t_{u}$ and $t_{\ell}$ from appropriate differential inequalities. We 
will express most of our estimates via the Segel--Slemrod parameter~$\varepsilon_{SSl}$. 
Although --- as mentioned in the Introduction --- the parameters used by Briggs and 
Haldane, or by Reich and Selkov, would be equally applicable in any well-defined 
limit with $e_0\to 0$ (and all other parameters in a compact subset of the open 
positive orthant), the Segel--Slemrod parameter turns out to be the most convenient.

We first determine a lower bound $t_\ell$. By \eqref{classsmeq} and \eqref{gprime},
we obtain
\[
g(s)\leq\dfrac{e_0s_0}{K_M+s_0}\text{  and  } g'(s)\leq \dfrac{e_0}{K_M}
            \text{  for  }0\leq s\leq s_0.
\]
Now, with the notation of {\bf Lemma~\ref{linestlem}}, we have
\begin{equation*}
    \begin{array}{rcl}
       A  & =& k_{-1}+k_2+k_1s+\dfrac{e_0K_M}{(K_M+s)^2}(k_{-1}+k_1s) \\
         & \leq& k_1(K_M+s)\left(1+\dfrac{e_0K_M}{(K_M+s)^2}\right)\\
         &=&k_1(K_M+s)+k_1e_0\dfrac{K_M^2}{(K_M+s)^2}\\
         &\leq &k_1(K_M+e_0+s_0)=:A^*,
    \end{array}
    \end{equation*}
and furthermore\footnote{For $B^*$, we simply use $\max g\cdot\max g'$, both on 
$[0,\,s_0]$. The global maximum of $B$ on $[0,\,\infty)$ equals $4/27\cdot k_2e_0^2/K_M$. 
For the record, we point out that using this estimate would not make an essential 
difference.}
    \begin{equation*}
        B\leq k_2\cdot \dfrac{e_0s_0}{K_M+s_0}\cdot\dfrac{e_0}{K_M}=:B^*.
    \end{equation*}
Since for $t\leq t_{\rm cross}$ one has $L\leq 0$, the differential 
equation~\eqref{linLest} implies the inequality
\begin{equation}
    \dfrac{dL}{dt}\leq -A^*\,L+B^*.
\end{equation}
Thus, defining $L^*$ by
\begin{equation*}
    \dfrac{dL^*}{dt}=-A^*L^*+B^*,\quad L^*(0)=-g(s_0),
\end{equation*}
one obtains that $L(t)\leq L^*(t)$ for $0\leq t\leq t_{\rm cross}$. Explicitly,
\begin{equation}\label{lowerestlin}
\begin{array}{rcl}
    L^* & =&-\left(\dfrac{B^*}{A^*}+g(s_0)\right)\exp(-A^*t)+\dfrac{B^*}{A^*} \\
    \\
     & =&s_0\varepsilon_{SSl}\left(-\left(1+\dfrac{k_2}{k_1K_M(1+\varepsilon_{SSl})}
            \varepsilon_{SSl}\right)\,\exp{(-(1+\varepsilon_{SSl})\lambda t)}
            +\dfrac{k_2}{k_1K_M(1+\varepsilon_{SSl})}\varepsilon_{SSl}\right),
\end{array}
\end{equation}
where $\lambda:=k_1(K_M+s_0)$. Now define $t_\ell$ by $L^*(t_\ell)=0$. A 
straightforward calculation shows
\begin{equation}\label{telldef}
\begin{array}{rcl}
t_\ell&=&\dfrac{1}{k_1(K_M+s_0)(1+\varepsilon_{SSl})}\,
    \log\left(1+\dfrac{k_{-1}+k_2}{k_2}(1+\varepsilon_{SSl})\cdot\dfrac{1}{\varepsilon_{SSl}}\right)\\
     & =&\dfrac{1}{(k_1s_0+k_{-1}+k_2)(1+\varepsilon_{SSl})}\,
     \log\left(1+\dfrac{k_{-1}+k_2}{k_2}(1+\varepsilon_{SSl})\cdot\dfrac{1}{\varepsilon_{SSl}}\right)
\end{array}
\end{equation}
This provides a lower estimate:

\begin{lemma}\label{lowertimelem}
For the solution of \eqref{eqmmirrev}, with initial value $(s_0,0)$, one has
$t_{\rm cross}\geq t_\ell$.
\end{lemma}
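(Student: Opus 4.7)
The plan is to conclude the bound directly from the comparison argument that the authors have already set up in the two paragraphs preceding the lemma. All the analytical work is encoded in Lemma~\ref{linestlem} and in the pointwise bounds $A \leq A^*$ and $B \leq B^*$, so what remains is essentially a differential-inequality/monotonicity argument.

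First, I would justify the differential inequality for $L$ on $[0, t_{\rm cross}]$. By Lemma~\ref{schahelem}, $L = c - g(s) \leq 0$ on this interval, so $-L \geq 0$. Combined with $0 \leq A \leq A^*$ this gives $-AL \leq -A^*L$, and together with $B \leq B^*$ we obtain
\begin{equation*}
    \frac{dL}{dt} = -A L + B \leq -A^* L + B^* \qquad \text{for } 0 \leq t \leq t_{\rm cross}.
\end{equation*}
Since $L(0) = -g(s_0) = L^*(0)$, the standard comparison principle for scalar ODEs (e.g.\ Walter~\cite[\S9, Theorem 8]{walter}, already invoked in the paper) yields $L(t) \leq L^*(t)$ for all $t \in [0, t_{\rm cross}]$.

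Second, I would verify that $L^*$ is strictly increasing on $[0, \infty)$ with a unique zero at $t_\ell$. From the explicit formula \eqref{lowerestlin}, $L^*(0) = -g(s_0) < 0$ and $L^*(t) \to B^*/A^* > 0$ as $t \to \infty$, and $L^*$ approaches this limit monotonically since $\frac{dL^*}{dt} = -A^*(L^* - B^*/A^*)$ has constant sign whenever $L^* - B^*/A^*$ does. Hence $L^*$ has exactly one root $t_\ell$, which is given by \eqref{telldef}.

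Third, combining the two facts: $L(t_{\rm cross}) = 0$ by definition of $t_{\rm cross}$, and the comparison bound forces $0 = L(t_{\rm cross}) \leq L^*(t_{\rm cross})$. By strict monotonicity of $L^*$, this is equivalent to $t_{\rm cross} \geq t_\ell$, which is the claim.

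I do not expect a real obstacle; the only subtle point is ensuring the comparison estimate is valid up to and including $t = t_{\rm cross}$. This is fine because the sign condition $L \leq 0$ used in the derivation of $\frac{dL}{dt} \leq -A^*L + B^*$ holds on the closed interval $[0, t_{\rm cross}]$, so continuity allows one to extend the comparison all the way to the endpoint without needing any further assumption.
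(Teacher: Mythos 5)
Your proposal is correct and follows essentially the same route as the paper: both rest on the comparison $L\leq L^*$ already established in the text preceding the lemma, combined with the monotonicity/sign structure of $L^*$. The paper phrases the final step as a contradiction evaluated at $t_\ell$, whereas you evaluate at $t_{\rm cross}$ (where the comparison is unambiguously valid); your version is, if anything, slightly more careful about the domain on which the differential inequality holds.
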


\begin{proof}
Assume that $t_{\rm cross}<t_\ell$, then $L(t_\ell)>0$. This is a contradiction to 
$L(t_\ell)\leq L^*(t_\ell)=0$.
\end{proof}

Recall that Segel and Slemrod~\cite{SSl} introduced
\begin{equation}\label{SSltime}
    t_{SSl}:=\dfrac{1}{k_1(K_M+s_0)}=\dfrac{1}{k_{-1}+k_2+k_1s_0}
\end{equation}
to estimate the duration of the fast transient. This defines the appropriate timescale at 
the very start, but as we show below, it cannot reflect the full transient phase.

There is a slightly simplified estimate for $t_\ell$:

\begin{equation*}
    \begin{array}{rcl}
     t_\ell    & =&t_{SSl}\dfrac{1}{1+\varepsilon_{SSl}}\log\left(\dfrac{1}{\varepsilon_{SSl}} 
        \bigg[\dfrac{k_1K_M}{k_2}+\varepsilon_{SSl}\left(\dfrac{k_1K_M}{k_2}+1\right)\bigg]\right)\\
         & \geq&t_{SSl}\dfrac{1}{1+\varepsilon_{SSl}}\log\left(\dfrac{1}{\varepsilon_{SSl}} 
            \dfrac{k_1K_M}{k_2}\right)\\
         &=&t_{SSl}\dfrac{1}{1+\varepsilon_{SSl}}\log\left(\dfrac{1}{\varepsilon_{SSl}}\right)
            +\log\left( \dfrac{k_1K_M}{k_2}\right)\\
         &\geq & t_{SSl}\left(1-\varepsilon_{SSl}\right)\left(\log\left(\dfrac{1}{\varepsilon_{SSl}}\right)
            +\log\left( \dfrac{k_1K_M}{k_2}\right)\right).\\
    \end{array}
\end{equation*}
Therefore, we may define
\begin{equation}\label{tlowdag}
    t_\ell^\dagger:=t_{SSl}(1-\varepsilon_{SSl})\left(\log\left(\dfrac{1}{\varepsilon_{SSl}}\right)
        +\log\left( \dfrac{k_{-1}+k_2}{k_2}\right)\right)
\end{equation}
as a lower estimate for the crossing time. \textcolor{black}{In the limiting case $e_0\to 0$, an} 
asymptotic expansion of the right-hand side yields
\begin{equation}\label{tlowdagas}
    t_\ell^\dagger\sim t_{SSl}\left[ \log\dfrac{1}{\varepsilon_{SSl}}+\log\dfrac{k_{-1}+k_2}{k_2}+o(1)
    \right]
\end{equation}

For the slow timescale, chosen (in consistency with the choice of the small parameter) as 
$\tau=\varepsilon_{SSl}t$, the above observations yield a lower estimate with leading term of order 
$\varepsilon_{SSl}\cdot\log(1/\varepsilon_{SSl})$ in the asymptotic expansion. 

\begin{remark}\label{lcuberem}{\em 
At this point, it seems appropriate to reconsider the notion ``onset of slow dynamics'' 
\textcolor{black}{for the case of small $\varepsilon_{SSl}$}. For 
system~\eqref{eqmmirrev}, we noted that the distinguished time $t_{\rm cross}$ 
{\rm (see, {\bf Lemma~\ref{schahelem}} and the following ones)} is a natural choice from a 
biochemical perspective. But singular perturbation theory does not provide a precisely 
defined time for the onset of the slow phase. The following two observations are based on 
a fundamental criterion for slow dynamics, namely closeness of the solution to the 
QSS manifold:
\begin{itemize}
\item[i.] Equation~\eqref{lowerestlin} shows that $|L^*(t_{SSl})/s_0|\approx 
\varepsilon_{SSl}\exp(-1)$. But, since $|L/s_0|$ can always be estimated above by terms of 
order $\varepsilon_{SSl}$ {\rm [see, \eqref{Lestim}]}, this inequality does not indicate 
closeness to the QSS manifold. Thus, the onset of slow dynamics cannot be assumed near 
$t_{SSl}$, and the Segel--Slemrod time $t_{SSl}$ seriously underestimates the duration 
of the transient phase.
\item[ii.] One may replace the condition $L^*(t)=0$ from \eqref{lowerestlin} by an \textcolor{black}{order 
$\varepsilon_{SSl}^2$ closeness condition}, requiring $L^*(t)/s_0 \geq -M\cdot \varepsilon_{SSl}^2$,
with some positive constant $M$, as the defining characteristic of the slow phase. A 
provisional definition of $t_{ons}$ by $L^*(t_{ons})/s_0 = -M\cdot \varepsilon_{SSl}^2$ 
yields $L^*(t)/s_0\geq -M\cdot \varepsilon_{SSl}^2$ for $t_{ons}\leq t\leq t_{\rm cross}$. 
Similar to the derivation of \eqref{telldef}, one obtains an estimate
\begin{equation}\label{tonsdef}
    t_{ons}=t_{SSl}\log(M^*/\varepsilon_{SSl})+\cdots
\end{equation}
with some constant $M^*$, and the dots representing higher order terms. Thus, we have the 
same lowest order asymptotic term $\log(1/\varepsilon_{SSl})$ as for $t_\ell^\dagger$.
\end{itemize}
}
\end{remark}

We proceed to estimate initial substrate depletion:
\begin{proposition}\label{initlosslowprop}
One has the inequality
\begin{equation*}
    \dfrac{s(t_\ell^\dagger)}{s_0}\leq \dfrac{k_{-1}}{k_{-1}+k_2}
        +\dfrac{k_{2}}{k_{-1}+k_2}\exp\left(-\dfrac{\varepsilon_{SSl}K_M}{(K_M+s_0)}(1-\varepsilon_{SSl})
        \cdot \log\left(\dfrac{k_1K_M}{\varepsilon_{SSl}k_2}\right)\right).
\end{equation*}
Moreover, when
\begin{equation}\label{lowertimecond}
    \varepsilon_{SSl}\cdot \log\left(\dfrac{k_1K_M}{k_2\varepsilon_{SSl}}\right)<1,
\end{equation}
then
\begin{equation}\label{lowscr}
  \dfrac{s_0-s_{\rm cross}}{s_0}\geq \dfrac{s_0-s(t_\ell^\dagger)}{s_0}\geq
        \dfrac{k_2}{2k_1(K_M+s_0)}\varepsilon_{SSl}(1-\varepsilon_{SSl})
            \log\left(\dfrac{k_1K_M}{\varepsilon_{SSl}k_2}\right).
\end{equation}
\end{proposition}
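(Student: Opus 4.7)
\textbf{Proof plan for Proposition~\ref{initlosslowprop}.}

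The first inequality is a direct consequence of the upper substrate bound \eqref{notsosimplesest} in Lemma~\ref{simplesestlem}, evaluated at $t=t_\ell^\dagger$. The only real work is recognizing that the argument of the exponential simplifies nicely. Using $k_{-1}+k_2=k_1K_M$ and $k_1e_0\,t_{SSl}=\varepsilon_{SSl}$, one computes
\[
\frac{k_1e_0K_M}{K_M+s_0}\cdot t_\ell^\dagger
=\frac{\varepsilon_{SSl}K_M}{K_M+s_0}\,(1-\varepsilon_{SSl})\,
   \log\!\left(\frac{k_1K_M}{k_2\,\varepsilon_{SSl}}\right),
\]
from which the first claim follows by direct substitution. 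This step is routine bookkeeping.

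For the lower estimate on depletion, I would first invoke monotonicity of $s(t)$ (see Lemma~\ref{schahelem}) together with $t_\ell^\dagger\leq t_\ell\leq t_{\rm cross}$ (by construction of $t_\ell^\dagger$ in \eqref{tlowdag} and Lemma~\ref{lowertimelem}) to conclude $s_{\rm cross}\leq s(t_\ell^\dagger)$, which gives the first inequality in \eqref{lowscr}. Rearranging the first part of the proposition then yields
\[
\frac{s_0-s(t_\ell^\dagger)}{s_0}\;\geq\;\frac{k_2}{k_{-1}+k_2}\bigl(1-e^{-\alpha}\bigr),
\qquad
\alpha:=\frac{\varepsilon_{SSl}K_M}{K_M+s_0}(1-\varepsilon_{SSl})\log\!\left(\tfrac{k_1K_M}{k_2\varepsilon_{SSl}}\right).
\]

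The last ingredient is the elementary inequality $1-e^{-\alpha}\geq \alpha/2$ valid for $0\leq\alpha\leq 1$ (a two-term Taylor bound, since $e^{-\alpha}\leq 1-\alpha+\alpha^2/2$ for $\alpha\geq 0$). Condition \eqref{lowertimecond} is exactly what is needed to guarantee applicability: it forces
\[
\alpha\;\leq\;\frac{K_M}{K_M+s_0}(1-\varepsilon_{SSl})\cdot\varepsilon_{SSl}\log\!\left(\tfrac{k_1K_M}{k_2\varepsilon_{SSl}}\right)\;<\;1.
\]
Combining $1-e^{-\alpha}\geq \alpha/2$ with the prefactor $\tfrac{k_2}{k_{-1}+k_2}=\tfrac{k_2}{k_1K_M}$ and simplifying produces exactly the right-hand side of \eqref{lowscr}.

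The only subtle point is pinning down why condition \eqref{lowertimecond} is the ``right'' smallness hypothesis: it is precisely what is needed to keep $\alpha$ in the regime where the linear lower bound $1-e^{-\alpha}\geq\alpha/2$ holds, so that the logarithmic factor survives in the final estimate rather than being wasted inside an already-saturated exponential. Everything else is bookkeeping using $k_{-1}+k_2=k_1K_M$ and $\varepsilon_{SSl}=e_0/(K_M+s_0)$, together with Lemmas~\ref{schahelem}, \ref{simplesestlem}, and \ref{lowertimelem} already established.
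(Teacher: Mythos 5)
Your proposal is correct and follows essentially the same route as the paper: substitute $t_\ell^\dagger$ into \eqref{notsosimplesest} for the first claim, use monotonicity of $s$ together with $t_\ell^\dagger\leq t_\ell\leq t_{\rm cross}$ for the first inequality in \eqref{lowscr}, and then apply the two-term Taylor bound $\exp(-\alpha)\leq 1-\alpha+\alpha^2/2\leq 1-\alpha/2$ for $\alpha\leq 1$, which is exactly the paper's argument. The bookkeeping with $k_{-1}+k_2=k_1K_M$ and the identification of $\alpha$ also match the paper's computation.
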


\begin{proof}
The estimate for $s(t_\ell^\dagger)$ is obtained by substitution of $t_\ell^\dagger$ in 
\eqref{notsosimplesest}. As for \eqref{lowscr}, the first  inequality holds because $s$ is 
decreasing with $t$. Then one directly obtains
\begin{equation*}
    \dfrac{s_0-s(t_\ell^\dagger)}{s_0}\geq \dfrac{k_{2}}{k_{-1}+k_2}\left(1-\exp(-\alpha)\right)
\end{equation*}
with
\begin{equation*}
    \alpha:=\dfrac{\varepsilon_{SSl}K_M}{K_M+s_0}(1-\varepsilon_{SSl})\cdot 
            \log\left(\dfrac{k_1K_M}{\varepsilon_{SSl}k_2}\right)<\varepsilon_{SSl}\cdot 
                \log\left(\dfrac{k_1K_M}{\varepsilon_{SSl}k_2}\right).
\end{equation*}
Condition~\eqref{lowertimecond} implies that $\alpha<1$, and the exponential series and 
the Leibniz criterion show that
\begin{equation*}
    \exp(-\alpha)\leq 1-\alpha+\alpha^2/2\leq 1-\alpha/2.
\end{equation*}
This estimate yields the second assertion.
\end{proof}

\begin{remark}\label{badsellrem}{\em 
The estimate in {\bf Proposition~\ref{initlosslowprop}} can be improved, subject to more restrictive 
assumptions on $\varepsilon_{SSl}$. Replacing~\eqref{lowertimecond} by
\begin{equation}\label{lowertimecondsharp}
    \varepsilon_{SSl}\cdot \log\left(\dfrac{k_1K_M}{\varepsilon_{SSl}k_2}\right)<r
\end{equation}
for some $r,\,0<r\leq 1$, it is straightforward to see that 
\begin{equation}\label{lowscrsharp}
\dfrac{s_0-s_{\rm cross}}{s_0}\geq(1-r/2)\,\dfrac{k_2}{k_1}
    \dfrac{\varepsilon_{SSl}}{K_M+s_0}(1-\varepsilon_{SSl})
        \log\left(\dfrac{k_1K_M}{\varepsilon_{SSl}k_2}\right)
\end{equation}
in this case. This suggests a simplified asymptotic estimate for $s_{\rm cross}$ by setting, 
for instance, $r=\sqrt{\varepsilon_{SSl}}$ and keeping only lowest order terms,
\begin{equation*}
    \dfrac{s_0-s_{\rm cross}}{s_0}\geq\dfrac{k_2}{k_1(K_M+s_0)}\varepsilon_{SSl}
        \left[\log\left(\dfrac{1}{\varepsilon_{SSl}}\right)+\log\left(\dfrac{k_1K_M}{k_2}\right)\right]+\cdots
\end{equation*}
In comparison, the asymptotic expansion of the right-hand side of \eqref{lowscr} starts with
\begin{equation}\label{lowscras}
  \dfrac{k_2}{2k_1(K_M+s_0)}\varepsilon_{SSl}(1-\varepsilon_{SSl})
    \log\left(\dfrac{k_1K_M}{\varepsilon_{SSl}k_2}\right)\sim \dfrac{k_2}{2k_1(K_M+s_0)}
        \varepsilon_{SSl}\left[\log\dfrac{1}{\varepsilon_{SSl}}+\log\dfrac{k_1K_M}{k_2}
  \right]+\cdots
\end{equation}
It turns out below that the (removable) factor $\cfrac12$ is less problematic for estimates 
than the factor $\cfrac{k_2}{k_1(K_M+s_0)}$.}
\end{remark}

\begin{remark}\label{nossltime}{\em 
\textcolor{black}{Thus, in the case of small $\varepsilon_{SSl}$,} for $(s_0-s_{\rm cross})/s_0$ 
one has a lower estimate by an expression asymptotic to
$\varepsilon_{SSl}\log(1/\varepsilon_{SSl})$. \textcolor{black}{Notably, this estimate indicates 
that the relative substrate depletion at crossing time is not of order $\varepsilon_{SSl}$. The widely 
held assumption in the literature (see, e.g. Segel \& Slemrod~\cite{SSl}) about negligibility 
of the substrate depletion in the pre-QSS phase should be seen from this perspective.}

Moreover, upon replacing $t_{\rm cross}$ by a differently chosen onset time $t_{ons}$ as 
in \eqref{tonsdef}, the argument in the proof of the Proposition, with 
{\bf Lemma~\ref{simplesestlem}} shows that
\begin{equation}
\dfrac{s_0-s(t_{ons})}{s_0}\geq\dfrac{k_2}{2k_1(K_M+s_0)}\varepsilon_{SSl}
    \log\left(\dfrac{M^*}{\varepsilon_{SSl}}\right)+\cdots
\end{equation}
whenever $\varepsilon_{SSl}\log\left(\dfrac{M^*}{\varepsilon_{SSl}}\right)<1$. Thus, the lowest 
order of the asymptotic expansion remains unchanged.}
\end{remark}

We now turn to upper bounds for $t_{\rm cross}$. \textcolor{black}{For technical reasons, since our 
argument requires a positive lower estimate for $g(s)$, we fix an auxiliary constant $0<q<1$ and 
consider the interval $[q\,s_0,\,s_0]$.} Then,
\begin{equation*}
   g(s)\geq \dfrac{qe_0s_0}{K_M+qs_0}\text{  and  } g'(s)\geq \dfrac{e_0 K_M}{(K_M+s_0)^2} 
   \quad\text{for}\quad qs_0\leq s\leq s_0.
\end{equation*}
Therefore,
\begin{equation*}
    A\geq k_1(qs_0+K_M)=:A_*,
\end{equation*}
and 
\begin{equation*}
    B\geq k_2\dfrac{qe_0s_0}{K_M+qs_0}\cdot\dfrac{e_0 K_M}{(K_M+s_0)^2}=:B_*
\end{equation*}
when $qs_0\leq s\leq s_0$.

Hence, for $0\leq t\leq t_{\rm cross}$ and $s(t)\geq qs_0$, one has
\begin{equation}
    \dfrac{dL}{dt}\geq -A_*L+B_*,
\end{equation}
and defining $L_*$ by 
\begin{equation*}
    \dfrac{dL_*}{dt}=-A_*L_*+B_*,\quad L_*(0)=-g(s_0),
\end{equation*}
the usual differential inequality argument shows $L\geq L_*$. Explicitly,
\begin{equation*}
    L_*=-\left(\dfrac{B_*}{A_*}+g(s_0)\right)\exp(-A_*t)+ \dfrac{B_*}{A_*}.
\end{equation*}
Define $t_u=t_u(q)$ by $L_*(t_u(q))=0$, thus
\begin{equation}\label{tupstep0}
    t_u(q)=\dfrac{1}{k_1(K_M+qs_0)}\,\log\left(1+C(q)\cdot\dfrac{1}{\varepsilon_{SSl}}\right),
    \quad \text{with} \quad C=C(q):=\dfrac{1}{q}\cdot\dfrac{(k_{-1}+k_2+qk_1s_0)^2}{k_2(k_{-1}+k_2)}.
\end{equation}
With the inequality
\begin{equation}\label{tupsteppoint5}
\dfrac1q<C(q)<\dfrac{C^*}{q},\quad C^*:=C(1)
    =\dfrac{(k_{-1}+k_2+k_1s_0)^2}{k_2(k_{-1}+k_2)}=\dfrac{k_1(K_M+s_0)^2}{k_2K_M},
\end{equation}
we obtain a more convenient estimate for $t_u(q)$:
\begin{equation*}
    \begin{array}{rcl}
      t_u(q)&=   & t_{SSl}\dfrac{K_M+s_0}{K_M+qs_0}\log\left(1+\dfrac{C(q)}{\varepsilon_{SSl}}\right) \\
         & \leq& t_{SSl}\dfrac{1}{q}\,\log\left(1+\dfrac{1}{\varepsilon_{SSl}}\dfrac{C^*}{q}\right).
    \end{array}
\end{equation*}
This gives rise to the upper estimate
\begin{equation}\label{tupdagger}
    t_u^\dagger(q):=t_{SSl}\dfrac{1}{q}\,\log\left(1+\dfrac{1}{\varepsilon_{SSl}}\dfrac{C^*}{q}\right)\geq t_u(q).
\end{equation}
For later use, we note
\begin{equation*}
\log\left(1+\dfrac{1}{\varepsilon_{SSl}}\dfrac{C^*}{q}\right)=\dfrac1q 
    \log\dfrac{1}{\varepsilon_{SSl}}+\log\dfrac{C^*}{q}+\log\left(1+\dfrac{q\varepsilon_{SSl}}{C^*}\right)
\end{equation*}
and obtain \textcolor{black}{in the limit $\varepsilon_{SSl}\to 0$} the asymptotic expansion
\begin{equation}\label{tupdaggeras}
t_u^\dagger(q)= t_{SSl}\dfrac{1}{q}\,\log\left(1+\dfrac{1}{\varepsilon_{SSl}}\dfrac{C^*}{q}\right)
    \sim \frac1qt_{SSl}\left[\log\dfrac{1}{\varepsilon_{SSl}}+\log\dfrac{C^*}{q}+o(1)\right].
\end{equation}

Equation~\eqref{tupstep0} provides an upper estimate for the crossing time, subject to an 
additional condition:
\begin{lemma}\label{uppertimelem}
Given $0<q<1$, assume that the solution of \eqref{eqmmirrev}, with initial value $(s_0,0)$, 
satisfies $s(t_u(q))\geq qs_0$. Then, $t_{\rm cross}\leq t_u(q)\leq t_u^\dagger(q)$.
\end{lemma}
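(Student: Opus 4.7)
The plan is to run a one-sided comparison argument on the interval $[0,t_u(q)]$, combined with the contradiction technique already used to prove Lemma~\ref{lowertimelem}.

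First, I would exploit the monotonicity of $s$ furnished by Lemma~\ref{schahelem}: because $\dot s<0$ throughout, the hypothesis $s(t_u(q))\geq qs_0$ immediately gives $s(t)\geq qs_0$ for every $t\in[0,t_u(q)]$. This is the key observation that licenses the differential inequality $\dot L\geq -A_*L+B_*$ on the entire interval $[0,\min\{t_{\rm cross},t_u(q)\}]$, where the restriction $s\geq qs_0$ was needed for the bounds $A\geq A_*$ and $B\geq B_*$ from the preceding estimates.

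Next, I would argue by contradiction. Suppose $t_{\rm cross}>t_u(q)$. Then $L(t)<0$ on all of $[0,t_u(q)]$, and by the previous paragraph the inequality $\dot L\geq -A_*L+B_*$ is valid throughout this interval. Since $L(0)=-g(s_0)=L_*(0)$, the standard comparison theorem for first-order differential inequalities (as in Walter~\cite[\S9]{walter}, already invoked in the text) yields $L(t)\geq L_*(t)$ for all $t\in[0,t_u(q)]$. In particular $L(t_u(q))\geq L_*(t_u(q))=0$, contradicting $L(t_u(q))<0$. Hence $t_{\rm cross}\leq t_u(q)$.

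The second inequality $t_u(q)\leq t_u^\dagger(q)$ is not a dynamical statement at all: it is the purely algebraic estimate already derived in \eqref{tupsteppoint5}--\eqref{tupdagger}, using $C(q)\leq C^*/q$ and $(K_M+s_0)/(K_M+qs_0)\leq 1/q$ inside the formula for $t_u(q)$. There is no real obstacle in the proof; the only subtle point is the interplay between the auxiliary constant $q$ and the a priori assumption $s(t_u(q))\geq qs_0$ --- without it, the lower bound $A\geq A_*$ can fail on part of the interval and the comparison collapses. This is precisely why the hypothesis must be included in the statement rather than derived from the dynamics directly.
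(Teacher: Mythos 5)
Your proof is correct and follows essentially the same route as the paper: a contradiction argument based on the comparison $L\geq L_*$ established just before the lemma, together with the purely algebraic bound $t_u(q)\leq t_u^\dagger(q)$ from \eqref{tupsteppoint5}--\eqref{tupdagger}. The only (harmless) difference is that you evaluate the comparison at $t_u(q)$, where the hypothesis $s\geq qs_0$ certainly holds, whereas the paper evaluates at $t_{\rm cross}$; your choice is if anything slightly cleaner.
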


\begin{proof}
Assume that $t_{\rm cross}>t_u(q)$, then $L_*(t_{\rm cross})>0$ and consequently 
$L(t_{\rm cross})>0$; a contradiction.
\end{proof}
\textcolor{black}{As will be seen below, the condition imposed in {\bf Lemma~\ref{uppertimelem}} will 
imply restrictions on $\varepsilon_{SSl}$.}

Modulo the hypothesis of {\bf Lemma~\ref{uppertimelem}}, we get an upper estimate for $t_{\rm cross}$ 
which is asymptotic to $\log(1/\varepsilon_{SSl})$, and complements the lower estimate $t_\ell$ 
with the same asymptotics. This clarifies the asymptotic behavior of $t_{\rm cross}$ as 
$\varepsilon_{SSl}\to 0$.

Still, criteria are needed to satisfy the hypothesis of the Lemma. The first step to obtain such 
criteria is to apply {\bf Lemma \ref{simplesestlem}} for $t=t_u^\dagger(q)$. By straightforward 
calculations, one finds the first estimate in the following proposition:
\begin{proposition}\label{stuplem}
One has
\begin{equation}\label{stuqupper}
\dfrac{s(t_u^\dagger(q))}{s_0}\geq \exp\left(-\dfrac{1}{q}\varepsilon_{SSl}
    \log\left(1+\dfrac{1}{\varepsilon_{SSl}}\cdot\dfrac{C^*}{q}\right)\right).
\end{equation}
Moreover, when 
\begin{equation*}
\varepsilon_{SSl}\log\left(1+\dfrac{1}{\varepsilon_{SSl}}\cdot\dfrac{C^*}{q}\right)<q
\end{equation*}
then
\begin{equation}\label{redinitvalest}
\dfrac{s_0-s_{\rm cross}}{s_0}\leq\dfrac{s_0-s(t_u^\dagger(q))}{s_0}
    \leq \dfrac{1}{q}\varepsilon_{SSl}\log\left(1+\dfrac{C^*}{q}\cdot\dfrac{1}{\varepsilon_{SSl}}\right).
\end{equation}
\end{proposition}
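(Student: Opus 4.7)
The plan is to evaluate the elementary lower bound $s(t)\geq s_0\exp(-k_1e_0t)$ of \textbf{Lemma~\ref{simplesestlem}} at $t=t_u^\dagger(q)$. Using $k_1e_0 t_{SSl}=e_0/(K_M+s_0)=\varepsilon_{SSl}$, substitution of \eqref{tupdagger} into the bound yields an exponent of exactly $-q^{-1}\varepsilon_{SSl}\log(1+C^*/(q\varepsilon_{SSl}))$, which is precisely \eqref{stuqupper}. For the remainder of the proof I would write $\alpha:=q^{-1}\varepsilon_{SSl}\log(1+C^*/(q\varepsilon_{SSl}))$ to keep notation compact; the first assertion then reads $s(t_u^\dagger(q))/s_0\geq e^{-\alpha}$.

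For the chain \eqref{redinitvalest}, I would split it into its two inequalities. The right-hand inequality is elementary: from \eqref{stuqupper} one has $s_0-s(t_u^\dagger(q))\leq s_0(1-e^{-\alpha})$, and the standard estimate $1-e^{-x}\leq x$ for $x\geq 0$ delivers the bound by $s_0\alpha$, which is the stated right-hand side. The middle inequality $s_{\rm cross}\geq s(t_u^\dagger(q))$ would follow by combining the monotonicity of $s$ (\textbf{Lemma~\ref{schahelem}}) with $t_{\rm cross}\leq t_u^\dagger(q)$; since $t_u(q)\leq t_u^\dagger(q)$ by \eqref{tupdagger}, it suffices to apply \textbf{Lemma~\ref{uppertimelem}} with the given $q$.

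The verification of the hypothesis $s(t_u(q))\geq qs_0$ of \textbf{Lemma~\ref{uppertimelem}} is the real work. My plan is to apply \textbf{Lemma~\ref{simplesestlem}} again, now at $t=t_u(q)$, together with the bookkeeping bounds $(K_M+s_0)/(K_M+qs_0)\leq 1/q$ and $C(q)\leq C^*/q$ from \eqref{tupsteppoint5}, which together give $k_1e_0 t_u(q)\leq \alpha$ and hence $s(t_u(q))/s_0\geq e^{-\alpha}$. The hypothesis $\varepsilon_{SSl}\log(1+C^*/(q\varepsilon_{SSl}))<q$ forces $\alpha<1$, and in the intended regime of moderately small $q$ (e.g.\ $q\leq e^{-1}$) this yields $e^{-\alpha}>q$, as needed.

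The main obstacle is precisely this last implication: converting $\alpha<1$ into $e^{-\alpha}\geq q$. For small $q$ it is automatic, but for $q$ close to $1$ a complementary argument is required. The cleanest fallback is by contradiction: if $s(t_u(q))<qs_0$, then the first time $t^*$ at which $s(t^*)=qs_0$ satisfies $t^*<t_u(q)$, and applying \textbf{Lemma~\ref{simplesestlem}} at $t^*$ yields $\log(1/q)\leq k_1e_0 t^*\leq \alpha$, contradicting the hypothesis once $q$ is not too close to $1$. Carefully tracking the admissible range of $q$ is therefore the step that absorbs most of the attention, while the rest of the argument is a direct substitution and the standard Taylor bound on $1-e^{-x}$.
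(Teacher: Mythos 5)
Your treatment of \eqref{stuqupper} and of the right-hand inequality in \eqref{redinitvalest} is exactly the paper's: evaluate the bound $s(t)\geq s_0\exp(-k_1e_0t)$ of \textbf{Lemma~\ref{simplesestlem}} at $t=t_u^\dagger(q)$, using $k_1e_0t_{SSl}=\varepsilon_{SSl}$, and then apply $1-e^{-\gamma}\leq \gamma$ (the paper phrases this as $e^{-\gamma}>1-\gamma$ via the exponential series and the Leibniz criterion, which is where the hypothesis $\gamma<1$ enters). On these two points there is nothing to add.

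Where you diverge is the middle inequality $s_{\rm cross}\geq s(t_u^\dagger(q))$, and there you are trying to prove more than the proposition's hypothesis can deliver. You attempt to derive $s(t_u(q))\geq qs_0$ --- hence $t_{\rm cross}\leq t_u(q)\leq t_u^\dagger(q)$ via \textbf{Lemma~\ref{uppertimelem}} --- from the stated condition $\varepsilon_{SSl}\log(1+C^*/(q\varepsilon_{SSl}))<q$. As you correctly observe, what is actually needed is $e^{-\alpha}\geq q$, i.e.
\begin{equation*}
\varepsilon_{SSl}\log\left(1+\dfrac{C^*}{q}\cdot\dfrac{1}{\varepsilon_{SSl}}\right)\leq q\log\dfrac{1}{q},
\end{equation*}
which for $q>e^{-1}$ is strictly stronger than the stated hypothesis (since then $q\log(1/q)<q$), and dramatically so as $q\to 1$ --- precisely the regime the paper cares about. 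Your contradiction fallback runs into the same wall: it again produces $\log(1/q)<\alpha$, which is only contradictory under that same stronger condition. The paper does not attempt this derivation inside Proposition~\ref{stuplem} at all: the first inequality of \eqref{redinitvalest} is dispatched as ``monotonicity of $t\mapsto s(t)$,'' i.e.\ it is read conditionally on $t_{\rm cross}\leq t_u^\dagger(q)$, and the actual verification of $s(t_u^\dagger(q))\geq qs_0$ is the content of the \emph{subsequent} \textbf{Proposition~\ref{uppertimeprop}}, which imposes exactly the requirement \eqref{tustep2point5} above and reduces it to explicit bounds such as $\varepsilon_{SSl}\leq\frac{9}{16}\left(q\log(1/q)\right)^2$. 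So your diagnosis of the obstruction is right, but the cure is not a cleverer argument under the given hypothesis; it is to recognize that the condition $\varepsilon_{SSl}\log(1+C^*/(q\varepsilon_{SSl}))<q$ serves only to make $\gamma<1$ so the Taylor bound applies, while the control of the crossing time is secured separately and under more restrictive assumptions on $\varepsilon_{SSl}$.
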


\begin{proof}
There remains estimate \eqref{redinitvalest}. The first inequality follows from monotonicity 
of $t\mapsto s(t)$. When the stated condition holds then 
\begin{equation*}
    \gamma:=\dfrac{1}{q} \varepsilon_{SSl}\log\left(1+\dfrac{1}{\varepsilon_{SSl}}
        \cdot\dfrac{C^*}{q}\right)<1,
\end{equation*}
and therefore $\exp(-\gamma)>1-\gamma$ by the exponential series and the Leibniz criterion. 
Substitution yields the assertion.
\end{proof}

Analogous to the derivation of expansion \eqref{tupdaggeras} \textcolor{black}{in the limiting 
case $\varepsilon_{SSl}\to 0$} one obtains an expansion of the 
right-hand side of equation \eqref{redinitvalest}, up to terms of order $o(\varepsilon_{SSl})$:
\begin{equation}\label{redinitvalestas}
    \dfrac{1}{q}\varepsilon_{SSl}\log\left(1+\dfrac{1}{\varepsilon_{SSl}}
        \cdot\dfrac{C^*}{q}\right)\sim \dfrac1q\left[\varepsilon_{SSl}
            \log\dfrac{1}{\varepsilon_{SSl}}+\varepsilon_{SSl}\log\dfrac{C^*}{q}+\cdots
    \right].
\end{equation}
 Equation~\eqref{stuqupper}, in view of $\lim_{x\to 0+}x\log(1/x)=0$, shows that for any fixed 
 $q$ the condition $s(t_u^\dagger(q))\geq qs_0$ holds for sufficiently small $\varepsilon_{SSl}$.
 
There remains to determine usable explicit bounds for $\varepsilon_{SSl}$ for given $q$.  We 
aim here at providing simple workable, rather than optimal, conditions:
\begin{proposition}\label{uppertimeprop} 
Let $q\geq \frac12$, such that
\begin{equation*}
    4q\log(1/q)\cdot\log(4C^*)<1. 
\end{equation*}
 Assume that
 \begin{equation*}
     \varepsilon_{SSl}< \exp(-1) \quad {\rm and}\quad 
        \varepsilon_{SSl}\leq\frac{9}{16}\left(q\log(1/q)\right)^2.
 \end{equation*}
Then, $s(t_u^\dagger(q))\geq q s_0$ and consequently $t_{\rm cross}\leq t_u^\dagger(q)$.
\end{proposition}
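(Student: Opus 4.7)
My plan is to apply \textbf{Proposition \ref{stuplem}} to reduce the substrate bound $s(t_u^\dagger(q)) \geq qs_0$ to a single scalar inequality in $\varepsilon_{SSl}$, and then verify this inequality by splitting the logarithm and using each of the three numerical hypotheses to control one piece. Once $s(t_u^\dagger(q)) \geq qs_0$ is established, \textbf{Lemma \ref{uppertimelem}} delivers the conclusion $t_{\rm cross} \leq t_u^\dagger(q)$ for free, so essentially all work lies in the first claim.

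First, inequality \eqref{stuqupper} gives $s(t_u^\dagger(q))/s_0 \geq \exp(-\gamma)$ with $\gamma := (\varepsilon_{SSl}/q)\log\bigl(1+C^*/(q\varepsilon_{SSl})\bigr)$. Hence $s(t_u^\dagger(q)) \geq qs_0$ is equivalent to $\gamma \leq \log(1/q)$, that is, to
\begin{equation*}
    \varepsilon_{SSl}\log\!\Big(1+\frac{C^*}{q\varepsilon_{SSl}}\Big) \leq q\log(1/q). \qquad (\ast)
\end{equation*}
I would then simplify the logarithm. The hypothesis $q\geq 1/2$ gives $2C^*/q \leq 4C^*$, and the smallness assumptions on $\varepsilon_{SSl}$ together with $q\leq 1$ imply $C^*/(q\varepsilon_{SSl})\geq 1$, so the elementary bound $\log(1+x)\leq\log(2x)$ for $x\geq 1$ applies, yielding
\begin{equation*}
    \log\!\Big(1+\frac{C^*}{q\varepsilon_{SSl}}\Big) \leq \log(4C^*) + \log(1/\varepsilon_{SSl}).
\end{equation*}
Thus it suffices to establish $\varepsilon_{SSl}\log(4C^*) + \varepsilon_{SSl}\log(1/\varepsilon_{SSl}) \leq q\log(1/q)$.

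I would bound the two summands separately. For the first, combining $\varepsilon_{SSl} \leq \tfrac{9}{16}(q\log(1/q))^2$ with $4q\log(1/q)\log(4C^*) < 1$ gives
\begin{equation*}
    \varepsilon_{SSl}\log(4C^*) \leq \tfrac{9}{16}\bigl(q\log(1/q)\bigr)^{2}\log(4C^*) < \tfrac{9}{64}\, q\log(1/q).
\end{equation*}
For the second, I would invoke the elementary calculus inequality $\sqrt{x}\log(1/x)\leq 2/e$ on $(0,1)$ (attained at $x=e^{-2}$), together with the square-root form $\sqrt{\varepsilon_{SSl}}\leq\tfrac{3}{4}q\log(1/q)$ of the second hypothesis, to get
\begin{equation*}
    \varepsilon_{SSl}\log(1/\varepsilon_{SSl}) = \sqrt{\varepsilon_{SSl}}\cdot\bigl(\sqrt{\varepsilon_{SSl}}\log(1/\varepsilon_{SSl})\bigr) \leq \tfrac{3}{4}q\log(1/q)\cdot\tfrac{2}{e} = \tfrac{3}{2e}\, q\log(1/q).
\end{equation*}
Since $\tfrac{9}{64}+\tfrac{3}{2e}<0.7<1$, adding the two bounds yields $(\ast)$, whence $s(t_u^\dagger(q))\geq qs_0$ and \textbf{Lemma \ref{uppertimelem}} closes the argument.

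The main obstacle is the careful coordination of the three hypotheses to fit a fixed budget $q\log(1/q)$. The factor $\tfrac{9}{16}$ is precisely tuned so that, combined with the constant $2/e$ coming from $\sqrt{x}\log(1/x)\leq 2/e$, the $\varepsilon_{SSl}\log(1/\varepsilon_{SSl})$ piece consumes only a bounded fraction of $q\log(1/q)$; the condition $4q\log(1/q)\log(4C^*)<1$ then controls the remaining $\varepsilon_{SSl}\log(4C^*)$ piece, and $q\geq\tfrac12$ is used to absorb the factor $1/q$ into the clean constant $4$. The assumption $\varepsilon_{SSl}<e^{-1}$ plays an auxiliary role: it ensures $\log(1/\varepsilon_{SSl})>0$ and places $\varepsilon_{SSl}$ in the monotone range of $x\mapsto x\log(1/x)$, justifying the substitution-of-upper-bound step.
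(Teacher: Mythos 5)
Your proof is correct. The reduction is the same as the paper's: invoke Lemma~\ref{uppertimelem} and Proposition~\ref{stuplem} to reduce everything to the scalar inequality $\varepsilon_{SSl}\log\left(1+\tfrac{C^*}{q\varepsilon_{SSl}}\right)\leq q\log(1/q)$, and then split the logarithm (using $C^*\geq 1$, $\varepsilon_{SSl}<1$ and $q\geq\tfrac12$) into $\log(1/\varepsilon_{SSl})+\log(4C^*)$. Where you genuinely diverge is in verifying the resulting inequality $\varepsilon_{SSl}\log(1/\varepsilon_{SSl})+\varepsilon_{SSl}\log(4C^*)\leq q\log(1/q)$: the paper replaces $\varepsilon_{SSl}\log(1/\varepsilon_{SSl})$ by $\sqrt{\varepsilon_{SSl}}$, solves the quadratic $A\theta^2+\theta=B$ with $A=\log(4C^*)$, $B=q\log(1/q)$, and uses a Taylor/Leibniz estimate on $\sqrt{1+4AB}$ under $AB<\tfrac14$ to conclude that $\varepsilon_{SSl}\leq(\tfrac34 B)^2$ suffices. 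You instead budget the two summands separately: the hypothesis $4q\log(1/q)\log(4C^*)<1$ caps the $\log(4C^*)$ term at $\tfrac{9}{64}\,q\log(1/q)$, and the sharp bound $\sqrt{x}\log(1/x)\leq 2/e$ caps the other term at $\tfrac{3}{2e}\,q\log(1/q)$, with $\tfrac{9}{64}+\tfrac{3}{2e}<1$. Your version is slightly cleaner (no quadratic formula, no series expansion) and even shows there is some slack in the constants; the paper's version makes transparent where the tuned constant $\tfrac{9}{16}=(\tfrac34)^2$ comes from. One cosmetic remark: the condition $\gamma\leq\log(1/q)$ is \emph{sufficient} for $s(t_u^\dagger(q))\geq qs_0$ (via the one-sided bound \eqref{stuqupper}), not equivalent to it as you state; since you only use the sufficiency direction, this does not affect the argument.
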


\begin{proof}
By {\bf Lemma~\ref{uppertimelem}}, it is sufficient to prove the inequality 
$s(t_u^\dagger(q))\geq qs_0$. By \eqref{stuqupper}, this holds whenever
\begin{equation*}
    \exp\left(-\dfrac{1}{q}\varepsilon_{SSl}\log\left(1+\dfrac{C^*}{q}\cdot
        \dfrac{1}{\varepsilon_{SSl}}\right)\right)\geq q.
\end{equation*}
Equivalently
\begin{equation*}
    \dfrac{1}{q}\varepsilon_{SSl}\log\left(1+\dfrac{C^*}{q}
        \cdot\dfrac{1}{\varepsilon_{SSl}}\right)\leq \log\dfrac1q,
\end{equation*}
or
\begin{equation}\label{tustep2point5}
    \varepsilon_{SSl}\log\left(1+\dfrac{C^*}{q}
        \cdot\dfrac{1}{\varepsilon_{SSl}}\right)\leq q\log\dfrac1q.
\end{equation}
Rewrite the left hand side as
\begin{equation*}
\begin{array}{rcl}
\varepsilon_{SSl}\log\left(1+\dfrac{C^*}{q}\cdot\dfrac{1}{\varepsilon_{SSl}}\right) &=&
    \varepsilon_{SSl}\log\left(\dfrac{1}{\varepsilon_{SSl}}\left(\varepsilon_{SSl}
        +\dfrac{C^*}{q}\right)\right) \\
    &=&\varepsilon_{SSl}\log\left(\dfrac{1}
     {\varepsilon_{SSl}}\right)+\varepsilon_{SSl}\log\left(\varepsilon_{SSl}+\dfrac{C^*}{q}\right)\\
    &\leq&\varepsilon_{SSl}\log\left(\dfrac{1}
      {\varepsilon_{SSl}}\right)+\varepsilon_{SSl}\log\left(2\cdot\dfrac{C^*}{q}\right)\\
    &\leq&\varepsilon_{SSl}\log\left(\dfrac{1}{\varepsilon_{SSl}}\right)+\varepsilon_{SSl}\log(4C^*),\\
\end{array}
\end{equation*}
where we used $C^*\geq1>\varepsilon_{SSl}$ and $\frac12\leq q<1$. In view of 
$\varepsilon_{SSl}\log(1/\varepsilon_{SSl})\leq \sqrt{\varepsilon_{SSl}}$, the 
inequality~\eqref{tustep2point5} holds whenever
\begin{equation}\label{tustep3}
   \sqrt{\varepsilon_{SSl}} +\varepsilon_{SSl}\log(4 C^*)
    \leq q\log\dfrac{1}{q}.
\end{equation}
For the remainder of this proof, we abbreviate $A:=\log(4C^*)$ and $B:= q\log\dfrac{1}{q}$. 
Let $\theta$ be the positive number with $A\theta^2+\theta=B$. Then, for any 
$\varepsilon_{SSl}\leq \theta^2$, the inequality \eqref{tustep3} holds. Now, the solution of 
the above quadratic equation with $AB<1/4$, Taylor expansion and the Leibniz criterion show
\begin{equation*}
    \theta=\frac1{2A}\left(-1+\sqrt{1+4AB}\right)\geq\frac1{2A}
        \left(-1+ 1+\frac{4AB}{2}-\frac{16A^2B^2}{8}\right),
\end{equation*}
hence
\begin{equation*}
       \delta\geq B(1-AB)\geq \frac34 B.
\end{equation*}
Thus, inequality \eqref{tustep3} holds whenever $\varepsilon_{SSl}\leq (\frac34 B)^2$.
\end{proof}

The role of the constant $q$ is mostly auxiliary. It serves to ensure the applicability of 
{\bf Proposition~\ref{uppertimeprop}}, but actual estimates e.g.\ of $s(t_u^\dagger(q))$ will 
rely on {\bf Proposition~\ref{stuplem}}. 

\begin{example}{\em 
We consider one particular setting for the purpose of illustration. Assume that 
\begin{equation*}
    C^*=\dfrac{(k_{-1}+k_2+k_1s_0)^2}{k_2(k_{-1}+k_2)}\leq 250.
\end{equation*}
This condition covers a wide range of reaction parameters, for instance it is satisfied 
whenever $s_0\leq 5 K_M$ and $k_{-1}\leq 5 k_2$. Then, the requirement on $q$ in 
{\bf Proposition~\ref{uppertimeprop}} is satisfied whenever $q\geq 0.97$. For $q=0.97$.
one finds the condition $\varepsilon_{SSl}\leq 4.9\cdot 10^{-4}$.}
\end{example}

Rather than $t_u^\dagger(q)$, one may consider a slightly weaker, but more convenient estimate. 
Fix $\varepsilon_{SSl}$ such that $s(t_u^\dagger(q))\geq qs_0$. We will prove that the relative 
error upon replacing $t_u^\dagger(q)$ by
\begin{equation}\label{EST}
    t_u^\dagger(1)=\dfrac{1}{k_1(K_M+s_0)}\log\left(1+\dfrac{k_1(K_M+s_0)^3}{k_2K_Me_0}\right)
        =t_{SSl}\log\left(1+\dfrac{C^*}{\varepsilon_{SSl}}\right)
\end{equation}
is approximately equal to $(1-q)$ when $q$ approaches $1$.

\begin{lemma}\label{tuto1}
One has
\begin{equation*}
0\leq \dfrac{t_u^\dagger(q)-t_u^\dagger(1)}{t_u^\dagger(q)}\leq \dfrac{1-q}{q}\cdot
        \dfrac{1+\log(1+C^*/(q\,\varepsilon_{SSl}))}{\log(1+C^*/(q\,\varepsilon_{SSl}))}.
\end{equation*}
\end{lemma}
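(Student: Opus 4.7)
The plan is to introduce the shorthand $f(q) := \log\!\bigl(1 + C^*/(q\varepsilon_{SSl})\bigr)$, so that
\begin{equation*}
    t_u^\dagger(q) = \frac{t_{SSl}}{q}\,f(q), \qquad t_u^\dagger(1) = t_{SSl}\, f(1).
\end{equation*}
The function $q\mapsto f(q)$ is strictly decreasing on $(0,\infty)$, and $1/q\geq 1$ for $q\in(0,1]$, so both factors entering $t_u^\dagger(q)$ are at least their value at $q=1$; this already delivers the lower bound $t_u^\dagger(q)-t_u^\dagger(1)\geq 0$ for free.

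For the upper bound I would compute the ratio exactly rather than the difference. Dividing by $t_u^\dagger(q)=(t_{SSl}/q)f(q)$ gives the identity
\begin{equation*}
    \frac{t_u^\dagger(q)-t_u^\dagger(1)}{t_u^\dagger(q)} \;=\; 1 - \frac{q\,f(1)}{f(q)} \;=\; \frac{f(q)-q\,f(1)}{f(q)}.
\end{equation*}
The numerator splits as $f(q)-q\,f(1) = \bigl(f(q)-f(1)\bigr) + (1-q)\,f(1)$. The first summand is the key piece. Writing the quotient inside the logarithm out, one has
\begin{equation*}
    f(q)-f(1) \;=\; \log\!\left(1+\frac{(1-q)\,C^*}{q\,(\varepsilon_{SSl}+C^*)}\right),
\end{equation*}
and the elementary inequality $\log(1+x)\leq x$ together with $C^*\leq \varepsilon_{SSl}+C^*$ bounds this by $(1-q)/q$.

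Plugging the bound back in yields $f(q)-q\,f(1)\leq (1-q)/q + (1-q)f(1) = \frac{1-q}{q}\bigl(1 + q\,f(1)\bigr)$, and since $q\,f(1)\leq f(1)\leq f(q)$ the bracketed factor is at most $1+f(q)$. Combining these estimates produces exactly the claimed inequality. The proof is therefore essentially bookkeeping; the only mildly delicate step is recognizing that one should pass through the exact identity $1 - qf(1)/f(q)$, because attempting to bound $t_u^\dagger(q)-t_u^\dagger(1)$ directly via a single Taylor expansion obscures the clean separation between the $(1-q)/q$ prefactor and the logarithmic ratio that appears in the stated bound.
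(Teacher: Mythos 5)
Your proof is correct, but it follows a genuinely different route from the paper's. The paper sets $f(q):=\frac1q\log(1+A/q)$ with $A=C^*/\varepsilon_{SSl}$ (so the $1/q$ prefactor is absorbed into $f$), computes $f'(q)=-\frac1{q^2}\bigl(\log(1+A/q)+\frac{A}{A+q}\bigr)$, observes that $f'$ is increasing, and applies the mean value theorem together with $A/(A+q)<1$ to get $f(q)-f(1)\leq\frac{1-q}{q^2}(\log(1+A/q)+1)$, from which the claim follows on dividing by $f(q)$. You instead keep the prefactor outside, pass through the exact identity $1-qf(1)/f(q)=\bigl(f(q)-qf(1)\bigr)/f(q)$, split the numerator as $(f(q)-f(1))+(1-q)f(1)$, and control the first piece by the explicit computation $f(q)-f(1)=\log\bigl(1+\frac{(1-q)C^*}{q(\varepsilon_{SSl}+C^*)}\bigr)\leq\frac{1-q}{q}$ via $\log(1+x)\leq x$; the final step $1+qf(1)\leq 1+f(q)$ then reproduces the stated bound exactly. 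All of your intermediate identities and inequalities check out (including the algebra behind $f(q)-f(1)$ and the monotonicity claims). What your approach buys is that it is purely algebraic and avoids having to verify the monotonicity of $f'$, which the paper asserts but does not prove in detail; it also makes transparent exactly where the slack enters (two elementary logarithm estimates). What the paper's approach buys is brevity once the calculus facts are granted, and a derivative formula that could be reused for sharper or higher-order control of the $q$-dependence.
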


\begin{proof}
We abbreviate $A=C^*/\varepsilon_{SSl}$ and consider the function
\begin{equation*}
    q\mapsto f(q):=\frac1q\log(1+A/q),
\end{equation*}
noting $t_u^\dagger(q)=t_{SSl} f(q)$. The derivative
\begin{equation*}
    f'(q)=-\frac1{q^2}\left(\log(1+A/q)+\dfrac{A}{A+q}\right)
\end{equation*}
is an increasing function of $q$. By the mean value theorem, one has $f(q)-f(1)=(q-1)f'(q^*)$ 
for some $q^*$ between $q$ and $1$. Hence, by monotonicity and with $A/(A+q)<1$,
\begin{equation*}
    f(q)-f(1)\leq (1-q)\left| f'(q)\right|\leq \frac{1-q}{q^2}\left(\log(1+A/q)+1\right).
\end{equation*}
The assertion follows.
\end{proof}

We also note an asymptotic expansion \textcolor{black}{as $\varepsilon_{SSl}\to 0$}:
\begin{equation}\label{ESTasex}
\begin{array}{rcl}
t_u^\dagger(1)&\sim& t_{SSl}\left[ \log\dfrac{1}{\varepsilon_{SSl}}
    +\log\left(\dfrac{k_1K_M}{k_2}\left(\dfrac{K_M+s_0}{K_M}\right)^2\right)+o(1)
\right]\\
    & \sim& t_{SSl}\left[ \log\dfrac{1}{\varepsilon_{SSl}}
    +\log\left(\dfrac{k_{-1}+k_2}{k_2}\left(\dfrac{k_{-1}+k_2+k_1s_0}{k_{-1}+k_2}\right)^2\right)+o(1)
\right].
\end{array}
\end{equation}

The numerical simulations underlying {\sc Figure~\ref{FIG1}} illustrate that $t_u^\dagger(1)$ 
is a quite good approximation of the crossing time. 
\begin{figure}[htb!]
  \centering
    \includegraphics[width=8.0cm]{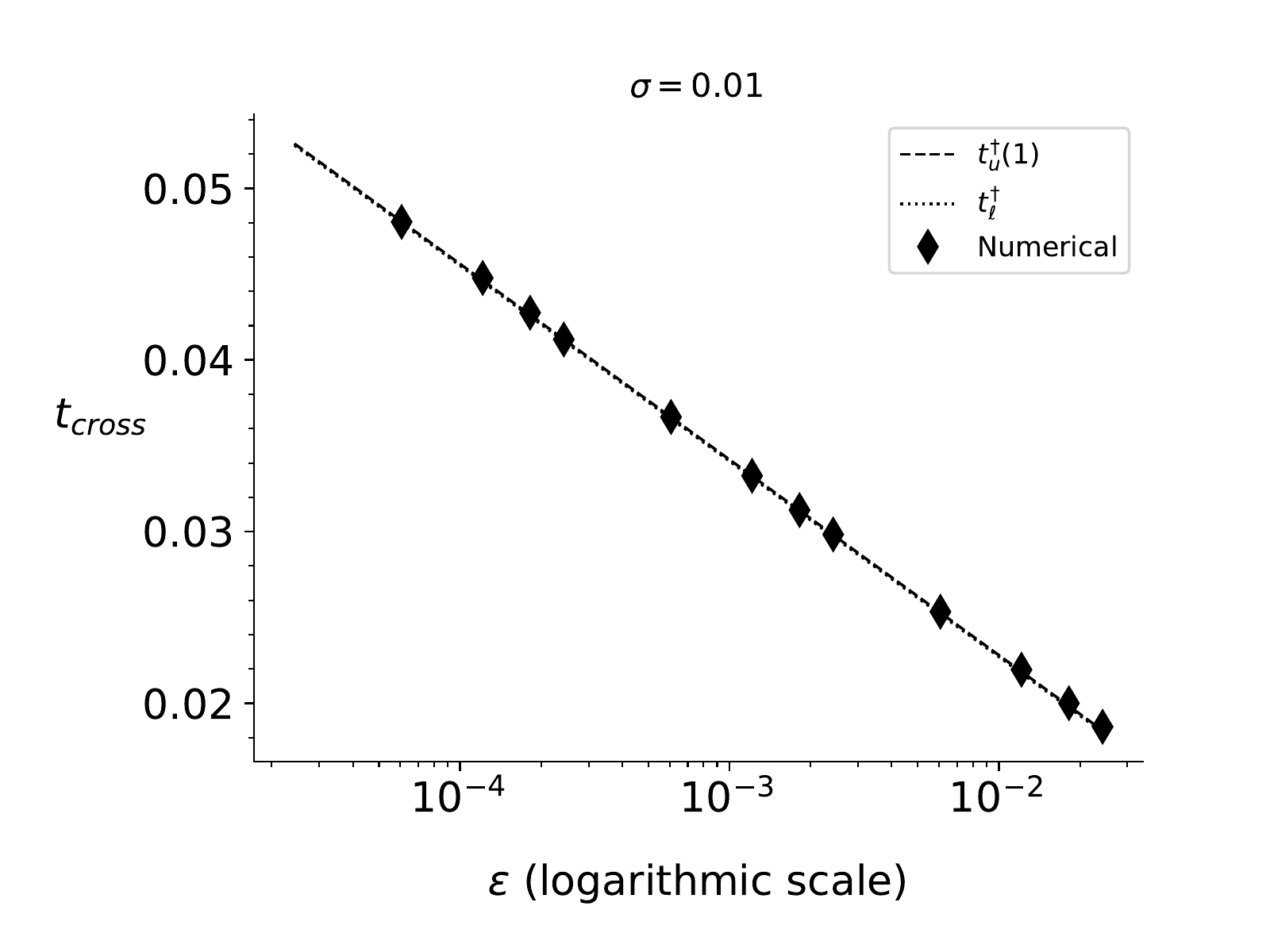}
    \includegraphics[width=8.0cm]{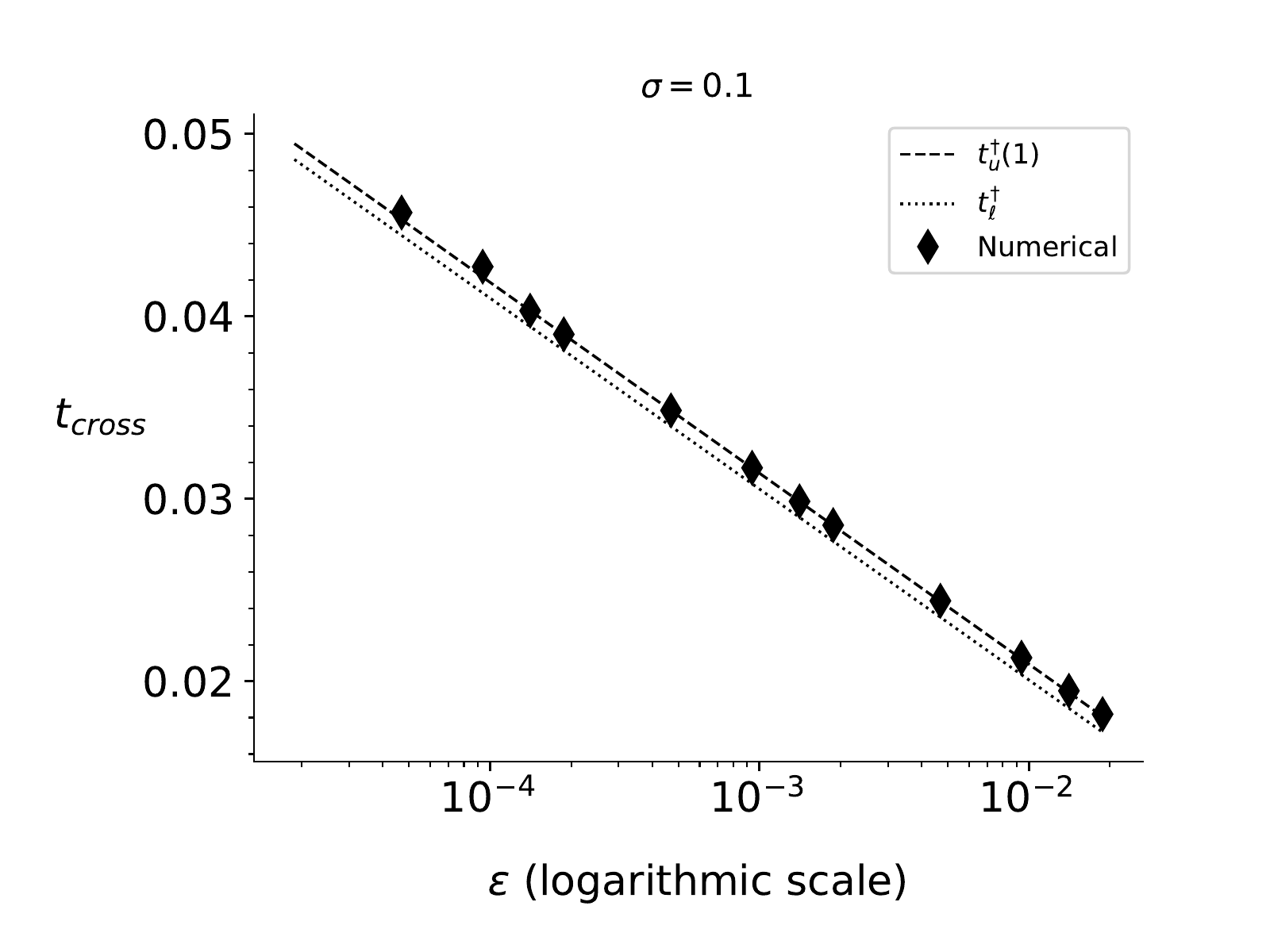}\\
    \includegraphics[width=8.0cm]{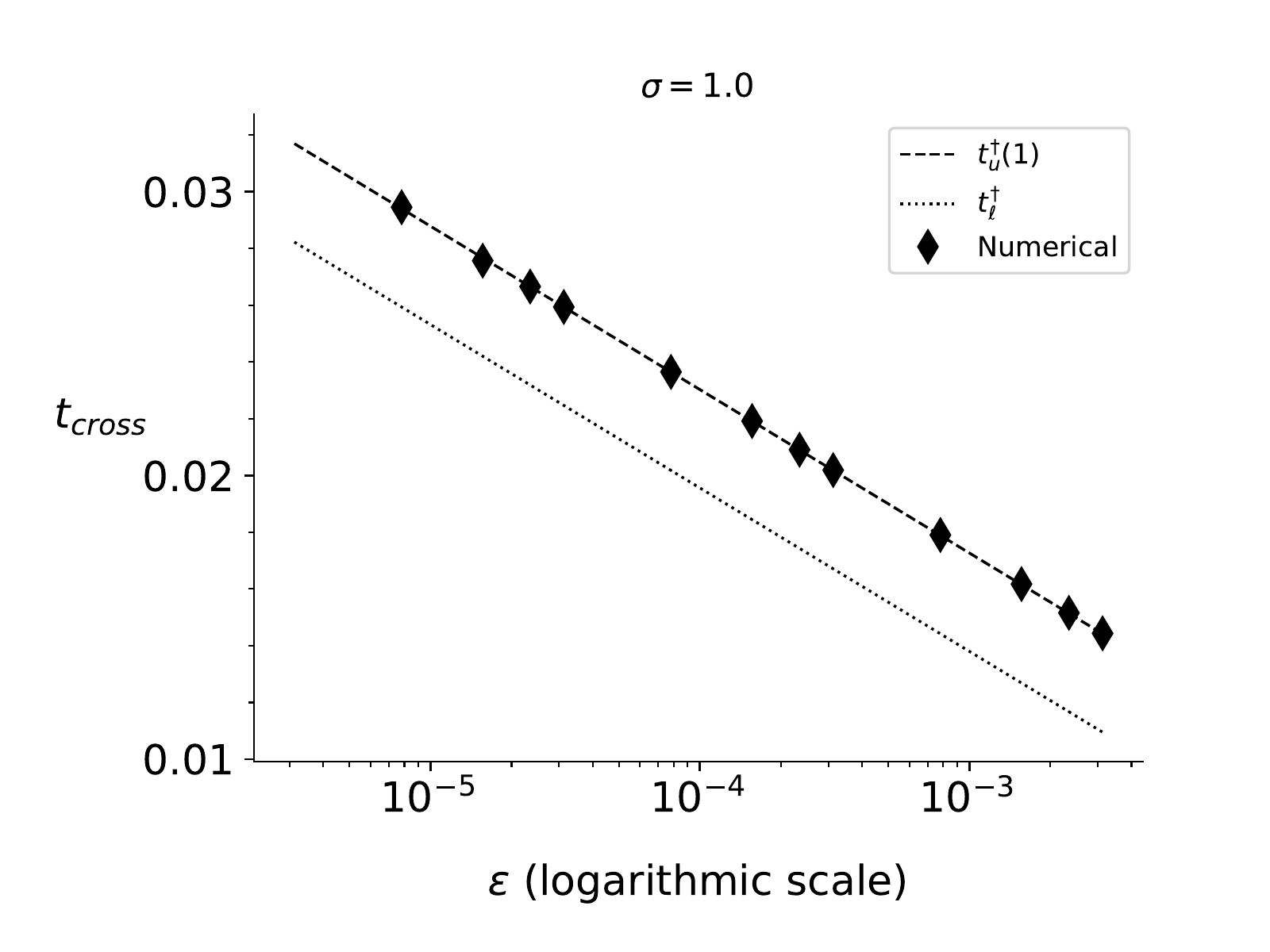}
    \includegraphics[width=8.0cm]{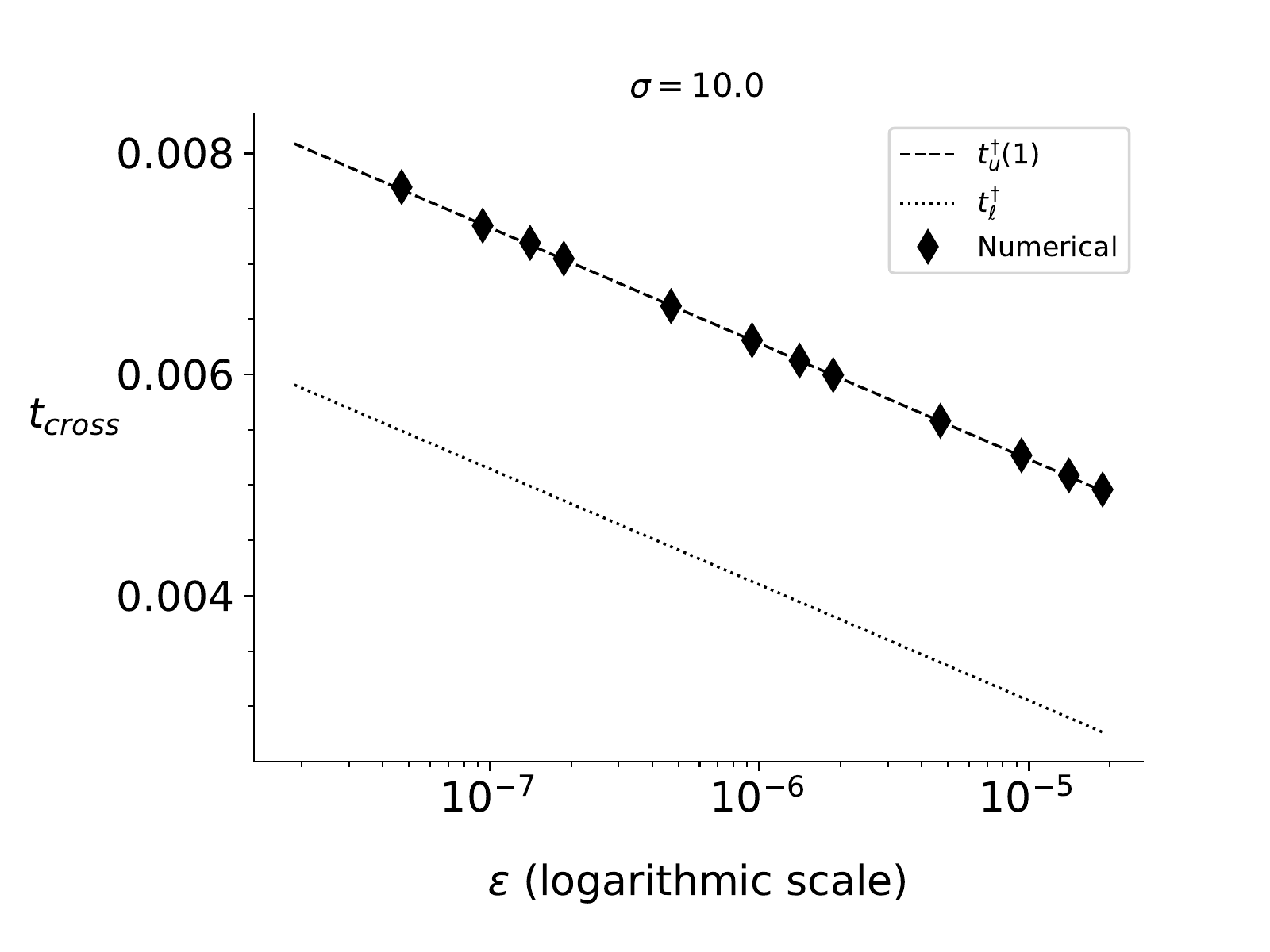}
\caption{\textbf{Numerical simulations indicate that $t_u^\dagger(1)$, defined in \eqref{EST}, 
is a reasonable estimation of $t_{\rm cross}$ when $e_0\ll K_M$.} In all panels, 
$e_0 \in [0.025, 0.05, 0.075, 0.1, 0.25, 0.5, 0.75, 1.0, 2.5, 5.0, 7.5, 10]$, 
$k_1=1.0$, $k_2=k_{-1}=100$, thus $K_M=200$, and $\sigma=s_0/K_M$. 
\textcolor{black}{The parameter $\varepsilon$ corresponds to the Reich-Selkov parameter 
$\varepsilon_{RS}=e_0/K_M$}. The solid black diamonds are 
the numerically estimated crossing times. The densely dashed line is obtained from (\ref{EST}). The dotted 
line is obtained from (\ref{tlowdagas}). {{\sc Top Left:}} $s_0 =2$. {{\sc Top Right:}} $s_0=20$. 
{{\sc Bottom Left:}} $s_0=200$. {{\sc Bottom Right:}} $s_0=2000$. Observe the noticeable 
difference between (\ref{ESTasex}) and (\ref{tlowdagas}) when $s_0$ is much larger than $K_M$. 
This is due to the difference in the constant terms of the expansions. One also sees that the 
lower estimate $t_\ell^\dagger$ from \eqref{tlowdagas} is worse than the upper estimate; compare 
Remark \ref{badsestrem}.}\label{FIG1}
\end{figure}
  
\begin{remark}\label{timeandsest}{\em 
Observe that $t_u^\dagger(1)$ corresponds to the estimate $T_{\rm in}$ from Noethen 
\& Walcher~\cite[Lemma 4]{NoWa}, but with an additional factor $(K_M+s_0)/K_M$ nested 
inside the logarithm. The presence of this term is relevant: the solution slows down 
significantly -- especially in the  $c$-direction -- near the $c$-nullcline in regions where 
$K_M \ll s$. In these regions, the solution will travel nearly horizontally and below the 
QSS manifold for an extended period of time before finally crossing. Moreover, the vanishing 
of $K_M$ gives rise to a line of equilibrium points at $c=e_0$. In this limiting case, the 
crossing time $t_{\rm cross}$ tends to infinity for any trajectory for which 
$c(0)\neq e_0$. This fact is reflected by the term $(K_M+s_0)/K_M$ in the expression for 
$t_u^\dagger(1)$.}
\end{remark}

Finally, it may be appropriate to look at the substrate depletion during the transitory phase 
from a general perspective: As shown by equations~\eqref{tonsdef} and \eqref{tupdagger} 
(setting $q=1$), the onset time for the slow dynamics will in any case be of the type
\begin{equation}
    t^*_{ons}= t_{SSl}\log\dfrac{M}{\varepsilon_{SSl}}+\cdots
\end{equation}
with some positive constant $M$. With slight modifications of {\bf Propositions~\ref{initlosslowprop}} 
and {\bf \ref{stuplem}} one arrives at 
\begin{equation}\label{initlossrough}
   \widehat M_1\cdot\varepsilon_{SSl}\,\log\dfrac{1}{\varepsilon_{SSl}}+\cdots
        \leq \dfrac{s_0-s(t_{ons}^*)}{s_0}
        \leq\widehat M_2\cdot\varepsilon_{SSl}\,\log\dfrac{1}{\varepsilon_{SSl}}+\cdots,
\end{equation}
with suitable constants $\widehat M_i$. Thus\textcolor{black}{, as $\varepsilon_{SSl}\to 0$} we 
have the asymptotic order 
$\varepsilon_{SSl}\,\log(1/\varepsilon_{SSl})$ for the relative initial substrate depletion.

\subsection{Third Step: Error estimates for the approximation}
We now turn toward global error estimates for the reduction. As in the previous subsection, 
we will express most estimates in terms of the Segel--Slemrod parameter $\varepsilon_{SSl}$.

For $t\geq t_{\rm cross}$, we consider the familiar Michaelis--Menten equation, augmented by 
an error term. We start from
\begin{equation}\label{mmerrorterm}
    \dot s=  -k_1e_0s+ (k_{-1}+k_1s)g(s)+(k_{-1}+k_1s)(c-g(s)).
\end{equation}

\begin{lemma}\label{enclose}
For all $t\geq t_{\rm cross}$, the $s$ entry of the solution of \eqref{eqmmirrev} with initial 
value $(s_0,0)$ satisfies
\begin{equation}\label{mmbothest}
\begin{array}{rcccccl}
    \dot s &\geq &-\dfrac{k_1k_2e_0s}{k_{-1}+k_2+k_1s};&& &&\\
    \dot s &\leq &-\dfrac{k_1k_2e_0s}{k_{-1}+k_2+k_1s} &+& \frac{1}{\sqrt{2}}k_1e_0s_0\cdot  
    \bigg(\cfrac{k_{-1}+k_1s_0}{k_{-1}+k_2+k_1s_0}\bigg)\cdot \bigg(\cfrac{k_1k_2e_0}
    {(k_{-1}+k_2)^2}\bigg)&=:&U(s).\\
\end{array}
\end{equation}
\end{lemma}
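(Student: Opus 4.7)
The plan is to treat both inequalities through the reformulation \eqref{mmerrorterm}. The algebraic identity
\[
-k_1 e_0 s + (k_{-1}+k_1 s)\, g(s) \;=\; -\frac{k_1 k_2 e_0 s}{k_{-1}+k_2+k_1 s}
\]
follows at once from the definition \eqref{classsmeq} of $g$, so \eqref{mmerrorterm} can be read as ``Michaelis--Menten RHS plus a residual'' with residual $(k_{-1}+k_1 s)\bigl(c - g(s)\bigr)$. The problem therefore reduces to controlling the sign and size of $(k_{-1}+k_1 s)L$, where $L := c - g(s)$, on $[t_{\rm cross},\infty)$.

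For the lower bound in \eqref{mmbothest}, \textbf{Lemma \ref{schahelem}} asserts that the solution lies on or above the graph of $g$ for all $t \geq t_{\rm cross}$; hence $L \geq 0$ on that interval, the residual is nonnegative, and the Michaelis--Menten RHS is a lower bound.

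For the upper bound, the main task is a uniform control of $L$ on $[t_{\rm cross},\infty)$. I would apply \textbf{Lemma \ref{approachlem}} with $t_0 = t_{\rm cross}$ and exact initial datum $L(t_{\rm cross})=0$ (which holds by definition of the crossing time). The exponentially decaying term in \eqref{Lestimgen} then drops out, leaving
\[
L(t)^2 \;\leq\; \tfrac{1}{2}\, s_0^2\, \varepsilon_{SSl}^2\,\varepsilon_{MM}^2\qquad\text{for all } t\geq t_{\rm cross},
\]
so $0\leq L(t)\leq (1/\sqrt{2})\, s_0\,\varepsilon_{SSl}\,\varepsilon_{MM}$. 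Since $\dot s<0$ always (Lemma \ref{schahelem}), we also have $k_{-1}+k_1 s(t)\leq k_{-1}+k_1 s_0$, and multiplying the two bounds yields
\[
(k_{-1}+k_1 s)(c-g(s)) \;\leq\; \frac{1}{\sqrt{2}}\,(k_{-1}+k_1 s_0)\, s_0\,\varepsilon_{SSl}\,\varepsilon_{MM}.
\]

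The remaining step is pure bookkeeping: unravel $s_0\varepsilon_{SSl}=k_1 e_0 s_0/(k_{-1}+k_2+k_1 s_0)$ and $\varepsilon_{MM}=k_1 k_2 e_0/(k_{-1}+k_2)^2$ to recognise the right-hand side as the additive error term in $U(s)$. The ``obstacle'' here is really only that prefactor identification; no delicate analysis beyond Lemmas \ref{schahelem} and \ref{approachlem} is needed. The conceptual point that makes the estimate sharp is that the crossing time gives the exact initial value $L(t_{\rm cross})=0$, which is what converts the generic order-$\varepsilon_{SSl}$ bound on $L$ from Lemma \ref{approachlem} into the much sharper order-$\varepsilon_{SSl}\varepsilon_{MM}$ bound valid uniformly for $t\geq t_{\rm cross}$.
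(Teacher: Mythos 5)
Your proposal is correct and follows essentially the same route as the paper: the same decomposition \eqref{mmerrorterm}, the sign of $c-g(s)$ from Lemma~\ref{schahelem} for the lower bound, and the uniform bound $|L|/s_0\leq \tfrac{1}{\sqrt2}\varepsilon_{SSl}\varepsilon_{MM}$ obtained from $L(t_{\rm cross})=0$ (the paper invokes the differential inequality \eqref{diffineqvar} directly rather than its integrated form in Lemma~\ref{approachlem}, which is an immaterial difference). The final bookkeeping with $k_{-1}+k_1s\leq k_{-1}+k_1s_0$ matches the paper's computation exactly.
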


\begin{proof} 
For the first inequality note that $c-g(s)\geq 0$ for $t\geq t_{\rm cross}$. For the second 
inequality, using \eqref{diffineqvar} with $L(t_{\rm cross})=0$, one obtains 
\begin{equation}
    \dfrac{L^2}{s_0^2}\leq \frac12\, \left(\varepsilon_{MM}\varepsilon_{SSl}\right)^2
    \cdot \left(1-\exp{[-(k_{-1}+k_2)(t-t_{\rm cross})]}\right)\leq \frac12\, 
    \left(\varepsilon_{MM}\varepsilon_{SSl}\right)^2
\end{equation}
for all $t\geq t_{\rm cross}$, thus one has $L/s_0\leq \frac{1}{\sqrt2}\varepsilon_{SSl}\varepsilon_{MM}$, 
with 
\begin{equation}\label{mmlowredest}
\begin{array}{rcccccl}
\dot s  &=&  -k_1e_0s+ (k_1s+k_{-1})g(s)&+&(k_{-1}+k_1s)(c-g(s))& & \\
        &\leq& -\dfrac{k_1k_2e_0s}{k_{-1}+k_2+k_1s} &+& \cfrac{1}{\sqrt2}
         \dfrac{k_{-1}+k_1s}{(k_{-1}+k_1s_0)}\dfrac{k_1e_0s_0(k_{-1}+k_1s_0)}
         {(k_{-1}+k_2+k_1s_0)}\cdot\dfrac{k_1k_2e_0}{(k_{-1}+k_2)^2}&&\\
         &\leq& -\dfrac{k_1k_2e_0s}{k_{-1}+k_2+k_1s} &+& \cfrac{1}{\sqrt2}k_1e_0s_0\cdot
         \dfrac{k_{-1}+k_1s_0}{(k_{-1}+k_2+k_1s_0)}\dfrac{k_1k_2e_0}{(k_{-1}+k_2)^2} &=& U(s),\\
\end{array}
\end{equation}
\end{proof}

Defining the equilibrium dissociation constant of enzyme-substrate complex as
\begin{equation}\label{KSdef}
    K_S:=\dfrac{k_{-1}}{k_1},
\end{equation}
one may rewrite
\begin{equation*}
U(s)=-\dfrac{k_2e_0s}{K_M+s}+\frac{1}{\sqrt 2}e_0s_0\left(\dfrac{K_S+s_0}{K_M+s_0}\right)\cdot\dfrac{k_2e_0}{K_M^2}.
\end{equation*}

\begin{remark}{\em 
By the same token, one obtains an estimate for product formation:
\begin{equation}\label{mmlowredest2}
\begin{array}{rcccccl}
\dot p &=  &  k_2g(s)&+&k_2(c-g(s))& & \\
     &\leq&\dfrac{k_1k_2e_0s}{k_{-1}+k_2+k_1s} &+& 
        \cfrac{1}{\sqrt2}\dfrac{k_1k_2e_0s_0}{(k_{-1}+k_2+k_1s_0)}\cdot\dfrac{k_1k_2e_0}{(k_{-1}+k_2)^2}&&\\
     &\leq&\dfrac{k_1k_2e_0s}{k_{-1}+k_2+k_1s} &+& 
        \cfrac{1}{\sqrt2}k_1e_0s_0\cdot\dfrac{k_{-1}+k_1s_0}{(k_{-1}+k_2+k_1s_0)}
            \cdot\dfrac{k_1k_2e_0}{(k_{-1}+k_2)^2}
    &=&\widetilde U(s).\\
\end{array}
\end{equation}
}
\end{remark}
For the following fix $\widetilde t\geq t_{\rm cross}$. By differential inequality arguments, we will 
estimate the difference of the $s$ entry of the solution of  \eqref{eqmmirrev} with initial values 
$(\widetilde s, \widetilde c)):=(s(\widetilde t), c( \widetilde t))$ -- which is just a time shift of 
the solution of \eqref{eqmmirrev} with initial values $(s_0,0)$ -- and the Michaelis--Menten equation 
with initial value $\widetilde s$. We will base our estimates on the auxiliary result below.

\begin{lemma}\label{slowestlem}
Let $a$, $b$ and $c$ be positive real numbers, $x_0,\,t_0\in\mathbb R$ and consider the initial 
value problems
\[
\begin{array}{rclcl}
\dot x&=&-\dfrac{cx}{x+a}+b&,&x(t_0)=x_0;\\
\dot y&=&-\dfrac{cy}{y+a} &,&y(t_0)=x_0;\\
\dot z&=& -\dfrac{cz}{z+a}&,& z(t_0)=z_0>x_0.
\end{array}
\]
Then,
\begin{enumerate}[(a)]
\item For all $t>t_0$, one has $x(t)-y(t)>0$.
\item Additionally, assume that $x_0>ab/(c-b)$. Then, $x(t)$ decreases for all $t>t_0$. We find 
that
\[
x(t)-y(t)\leq \dfrac{b\cdot(x_0+a)^2}{ac}\cdot\left(1-\exp\left[-\dfrac{ac}{(x_0+a)^2}\,(t-t_0)\right]\right)
    \leq \dfrac{b\cdot(x_0+a)^2}{ac}
\]
\item For all $t>t_0$, one has 
\[
0<z(t)-y(t)\leq (z_0-x_0)\cdot\exp\left[-\dfrac{ac}{(z_0+a)^2}\,(t-t_0)\right]\leq z_0-x_0.
\]
\end{enumerate}
\end{lemma}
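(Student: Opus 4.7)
The plan is to reduce each comparison to a first-order linear ODE for the difference functions $w := x - y$ and $v := z - y$, exploiting the algebraic identity
\[
\frac{x}{x+a} - \frac{y}{y+a} = \frac{a(x-y)}{(x+a)(y+a)}.
\]
For part (a), differentiating $w$ along the two ODEs yields $\dot w = b - \frac{ac}{(x+a)(y+a)}\, w$ with $w(t_0) = 0$. Since $\dot w(t_0) = b > 0$, $w$ is strictly positive just after $t_0$, and any putative first subsequent zero $t_1 > t_0$ would again satisfy $\dot w(t_1) = b > 0$, contradicting a return of $w$ from positive values to $0$. Hence $w(t) > 0$ for all $t > t_0$.

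For part (b), I would first establish monotonicity. Writing $\dot x = [ab - (c-b)x]/(x+a)$, the hypothesis $x_0 > ab/(c-b)$ (which implicitly forces $c > b$) identifies $\bar x := ab/(c-b)$ as the unique positive equilibrium of the $x$-equation; a standard phase-line argument shows $x$ decreases monotonically toward $\bar x$, so $x(t) \in [\bar x, x_0]$ for all $t \geq t_0$. The $y$-equation has $0$ as its only non-negative equilibrium, and $y(t) \in (0, x_0]$ is likewise monotonically decreasing from $x_0$. Combining, $(x+a)(y+a) \leq (x_0+a)^2$, and since $w \geq 0$ by part (a), the ODE for $w$ yields the differential inequality
\[
\dot w \leq b - \mu w, \qquad \mu := \frac{ac}{(x_0+a)^2}.
\]
A comparison with the explicit solution $W(t) = (b/\mu)\bigl(1 - e^{-\mu(t-t_0)}\bigr)$ of the majorant equation with $W(t_0) = 0$ delivers exactly the bound claimed, and $W(t) \leq b/\mu$ gives the crude global bound.

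For part (c), $v$ satisfies the homogeneous linear equation $\dot v = -\frac{ac}{(z+a)(y+a)}\, v$ with $v(t_0) = z_0 - x_0 > 0$; hence $v$ remains strictly positive and $0 < y(t) < z(t)$ for all $t > t_0$. Since both $z$ and $y$ are monotonically decreasing from $z_0$ and $x_0$ respectively (same phase-line argument, with equilibrium $0$), we have $(z+a)(y+a) \leq (z_0+a)^2$, so $\dot v \leq -\nu v$ with $\nu := ac/(z_0+a)^2$, and a Gronwall-type comparison with $V(t) = (z_0-x_0) e^{-\nu(t-t_0)}$ gives the asserted exponential decay. The main technical obstacle is the monotonicity bookkeeping needed in (b) and (c): to replace the non-autonomous coefficient $ac/[(x(t)+a)(y(t)+a)]$ by the autonomous majorant $\mu$ (resp.\ $\nu$), one must bound $x$ and $y$ (resp.\ $z$ and $y$) from above uniformly on $[t_0,\infty)$, which requires the stability/phase-line analysis of the equilibria rather than merely a local initial decrease. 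Everything else is standard differential inequality work of the type already used throughout Section~\ref{estsec}.
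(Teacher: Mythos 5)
Your proof is correct and follows essentially the same route as the paper's: the same identity $\frac{x}{x+a}-\frac{y}{y+a}=\frac{a(x-y)}{(x+a)(y+a)}$, the same monotonicity observations to bound the non-autonomous coefficient by $ac/(x_0+a)^2$ (resp.\ $ac/(z_0+a)^2$), and the same comparison with the explicit linear majorant equations. The only difference is that you spell out the elementary first-zero argument for part (a) where the paper simply cites a standard differential-inequality result.
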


\begin{proof} Part (a), due to $x(0)=y(0)$, follows directly from a standard result on differential 
inequalities.

Turning to the proof of part (b), note that $x^*=ab/(c-b)$ is the only stationary point of the 
differential equation for $x$. So, the solution with initial value $x_0>x^*$ is strictly decreasing 
and converges to this point. Now, we have
\[
\begin{array}{rcl}
\cfrac{d}{dt}\left(x-y\right)&=& -c\left(\dfrac{x}{x+a}-\dfrac{y}{y+a}\right)+b\\
 &=&-c\dfrac{a(x-y)}{(x+a)(y+a)} +b\\
 &\leq & -\dfrac{ac}{(x_0+a)^2}\,(x-y)+b.
\end{array}
\]
Compare this with the solution of the initial value problem
\[
\dot v=-\dfrac{ac}{(x_0+a)^2}v+b,\quad v(0)=0
\]
to obtain the assertion. As for part (c), the first inequality is immediate, while the second is 
verified by a variant of the previous argument, with the inequality
\[
\begin{array}{rcl}
\cfrac{d}{dt}\left(z-y\right)&=& -c\left(\dfrac{z}{z+a}-\dfrac{y}{y+a}\right)\\
 &\leq & -\dfrac{ac}{(z_0+a)^2}\,(z-y).
\end{array}
\]
\end{proof}
Evaluating the constant in part (b) of {\bf Lemma~\ref{slowestlem}} with $a=K_M$, 
$b= \frac{1}{\sqrt2}e_0s_0\left(\cfrac{k_2e_0}{K_M^2}\right)\cfrac{K_S+s_0}{K_M+s_0}$, $c=k_2e_0$ 
and $x_0=\widetilde s$, we obtain
\begin{subequations}
\begin{align*}
\frac{1}{\sqrt2}s_0\cdot \dfrac{e_0}{K_M}\cdot\dfrac{K_S+s_0}{K_M+s_0}\dfrac{K_M+\widetilde s}{K_M}\cdot \dfrac{K_M+\widetilde s}{K_M}
    &\leq\frac{1}{\sqrt2}s_0\cdot \dfrac{e_0}{K_M}\cdot\dfrac{K_S+s_0}{K_M+s_0}\left(\dfrac{K_M+s_0}{K_M}\right)^2\\
    &=\frac{1}{\sqrt2}s_0\cdot \varepsilon_{RS}\cdot\dfrac{(K_M+s_0)(K_S+s_0)}{K_M^2}.
\end{align*}
\end{subequations}
Choosing a natural scaling (and omitting the factor $\frac{1}{\sqrt 2}$), the parameter
\begin{equation}\label{penultisp}
\varepsilon_{L}:=\varepsilon_{RS}\dfrac{(K_M+s_0)(K_S+s_0)}{K_M^2}=\dfrac{e_0}{K_M}\dfrac{(K_M+s_0)(K_S+s_0)}{K_M^2}
    =\dfrac{k_1e_0}{k_{-1}+k_2}\dfrac{(k_{-1}+k_2+k_1s_0)(k_{-1}+k_1s_0)}{(k_{-1}+k_2)^2}
\end{equation}
provides an upper estimate for the long-term accuracy of the reduction. Note that the index indicates 
that the parameter was obtained from a linear differential inequality.

\subsubsection{Estimates for the slow dynamics: Special case}
In the application of the QSSA, it is generally assumed that there is an initial transient during 
which the substrate concentration remains approximately constant or changes slowly while the complex 
concentration builds up. This assumption -- that the substrate concentration does not change 
significantly during this initial transient -- is known as the reactant stationary 
approximation \cite{rsa,validity}. The general assumption is that $s\approx s_0$ from $t=0$ 
until $t_{\rm cross}$. However, this a qualitative estimate. A more careful analysis is required 
in order to formulate a quantitative assertion concerning the validity of the reactant stationary 
approximation.

We first determine estimates given the special 
assumption that the substrate concentration at the start of the slow phase is exactly known. In view 
of {\bf Lemma~\ref{enclose}} and {\bf Lemma~\ref{slowestlem}}, we then obtain
\begin{proposition}\label{penultimatepropone}
Denote by $s(t)$ the first component of the solution of \eqref{eqmmirrev} with initial value 
$(s_0,0)$ at $t=0$. Moreover, let $\widetilde t\geq t_{\rm cross}$, $\widetilde s:=s(\widetilde t)$ 
and define $\underline{s}$, resp.\ $\overline{s}$ by
\begin{equation}\label{mmbothestapp}
\begin{array}{rcccl}
\dot{\underline{s}} &= &-\dfrac{k_2e_0\underline{s}}{K_M+\underline{s}} &,& \underline{s}(\widetilde t)=\widetilde s;\\
\dot{\overline{s}}  & =&-\dfrac{k_1k_2e_0\overline{s}}{K_M+\overline{s}} + 
    \sqrt2k_1e_0s_0\cdot \bigg(\cfrac{k_{-1}+k_1s_0}{k_{-1}+k_2+k_1s_0}\bigg)\cdot \bigg(\cfrac{k_1k_2e_0}{(k_{-1}+k_2)^2}\bigg)
        &,& \overline{s}(\widetilde t)=\widetilde s.\\
\end{array}
\end{equation}
Then, for all $t\geq \widetilde t$, we have 
\begin{equation}\label{sandwich}
    \underline{s}(t)\leq s(t)\leq\overline{s}(t)
\end{equation}
and
\begin{equation}\label{lockup}
    \overline{s}(t)-s(t)\leq s_0\cdot\varepsilon_{L};\quad s(t)-\underline{s}(t)\leq s_0\cdot\varepsilon_{L}.
\end{equation}
\end{proposition}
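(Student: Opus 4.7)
The plan is to combine the one-sided differential inequalities from Lemma~\ref{enclose} with the comparison estimates in Lemma~\ref{slowestlem}, taking $\underline{s}$ and $\overline{s}$ as the standard Michaelis--Menten flow and its forced analogue respectively.

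For the sandwich \eqref{sandwich}, I would argue as follows. Lemma~\ref{enclose} gives the two-sided differential inequality
\begin{equation*}
    -\dfrac{k_2 e_0 s}{K_M+s} \leq \dot s \leq U(s) \qquad \text{for all } t\geq \widetilde t\geq t_{\rm cross},
\end{equation*}
where the lower bound is the standard Michaelis--Menten right-hand side (after using $k_{-1}+k_2+k_1 s = k_1(K_M+s)$) and $U$ is as in \eqref{mmbothest}. Since $\underline{s}$ and $\overline{s}$ are defined as the solutions of the corresponding ODE equalities with common initial datum $\widetilde s = s(\widetilde t)$, the standard comparison principle for scalar first-order ODEs (Walter~\cite{walter}, \S9), which is essentially part (a) of Lemma~\ref{slowestlem} applied twice, immediately yields $\underline{s}(t)\leq s(t)\leq \overline{s}(t)$ for all $t\geq\widetilde t$.

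For the width estimates \eqref{lockup}, since $s$ is sandwiched between $\underline{s}$ and $\overline{s}$, it suffices to bound $\overline{s}(t)-\underline{s}(t)$. This is precisely the setting of Lemma~\ref{slowestlem}(b), with the identifications $a=K_M$, $c$ the linear rate coefficient in the $\overline{s}$-equation, $x_0=\widetilde s$, and $b$ the constant forcing term in the $\overline{s}$-equation, so that $\overline{s}$ plays the role of $x$ and $\underline{s}$ the role of $y$. Part (b) then yields
\begin{equation*}
    \overline{s}(t)-\underline{s}(t)\leq \dfrac{b(\widetilde s+K_M)^2}{K_M\cdot c}.
\end{equation*}
Using $\widetilde s\leq s_0$, substituting the explicit expression for $b$, and rewriting via $K_S=k_{-1}/k_1$ and $K_M=(k_{-1}+k_2)/k_1$ reproduces the algebraic simplification performed just before \eqref{penultisp} and delivers the bound $s_0\cdot\varepsilon_L$ for both $\overline{s}-s$ and $s-\underline{s}$.

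The main technical obstacle is the verification of the hypothesis $x_0>ab/(c-b)$ of Lemma~\ref{slowestlem}(b), which is what guarantees that $\overline{s}$ is monotonically decreasing and hence that $(\overline{s}+K_M)(\underline{s}+K_M)\geq (\widetilde s+K_M)^{-2}$ inversions used in the proof of (b) are valid. Since $b$ is of order $e_0^2$ while $c$ is of order $e_0$, the threshold $ab/(c-b)$ tends to zero with $e_0$, whereas Proposition~\ref{stuplem} shows that $\widetilde s\geq s_{\rm cross}$ remains close to $s_0$ for small $\varepsilon_{SSl}$; hence the hypothesis is automatic in the small-parameter regime of interest. I would either state this smallness explicitly as an auxiliary assumption, or handle the residual case $\widetilde s<ab/(c-b)$ separately by replacing $\widetilde s+K_M$ by $ab/(c-b)+K_M$ in the estimate, which remains $O(s_0)$ and so does not affect the final form $s_0\cdot\varepsilon_L$.
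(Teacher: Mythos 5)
Your argument is essentially the paper's own proof: Lemma~\ref{enclose} plus the standard comparison principle gives the sandwich \eqref{sandwich}, and Lemma~\ref{slowestlem}(a),(b) with $a=K_M$, $c=k_2e_0$, $x_0=\widetilde s$ and $b$ the constant forcing term bounds $\overline{s}-\underline{s}$ by the quantity computed just before \eqref{penultisp}, which with $\widetilde s\leq s_0$ yields $s_0\varepsilon_L$. The one place you go beyond the paper is in flagging the hypothesis $x_0>ab/(c-b)$ of Lemma~\ref{slowestlem}(b), which the paper invokes silently; this is a legitimate point, since without it $\overline{s}$ need not decrease and the inversion $\left((\overline{s}+K_M)(\underline{s}+K_M)\right)^{-1}\geq (\widetilde s+K_M)^{-2}$ used in part (b) can fail (indeed, if $b\geq c$ the bound \eqref{lockup} eventually breaks down), so either the smallness assumption should be made explicit or the residual case handled as you sketch.
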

\begin{proof}
To prove \eqref{sandwich}, use {\bf Lemma~\ref{enclose}}. Moreover, parts (a) and (b) of 
{\bf Lemma~\ref{slowestlem}} show that 
\[
\overline{s}(t)-\underline{s}(t)\leq \frac{1}{\sqrt2}s_0\varepsilon_{RS}\cdot 
    \dfrac{K_S+s_0}{K_M+s_0}\cdot\left(\dfrac{K_M+\widetilde s} {K_M}\right)^2 \leq 
    \frac{1}{\sqrt2}s_0\varepsilon_{RS}\cdot\dfrac{(K_M+s_0)(K_S+s_0)}{K_M^2}<s_0\,\varepsilon_{L},
\]
which in combination with \eqref{sandwich} proves \eqref{lockup}.
\end{proof}
There is a different approach to upper and lower estimates for $s$ in the slow regime, based on 
{\bf Lemma \ref{nowaestlem}}, with the parameter $\delta^*$ defined in \eqref{deltastardef}. We also 
utilize the explicit solution of the Michaelis--Menten equation via the Lambert W function, as obtained 
in Schnell \& Mendoza~\cite{SchMen}.

\begin{proposition}\label{penultimatepropeleven}
Denote by $s(t)$ the first component of the solution of \eqref{eqmmirrev} with initial value $(s_0,0)$ 
at $t=0$. Moreover let $\widetilde t\geq t_{\rm cross}$, $\widetilde s:=s(\widetilde t)$, and 
$1>\delta\geq \delta^*$.
\begin{enumerate}[(a)]
\item Define $\underline{s}$, resp.\ $\overline{s}$ by
\begin{equation}\label{mmnwest}
\begin{array}{rcccl}
    \dot{\underline{s}} &= & -\dfrac{k_2e_0\underline{s}}{K_M+\underline{s}}
                                    &,& \underline{s}(\widetilde t)=\widetilde s;\\
    \dot{\overline{s}}  & =&-(1-\delta)\dfrac{k_1k_2e_0\overline{s}}{K_M+\overline{s}}
                                    &,& \overline{s}(\widetilde t)=\widetilde s.\\
\end{array}
\end{equation}
Then, for all $t\geq \widetilde t$, we have 
\begin{equation}\label{sandwich2}
    \underline{s}(t)\leq s(t)\leq\overline{s}(t).
\end{equation}
\item Explicitly, setting
\begin{equation*}
    A:=\dfrac{\widetilde s}{K_M}\exp\dfrac{\widetilde s}{K_M},\quad T:=\dfrac{k_2e_0(t-\widetilde t)}{K_M}
\end{equation*}
we obtain
\begin{equation}\label{Lambertestone}
    \begin{array}{rcl}
    \underline{s}(t)&=& K_M\,W(A\exp(-T))\\
    \overline{s}(t)&=&  K_M\,W(A\exp(-T)\exp(\delta T))\\
    \end{array}
\end{equation}
\end{enumerate}
\end{proposition}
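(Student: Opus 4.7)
The plan is to prove (a) by two one-sided differential-inequality comparisons — the lower bound $s \geq \underline s$ comes immediately from the first inequality in \textbf{Lemma \ref{enclose}}, while the upper bound $s \leq \overline s$ requires extracting the bound $c \leq g_\delta(s)$ from \textbf{Lemma \ref{nowaestlem}} and substituting it into the $s$-equation — and to prove (b) by separating variables in the two scalar ODEs, recognising the resulting implicit solution as the defining equation of the Lambert W function.

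For the lower bound in (a), the first inequality in \textbf{Lemma \ref{enclose}} gives $\dot s \geq -k_2 e_0 s/(K_M+s)$ for all $t \geq t_{\rm cross}$; since $\underline s$ satisfies this ODE with equality and matches $s$ at $\widetilde t$, a standard Lipschitz comparison theorem yields $\underline s \leq s$. For the upper bound I fix $\delta \geq \delta^*$ and first argue that the trajectory lies in the strip $\{g_0 \leq c \leq g_\delta\}$ for all $t \geq t_{\rm cross}$: at $t_{\rm cross}$ it sits on $c = g_0(s)$, and since $g_\delta(s) > g_0(s)$ strictly for $s>0$ while \textbf{Lemma \ref{schahelem}} forces $c > g_0(s)$ immediately afterwards, continuity of the flow places the trajectory in the interior of the strip, where positive invariance from \textbf{Lemma \ref{nowaestlem}} keeps it. Substituting $c \leq g_\delta(s)$ into \eqref{eqmmirrev2} and simplifying gives
\[
\dot s \;\leq\; -k_1 e_0 s + (k_{-1}+k_1 s)\, g_\delta(s) \;=\; -\frac{(1-\delta)\,k_1 k_2 e_0 s}{(1-\delta)k_2 + k_{-1}+k_1 s} \;\leq\; -(1-\delta)\,\frac{k_2 e_0 s}{K_M+s},
\]
the last step using $(1-\delta)k_2 \leq k_2$. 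A second Lipschitz comparison, now against $\overline s$, closes the bound.

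For (b), both ODEs in \eqref{mmnwest} separate to $(K_M+s)/s\,ds = -\kappa\,dt$, with $\kappa = k_2 e_0$ for $\underline s$ and $\kappa = (1-\delta)k_2 e_0$ for $\overline s$. Integrating from $\widetilde t$ and dividing by $K_M$ produces
\[
(s/K_M)\exp(s/K_M) \;=\; (\widetilde s/K_M)\exp(\widetilde s/K_M)\cdot\exp\bigl(-\kappa(t-\widetilde t)/K_M\bigr),
\]
so the defining property $W(z)\,e^{W(z)} = z$ of the Lambert W function delivers $\underline s = K_M\,W(A e^{-T})$ in the first case and $\overline s = K_M\,W(A e^{-(1-\delta)T}) = K_M\,W(A e^{-T}\,e^{\delta T})$ in the second, matching \eqref{Lambertestone}.

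The only delicate point I anticipate is the invariance step for the upper bound: since the trajectory sits on the boundary $c = g_0(s)$ at $t_{\rm cross}$, one must verify that it actually enters the open strip rather than travelling along the boundary or exiting through it. The strict gap $g_\delta(s) - g_0(s) > 0$ for $s>0$ together with \textbf{Lemma \ref{schahelem}} handles this, but it is the one place where a supporting geometric argument is needed alongside the otherwise mechanical comparison machinery.
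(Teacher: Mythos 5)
Your proposal is correct and follows exactly the route the paper intends (the paper states this proposition without a written proof, pointing only to Lemma~\ref{nowaestlem} for the enclosure and to the Schnell--Mendoza Lambert~$W$ solution for the explicit formulas): lower bound from $c\geq g(s)$ after $t_{\rm cross}$, upper bound from $c\leq g_\delta(s)$ via the positively invariant strip, then separation of variables. Your "delicate point" is actually a non-issue, since Lemma~\ref{nowaestlem} asserts positive invariance of the \emph{closed} region between the graphs of $g_0$ and $g_\delta$, which already contains the crossing point on $c=g_0(s)$; and you are right to read the $k_1$ in the stated equation for $\dot{\overline{s}}$ as a typo, since only the form $-(1-\delta)k_2e_0\overline{s}/(K_M+\overline{s})$ is consistent with both your derivation and formula~\eqref{Lambertestone}.
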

We turn to estimating $\overline{s}-\underline{s}$, using basic properties of the Lambert $W$ function that 
can for instance be found in Mez\H{o}\ \cite[Section~1]{Mez}.

\begin{lemma}\label{slowlamestlem}
With the notation introduced in {\bf Lemma~\ref{penultimatepropeleven}}, one has
\begin{equation}\label{lwestimates}
    \begin{array}{rclcl}
    \overline{s}-\underline{s} &\leq& K_M\log\left(1+W(Ae^{-T})(e^{\delta T}-1)\right)
                & \leq & K_M W(Ae^{-T})(e^{\delta T}-1)\\
    \overline{s}-\underline{s} &\leq& K_M\log\left(1+Ae^{-T}(e^{\delta T}-1)\right)
                & \leq & K_M Ae^{-T}(e^{\delta T}-1)\\
    \end{array}
\end{equation}
for all $t\geq \widetilde t$.
\end{lemma}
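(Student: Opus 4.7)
The goal is to convert the lemma into an algebraic inequality about Lambert $W$ values at two related arguments. I would introduce the shorthand $u := W(Ae^{-T}) = \underline{s}/K_M$, $v := W(Ae^{-T}e^{\delta T}) = \overline{s}/K_M$, and $w := v - u = (\overline{s} - \underline{s})/K_M \geq 0$. From the defining property $y = W(x) \Leftrightarrow ye^y = x$ of the principal branch one has $ue^u = Ae^{-T}$ and $(u+w)e^{u+w} = Ae^{-T}e^{\delta T}$, and dividing these two identities yields the compact relation
\begin{equation*}
    (u+w)\,e^w = u\,e^{\delta T},
\end{equation*}
which rearranges to the useful identity $u(e^{\delta T}-1) = u(e^w-1) + we^w$.

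The key step is the leftmost inequality on the first line of \eqref{lwestimates}, which, after taking $\exp$, is equivalent to $e^w - 1 \leq u(e^{\delta T} - 1)$. Substituting the identity just derived, this reduces to $(e^w - 1)(1-u) \leq we^w$. I would handle this by a trivial case split: if $u > 1$ the left side is non-positive and the right side is non-negative; if $0 \leq u \leq 1$ then $1 - u \leq 1$, so the left side is at most $e^w - 1$, which in turn is at most $we^w$ by the elementary inequality $e^w - 1 \leq we^w$ for $w \geq 0$ (immediate since $we^w - e^w + 1$ vanishes at $w=0$ and has nonnegative derivative $we^w$).

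The remaining inequalities are routine. The second inequality on each line is $\log(1 + x) \leq x$ for $x \geq 0$. The first inequality on the second line follows from the first inequality on the first line together with the standard bound $W(y) \leq y$ for $y \geq 0$ (which is immediate from $y = W(y)\,e^{W(y)} \geq W(y)$) and the monotonicity of $\log$. Combined, these yield the full chain in \eqref{lwestimates}.

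The main obstacle is the brief algebra in the key step; everything else is bookkeeping, and no further use of the ODEs for $\overline{s}, \underline{s}$ is needed beyond the explicit Lambert $W$ representations already supplied by Proposition~\ref{penultimatepropeleven}.
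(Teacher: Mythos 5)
Your proof is correct, but it reaches the key first inequality by a genuinely different route than the paper. The paper writes
\begin{equation*}
K_M^{-1}(\overline{s}-\underline{s})=\int_\alpha^\beta W'(x)\,\mathrm{d}x=\int_\alpha^\beta\frac{\mathrm{d}x}{x+\exp(W(x))},\qquad \alpha:=Ae^{-T},\ \beta:=\alpha e^{\delta T},
\end{equation*}
bounds the integrand using monotonicity of $W$ (replacing $\exp(W(x))$ by $\exp(W(\alpha))$), evaluates the resulting elementary integral as a logarithm, and then simplifies via the identity $\alpha e^{-W(\alpha)}=W(\alpha)$; the second line of \eqref{lwestimates} then follows from $0\leq W(x)\leq x$ and $\log(1+x)\leq x$, exactly as in your bookkeeping. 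You instead avoid the derivative formula and the integral entirely: dividing the two functional equations $ue^u=Ae^{-T}$ and $(u+w)e^{u+w}=Ae^{-T}e^{\delta T}$ gives $(u+w)e^w=ue^{\delta T}$, and the target inequality $e^w-1\leq u(e^{\delta T}-1)$ collapses to $(e^w-1)(1-u)\leq we^w$, which you settle by the case split on $u\gtrless 1$ and the elementary bound $e^w-1\leq we^w$. Both arguments are sound (your implicit division by $ue^u$ requires $u>0$, i.e.\ $\widetilde s>0$, which always holds here, and the degenerate case is trivial). Your version is more elementary and self-contained, needing only the defining relation of $W$; the paper's integral formulation is slightly more flexible in that sharper or alternative bounds could be obtained by different estimates of the integrand, and it makes the monotonicity mechanism behind the estimate more transparent.
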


\begin{proof}
Let us abbreviate $\alpha:=Ae^{-T}$ and $\beta:=\alpha e^{\delta T}$. Then, with a known identity for $W'$ and 
monotonicity of $W$, one sees
\begin{equation*}
    \begin{array}{rcl}
    K_M^{-1}\left(\overline{s}-\underline{s}\right) 
        &=& \displaystyle \int_\alpha^\beta W'(x)\,{\rm d}x=\int_\alpha^\beta\dfrac{{\rm d}x}{x+\exp(W(x))}\\
    &\leq& \displaystyle \int_\alpha^\beta\dfrac{{\rm d}x}{x+\exp(W(\alpha))} = \log(x+\exp(W(\alpha))|_\alpha^\beta\\
    &=& \log\left(\dfrac{1+\beta e^{-W(\alpha)}}{1+\alpha e^{-W(\alpha)}}\right)
        =\log\left(1+\dfrac{\alpha e^{-W(\alpha)}(e^{\delta T}-1)}{1+\alpha e^{-W(\alpha)}}\right)\\
    &\leq& \log\left(1+\alpha e^{-W(\alpha)} (e^{\delta T}-1)\right)\\
    &=& \log\left(1+W(\alpha) (e^{\delta T}-1)\right),\\
    \end{array}
\end{equation*}
where we have used the defining identity for $W$ in the last step. This shows the first inequality, and 
the remaining ones follow from $0\leq W(x)\leq x$ and $\log(1+x)\leq x$ when $x\geq 0$.
\end{proof}

Presently, we will use only the last inequality from \eqref{lwestimates} to obtain a global error estimate.
\begin{proposition}\label{penultimateproptwelve}
With the assumptions and notation from Proposition \ref{penultimatepropeleven}, for all $t\geq \widetilde t$ 
the following inequalities hold:
\begin{equation}\label{epslwdef}
0 \leq s-\underline{s}\leq \overline{s}-\underline{s}
    \leq s_0\,\exp\left(\dfrac{s_0}{K_M}-1\right)\cdot\dfrac{\delta}{1-\delta}
    \leq s_0\,\exp\left(\dfrac{s_0}{K_M}-1\right)\cdot\dfrac{\delta^*}{1-\delta^*}=:s_0\cdot\varepsilon_{W}.
\end{equation}
\end{proposition}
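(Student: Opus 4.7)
The plan is to leverage the sandwich $\underline{s} \leq s \leq \overline{s}$ from Proposition~\ref{penultimatepropeleven}(a), which immediately delivers the first two inequalities $0\leq s-\underline s\leq \overline s-\underline s$; the entire proof then reduces to a uniform-in-$t$ estimate of $\overline s - \underline s$. For this I would start from the last, sharpest inequality in Lemma~\ref{slowlamestlem},
\[
\overline s(t)-\underline s(t)\;\leq\; K_M\,A\,e^{-T}\bigl(e^{\delta T}-1\bigr),\qquad A=\frac{\widetilde s}{K_M}\exp\!\frac{\widetilde s}{K_M},
\]
and estimate the two factors separately.

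The prefactor $K_M A = \widetilde s\,\exp(\widetilde s/K_M)$ is handled by monotonicity: the map $x\mapsto x e^{x/K_M}$ is increasing on $[0,\infty)$, and Lemma~\ref{schahelem} gives $\widetilde s = s(\widetilde t)\leq s_0$, so $K_M A \leq s_0\,\exp(s_0/K_M)$. It then remains to bound the $t$-dependent piece $h(T):=e^{-T}(e^{\delta T}-1)$ uniformly for $T\geq 0$. I would substitute $u=e^{-T}\in(0,1]$, which transforms $h$ into $g(u)=u^{1-\delta}-u$ on $[0,1]$. Routine calculus locates the unique interior maximum at $u^{\star}=(1-\delta)^{1/\delta}$ with value
\[
g(u^{\star})\;=\;\delta\,(1-\delta)^{(1-\delta)/\delta}\;=\;\frac{\delta}{1-\delta}\,(1-\delta)^{1/\delta}.
\]

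The packaging step --- arguably the one non-routine move --- is to invoke the elementary inequality $1-\delta\leq e^{-\delta}$; raising to the $1/\delta$ power yields $(1-\delta)^{1/\delta}\leq e^{-1}$, hence $\sup_{T\geq 0} h(T)\leq e^{-1}\delta/(1-\delta)$. Reassembling, $\overline s(t)-\underline s(t)\leq s_0\exp(s_0/K_M-1)\cdot \delta/(1-\delta)$ for every admissible $\delta\in[\delta^{\star},1)$, which is the third inequality in the statement.

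For the final inequality, observe that $\delta\mapsto\delta/(1-\delta)$ is increasing on $[0,1)$; Lemma~\ref{nowaestlem} guarantees that $\delta=\delta^{\star}$ is the smallest admissible choice, so specializing to $\delta=\delta^{\star}$ produces the tightest bound --- precisely the defining expression for $s_0\,\varepsilon_W$. The only real obstacle is spotting that $1-\delta\leq e^{-\delta}$ is the correct elementary estimate to collapse the awkward $(1-\delta)^{(1-\delta)/\delta}$ into the clean form $e^{-1}/(1-\delta)$; everything else amounts to straightforward bookkeeping and monotonicity.
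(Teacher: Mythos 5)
Your proposal is correct and follows essentially the same route as the paper: both start from the last inequality of Lemma~\ref{slowlamestlem}, bound the prefactor by $\widetilde s\exp(\widetilde s/K_M)\leq s_0\exp(s_0/K_M)$, and maximize $e^{-T}(e^{\delta T}-1)$ over $T\geq 0$, with the key elementary step $1-\delta\leq e^{-\delta}$ (the paper phrases it as $T^*=-\log(1-\delta)/\delta\geq 1$, you as $(1-\delta)^{1/\delta}\leq e^{-1}$ after substituting $u=e^{-T}$ --- the same inequality). Your explicit justification of the final step, choosing the smallest admissible $\delta=\delta^*$ to get the tightest bound, is in fact slightly more careful than the paper's ``the assertion follows.''
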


\begin{proof}
By elementary arguments, the function $T\mapsto e^{-T}(e^{\delta T}-1)$, with derivative 
$T\mapsto e^{-T}\left(1-(1-\delta)e^{\delta T}\right)$ attains its maximum at $T^*=-\log(1-\delta)/\delta\geq 1$, 
with value 
\begin{equation*}
\widetilde s\exp(\widetilde s/K_M)\exp(-T^*)\cdot\dfrac{\delta}{1-\delta}
        \leq s_0\exp(s_0/K_M)\exp(-1)\cdot\dfrac{\delta}{1-\delta}.
\end{equation*}
The assertion follows.
\end{proof}

\begin{remark}{\em  The index in $\varepsilon_{W}$ should remind of its derivation via the Lambert $W$ function. 
This may not be a particularly user-friendly parameter, but one can replace it by more convenient estimates. 
For instance, in case $\varepsilon_{RS}\leq 0.1$, by \eqref{deltastarest} one may choose 
$\delta\leq \frac{10}{9}\varepsilon_{RS}$, and proceed to obtain the estimate
\begin{equation*}
\varepsilon_{LW}\leq \frac{5}{4}\exp\left(\dfrac{s_0}{K_M}-1\right)\cdot\varepsilon_{RS}.
\end{equation*}
}
\end{remark}

\begin{remark}{\em 
For all $t\geq \widetilde t$, we thus obtained the estimates $|s-\underline s|\leq s_0\varepsilon_{W}$, 
and $|s-\underline s|\leq s_0\varepsilon_{L}$. Either of these may be better, given the circumstances. 
\textcolor{black}{Both estimates are rigorous, and moreover $\widetilde t\geq t_{\rm cross}$, for which 
rigorous lower estimates are available.} However, we have to note that their derivation involves some 
simplified estimates, so they may not be optimal. Indeed, extensive numerical experiments point to 
an upper estimate 
\begin{equation}\label{optimalguess}
    \varepsilon_{opt}:=\dfrac{K_S+s_0}{K_M+s_0}\varepsilon_{SSl}\leq\varepsilon_{SSl},
\end{equation}
 but with our toolbox a rigorous proof for this conjecture does not seem possible (see, 
 {{\sc Figure}}~\ref{FIGWW} \textcolor{black}{and also {{\sc Figure}}~\ref{FIGXXZ} below}).}
\end{remark}

\begin{figure}[htb!]
\centering
\includegraphics[width=10.0cm]{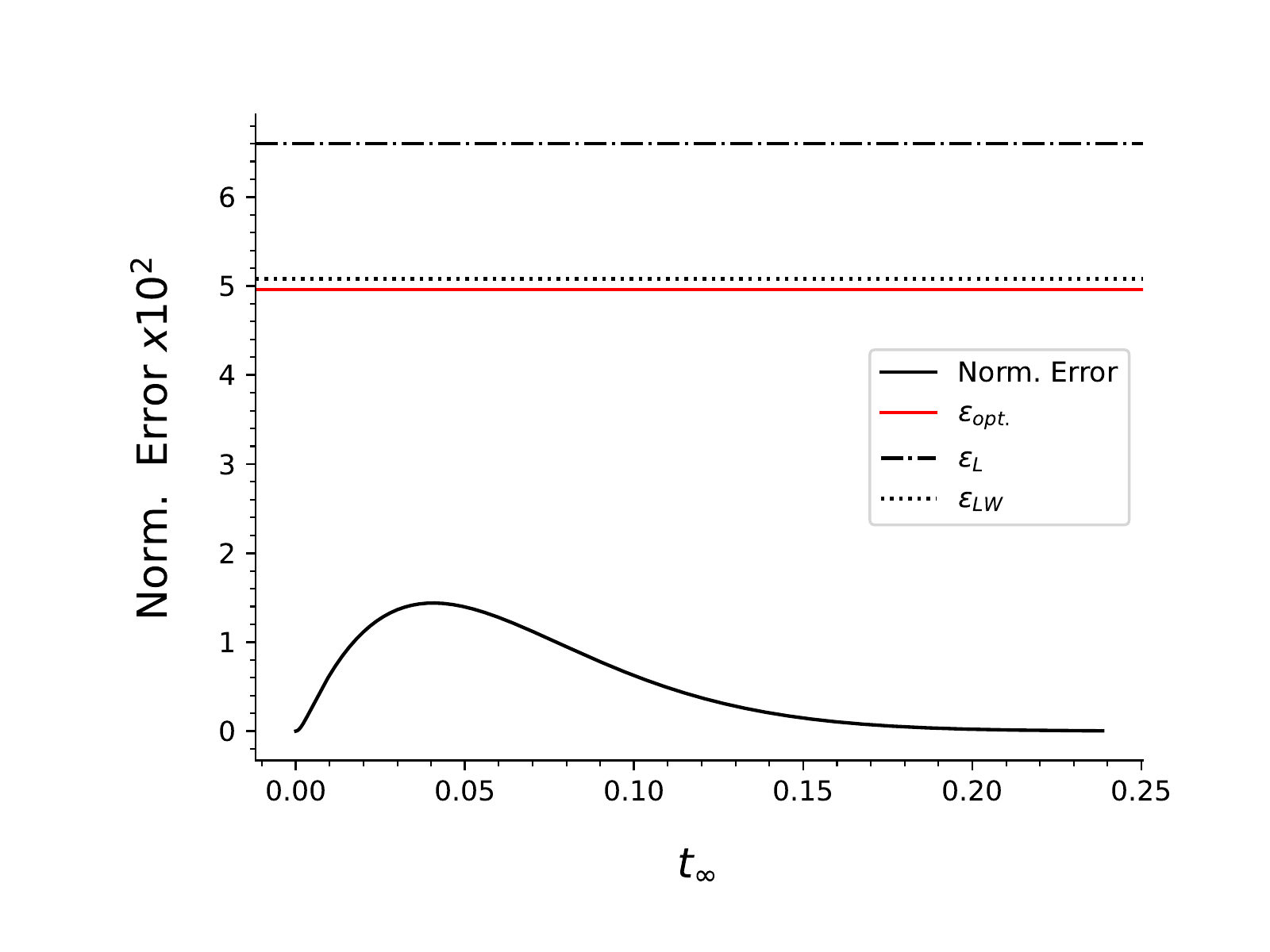}\\
\includegraphics[width=10.0cm]{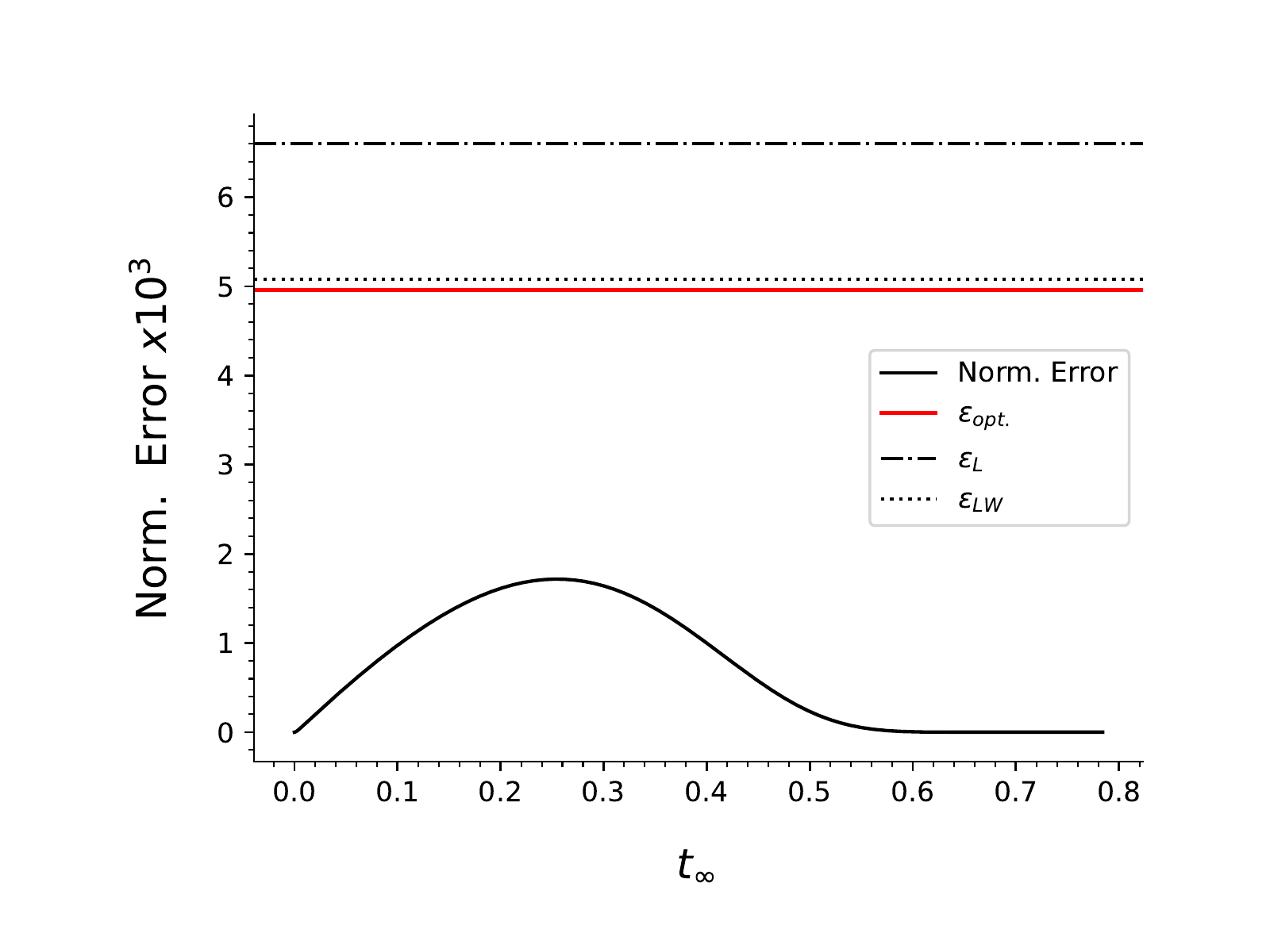}
\caption{\textbf{Numerical simulations suggest that (\ref{optimalguess}) provides an upper bound on the 
normalized error between the $s$-component of the mass action equations and the sQSSA for the complete 
time course when initial conditions lie on the QSS variety, $c=g(s)$.} In both panels, the black curve 
is the numerically-estimated normalized absolute error, $|\xi-s|/s_0$. \textcolor{black}{The dash-dotted 
and dotted lines correspond to $\varepsilon_L$ and $\varepsilon_{LW}$, respectively, and the} red line is 
$\varepsilon_{\rm opt.}$  On the $x$-axis, $t$ has been mapped to $t_{\infty}=1-1/\log(t+e)$, and initial 
conditions for the mass action equations and the sQSSA satisfy $(s,c)(0)=(s,c)(t_{\rm cross})$ and 
$\xi(0)=s(t_{\rm cross})$, respectively ($t_{\rm cross}$ is estimated numerically). {{\sc Top}}: The 
parameters used in the simulation are (in arbitrary units):  $s_0=10.0$, $e_0=10.0$, $k_1=2.0$, 
$k_2=100.0$ and $k_{-1}=100.0$. {{\sc Bottom}}: The parameters used in the simulation are 
(in arbitrary units): $s_0=$\textcolor{black}{$10.0$}, $e_0=1.0$, $k_1=2.0$, $k_2=100.0$ and $k_{-1}=100.0$. 
\textcolor{black}{The estimate $\varepsilon_{\rm opt}$ is not a sharp error estimate due to the choice 
of initial conditions.}}\label{FIGWW}
\end{figure}

\subsubsection{Estimates for the slow dynamics: General case}
Under the hypothesis that $\widetilde t$ and $\widetilde s=s(\widetilde t)$ are known exactly, we obtained upper 
estimates for the approximation error. However, this idealizing assumption does not reflect the real-life 
setting of parameter identification for the reactant stationary approximation. Due to lack of complete information, experimental scientists 
effectively apply the Michaelis--Menten equation with some estimate $s^*$ for $s(\widetilde t)$ valid under
the reactant stationary approximation conditions~\cite{rsa}. This discrepancy must be accounted for by an additional term in the error 
estimate. Define $\xi$ by 
\begin{equation}\label{trueMMeqvar}
\dot \xi=-\dfrac{k_1k_2e_0\xi}{k_{-1}+k_2+k_1\xi}=-\dfrac{k_2e_0\xi}{K_M+\xi},\quad 
    \xi(\widetilde t)=s^*\in (0,\,s_0].
\end{equation}
\begin{proposition}\label{unhappypropvar}
Denote by $s(t)$ the first component of the solution of \eqref{eqmmirrev} with initial value $(s_0,0)$ 
at $t=0$. Moreover, let $\widetilde t\geq t_{\rm cross}$, $\widetilde s:=s(\widetilde t)$. Then, with 
$\underline s$ from \eqref{mmbothestapp} [or from \eqref{mmnwest}], for all $t\geq \widetilde t$, we 
have 
\begin{equation}
    | \xi-\underline{s}|\leq |s^*-\widetilde s|
\end{equation}
and 
\begin{equation}
    |\xi-s|\leq  |s^*-\widetilde s|+ s_0\cdot\varepsilon_{L},
\end{equation}
as well as
\begin{equation}\label{toterreq}
     |\xi-s|\leq  |s^*-\widetilde s|+ s_0\cdot\varepsilon_{W}.
\end{equation}
\end{proposition}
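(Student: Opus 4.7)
\textbf{Proof plan for Proposition~\ref{unhappypropvar}.} The plan is to reduce all three assertions to a comparison between two solutions of the same Michaelis--Menten equation with different initial data, combined with the already-established bounds on $|s-\underline s|$ from Propositions~\ref{penultimatepropone} and \ref{penultimateproptwelve}. So almost all the work has already been done; the only novel content is handling the mismatch between the ``true'' substrate value $\widetilde s$ and the experimentally used initial value $s^\ast$.

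First I would prove the inequality $|\xi-\underline{s}|\leq |s^\ast-\widetilde s|$. Both $\xi$ and $\underline s$ satisfy the same autonomous ODE $\dot y=-k_2e_0 y/(K_M+y)$ on $[\widetilde t,\infty)$; they differ only in their initial value at $t=\widetilde t$. Part (c) of Lemma~\ref{slowestlem} (applied with $a=K_M$, $c=k_2e_0$, $b=0$) gives precisely such a contraction estimate for two solutions of the Michaelis--Menten equation started from different positive initial values. Up to relabeling (whichever of $s^\ast$ and $\widetilde s$ is larger plays the role of $z_0$, the other the role of $x_0$), this yields
\begin{equation*}
    |\xi(t)-\underline s(t)|\leq |s^\ast-\widetilde s|\quad\text{for all }t\geq \widetilde t,
\end{equation*}
with the additional information that the bound decays exponentially in $t-\widetilde t$ at rate $ac/(\max(s^\ast,\widetilde s)+a)^2$, although we do not need the decay here.

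The remaining two inequalities then follow from the triangle inequality
\begin{equation*}
    |\xi-s|\leq |\xi-\underline{s}|+|\underline{s}-s|,
\end{equation*}
combined with the bound just obtained and with the two existing upper estimates for $|\underline s - s|$: Proposition~\ref{penultimatepropone} supplies $|\underline{s}-s|\leq s_0\varepsilon_{L}$, and Proposition~\ref{penultimateproptwelve} supplies $|\underline{s}-s|\leq s_0\varepsilon_{W}$ (using $\underline s\leq s\leq \overline s$ and $\overline s-\underline s\leq s_0\varepsilon_{W}$). Substituting each of these into the triangle inequality produces the two stated bounds.

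The only subtlety — and it is a very mild one — is bookkeeping the sign in the application of Lemma~\ref{slowestlem}(c), since the hypothesis there is stated for $z_0>x_0$ and we do not know a priori whether $s^\ast\geq \widetilde s$ or $s^\ast\leq \widetilde s$. This is handled by treating the two cases symmetrically (or by noting that the monotone flow of a scalar autonomous ODE is order-preserving, so $|\xi-\underline s|$ is non-increasing and in particular bounded by its initial value $|s^\ast-\widetilde s|$). No further estimates are required, so the proof is essentially a two-line argument built on the earlier machinery.
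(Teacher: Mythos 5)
Your proposal is correct and follows exactly the paper's own argument: Lemma~\ref{slowestlem}(c) for the contraction estimate $|\xi-\underline s|\leq|s^*-\widetilde s|$, then the triangle inequality combined with the bounds $|s-\underline s|\leq s_0\varepsilon_L$ and $|s-\underline s|\leq s_0\varepsilon_W$ from Propositions~\ref{penultimatepropone} and \ref{penultimateproptwelve}. The remark about handling the sign in Lemma~\ref{slowestlem}(c) via order preservation of the scalar flow is a reasonable piece of bookkeeping that the paper leaves implicit.
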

\begin{proof}
For the first inequality use {\bf Lemma~\ref{slowestlem}(c)}. For the second, note that
\[
   |\xi-s|   \leq |\xi-\underline{s}|+|\underline{s}-s|
      \leq|s^*-\widetilde s|+ |\underline{s}-s|
\]
and use {\bf Propositions~\ref{penultimatepropone}} and {\bf\ref{penultimateproptwelve}}, respectively.
\end{proof}

\begin{remark}{\em 
We should make the following observations for the above proposition:
\begin{enumerate}[(a)]
\item {\bf Lemma \ref{slowestlem}} includes an exponentially decaying factor for the first term 
in the estimate. For practical experimental applications, this might be of little relevance for some
enzyme catalyzed reactions, since in the scenario under consideration here this exponential decay 
will be slow and the initial transient will be fast.
\item The special case of \eqref{trueMMeqvar} with $s^*=s_0$ seems to reflect the implicit 
assumption underlying many experiments, i.e., that there is no discernible loss in the transitory 
phase before the starting time $\widetilde t$ for measurements.
\end{enumerate}
}
\end{remark}

With the obvious (and to some extent controllable) choice $\widetilde t=t_u^\dagger(q)$, we obtain 
with {\bf Proposition \ref{stuplem}}:
\begin{corollary}\label{fullerrcor}
Let $0<q<1$ and let $\varepsilon_{SSl}$ satisfy the hypotheses of {\bf Proposition \ref{uppertimeprop}}. 
Then, for all $t\geq t_u^\dagger(q)$, one has
\begin{equation}
\begin{array}{rcl}
\dfrac{|\xi-s|}{s_0}&\leq& \varepsilon_{SSl}\cdot\dfrac{1}{q}
    \log\left(1+\cfrac{1}{q}\dfrac{k_1(K_M+s_0)^2}{k_2K_M}\dfrac{1}{\varepsilon_{SSl}}\right) 
        + \varepsilon_{L};\\
\dfrac{|\xi-s|}{s_0}&\leq& \varepsilon_{SSl}\cdot\dfrac{1}{q}
    \log\left(1+\cfrac{1}{q}\dfrac{k_1(K_M+s_0)^2}{k_2K_M}\dfrac{1}{\varepsilon_{SSl}}\right) 
        + \varepsilon_{W}.\\
\end{array}
\end{equation}
\end{corollary}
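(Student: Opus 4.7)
The plan is to assemble the corollary from three building blocks already established in the preceding propositions. First I would take $\widetilde t := t_u^\dagger(q)$ and $s^* := s_0$ in Proposition \ref{unhappypropvar}, so that $\xi$ satisfies the Michaelis--Menten equation \eqref{trueMMeqvar} with $\xi(t_u^\dagger(q)) = s_0$. This choice reflects the experimental convention of initializing the reduced equation at the nominal starting concentration, which is exactly the scenario highlighted in the remark following Proposition \ref{unhappypropvar}.

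Next I would verify that the various hypotheses chain together so that all three propositions are applicable. Under the assumptions inherited from Proposition \ref{uppertimeprop}, we get $t_{\rm cross}\leq t_u^\dagger(q)$, so $\widetilde t\geq t_{\rm cross}$ as required by Proposition \ref{unhappypropvar}. I would also check the side condition in Proposition \ref{stuplem}, namely
\begin{equation*}
\varepsilon_{SSl}\log\!\left(1+\frac{C^*}{q\,\varepsilon_{SSl}}\right)<q.
\end{equation*}
This follows from the sharper bound $\varepsilon_{SSl}\log(1+C^*/(q\,\varepsilon_{SSl}))\leq q\log(1/q)$ that was already established inside the proof of Proposition \ref{uppertimeprop} (see \eqref{tustep2point5}), together with the observation that $\log(1/q)<1$ whenever $q\geq 1/2>1/e$. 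Thus no further smallness assumption on $\varepsilon_{SSl}$ is needed.

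With these hypotheses in place, Proposition \ref{stuplem} gives
\begin{equation*}
\frac{s_0-s(t_u^\dagger(q))}{s_0}\leq \frac{1}{q}\varepsilon_{SSl}\log\!\left(1+\frac{C^*}{q}\frac{1}{\varepsilon_{SSl}}\right),
\end{equation*}
and substituting $C^*=k_1(K_M+s_0)^2/(k_2K_M)$ produces precisely the first summand displayed in the corollary. Since $s$ is monotone decreasing and $s^*=s_0$, the absolute value $|s^*-\widetilde s|$ equals $s_0-s(t_u^\dagger(q))$, so this bound controls the first term on the right in Proposition \ref{unhappypropvar}. Invoking Proposition \ref{unhappypropvar} once with the $\varepsilon_L$ estimate from Proposition \ref{penultimatepropone} and once with the $\varepsilon_W$ estimate \eqref{toterreq} from Proposition \ref{penultimateproptwelve}, then dividing through by $s_0$, yields the two asserted inequalities.

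The derivation is essentially bookkeeping; the only mildly subtle point is the verification that the hypotheses of Proposition \ref{uppertimeprop} already imply the side condition of Proposition \ref{stuplem}, which is what makes the corollary a clean consequence rather than requiring additional assumptions. No other genuine obstacle arises, since $\varepsilon_L$ and $\varepsilon_W$ were engineered precisely to be valid uniformly for $t\geq \widetilde t\geq t_{\rm cross}$.
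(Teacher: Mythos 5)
Your proposal is correct and follows the same route the paper intends: the corollary is obtained by taking $\widetilde t=t_u^\dagger(q)$ and $s^*=s_0$ in Proposition~\ref{unhappypropvar}, bounding $|s_0-s(t_u^\dagger(q))|/s_0$ via \eqref{redinitvalest} of Proposition~\ref{stuplem}, and invoking the $\varepsilon_L$ and $\varepsilon_W$ estimates. Your explicit verification that the side condition of Proposition~\ref{stuplem} follows from \eqref{tustep2point5} together with $q\log(1/q)<q$ is a detail the paper leaves implicit, and it is carried out correctly.
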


\subsubsection{Assuming the standard quasi-steady-state approximation starts at $t=0$}\label{333}
In experiments, it is generally assumed that the substrate concentration does not change during
the initial fast transient. Here we consider a different scenario.  We assume that sQSSA is
applicable from $t=0$. This reflects a widely used scenario in the literature, where one considers the 
reduced Michaelis--Menten equation with initial value $s_0$ at $t=0$ (see, the usual choice 
of initial value for \eqref{classmmeq} in the literature), and compares its solution to the 
true solution. This choice is compatible with the perspective of singular perturbation 
theory, because the relevant solution of \eqref{eqmmirrev} starts on the critical manifold 
with $c=0$. Experimentally it is not an unreasonable approximation, particularly for 
fast-acting enzymes, like carbonic anhydrase. 

We will show for this scenario the approximation error is bounded by a term of order 
$\varepsilon_{SSl}$. More precisely:

\begin{proposition}\label{epsorderprop}
Let $z(t)$ satisfy
\begin{equation*}
\dot{z} = -\cfrac{k_2e_0z}{K_M+z}, \quad z(0)=s_0,
\end{equation*}
and denote by $s(t)$ the first component of the solution of \eqref{eqmmirrev} with initial value $(s_0,0)$ at $t=0$. 
\begin{enumerate}[(a)]
\item Then, for all $t$ with $0\leq t\leq t_{\rm cross}$, one has $z(t)\geq s(t)$ and 
\begin{equation}\label{normERfin}
\cfrac{z-s}{s_0} \leq \varepsilon_{SSl}\cdot 
    \bigg(\cfrac{s_0+K_S}{s_{\rm cross}+K_M}\bigg)\exp \left(k_1s_0\varepsilon_{SSl}t_{\rm cross}\right).
\end{equation}
\item Let $0<q<1$ and $\varepsilon_{SSl}$ satisfy the hypotheses of {\bf Proposition~\ref{uppertimeprop}}.
Then, for all $t$ with $0\leq t\leq t_{\rm cross}$, one has
\begin{equation}\label{normERfinup}
\cfrac{z-s}{s_0} \leq \varepsilon_{SSl}\cdot 
    \cfrac{1}{q}\bigg(\cfrac{s_0+K_S}{s_0+K_M}\bigg)\exp \left(\frac1q k_1s_0t_{SSl}
        \cdot\varepsilon_{SSl}\log\left(1+\dfrac{1}{\varepsilon_{SSl}}\dfrac{C^*}{q}\right)\right),
\end{equation}
with $C^*$ from equation \eqref{tupsteppoint5}.
\end{enumerate}
\end{proposition}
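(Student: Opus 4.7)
The plan is to treat part (a) as a bound on $w:=z-s$ obtained by Gronwall, and deduce part (b) by plugging in the upper estimate for $t_{\rm cross}$ and lower estimate for $s_{\rm cross}$ from \textbf{Proposition~\ref{uppertimeprop}}. First I would verify $z(t)\geq s(t)$ on $[0,t_{\rm cross}]$. By \textbf{Lemma~\ref{schahelem}}, $c\leq g(s)$ on this interval, so substituting into the equation for $\dot s$ gives
\begin{equation*}
\dot s \leq -k_1e_0 s + (k_{-1}+k_1 s)g(s) = -\dfrac{k_2 e_0 s}{K_M+s},
\end{equation*}
which is exactly $\dot z$ evaluated at $s=z$. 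Since $s(0)=z(0)=s_0$ and the right-hand side is Lipschitz in $z$, the standard differential-inequality comparison (\cite[\S 9, Theorem 8]{walter}) yields $s(t)\leq z(t)$, hence $w\geq 0$.

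Next I would compute $\dot w$. Using $\dot z=-k_1e_0 z+(k_{-1}+k_1 z)g(z)$, a direct manipulation yields
\begin{equation*}
\dot w = -k_1(e_0-c)\,w + (k_{-1}+k_1 z)(g(z)-c),\qquad w(0)=0.
\end{equation*}
Because $c\leq e_0$ (conservation), the coefficient $k_1(e_0-c)$ is non-negative; writing it as $-k_1e_0+k_1c$ and discarding the non-positive term $-k_1 e_0 w$ gives $\dot w\leq k_1 c\,w + (k_{-1}+k_1 z)(g(z)-c)$. Since $c\leq \widetilde c=\varepsilon_{SSl}s_0$ by \eqref{ctildemmlow}, Gronwall's inequality produces
\begin{equation*}
w(t)\leq \exp\!\left(k_1 s_0 \varepsilon_{SSl}\,t\right)\int_0^{t}(k_{-1}+k_1 z(\tau))(g(z(\tau))-c(\tau))\,d\tau.
\end{equation*}

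The main technical step is to estimate this integral so that it contributes the factor $s_0\varepsilon_{SSl}\cdot(s_0+K_S)/(s_{\rm cross}+K_M)$. My plan is to split $(g(z)-c)=(g(s)-c)+(g(z)-g(s))$ and exploit the identity $(g(s)-c) = \dot c/(k_{-1}+k_2+k_1 s)$ on the transient phase. Using $z\leq s_0$ to bound $k_{-1}+k_1 z\leq k_1(K_S+s_0)$, and using the monotonicity $s\geq s_{\rm cross}$ for $\tau\leq t_{\rm cross}$ to bound $k_{-1}+k_2+k_1 s\geq k_1(K_M+s_{\rm cross})$, the first piece integrates to at most
\begin{equation*}
\dfrac{K_S+s_0}{K_M+s_{\rm cross}}\cdot c(t_{\rm cross})=\dfrac{K_S+s_0}{K_M+s_{\rm cross}}\cdot g(s_{\rm cross}),
\end{equation*}
which after simplification matches the desired factor $s_0\varepsilon_{SSl}\cdot(s_0+K_S)/(s_{\rm cross}+K_M)$. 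The residual piece $(g(z)-g(s))=e_0 K_M w/[(K_M+s)(K_M+z)]$ contains $w$ itself; I expect to absorb it either by a refined Gronwall step using the same monotonicity bounds $s,z\geq s_{\rm cross}$ and $z\leq s_0$, or by combining it with the coefficient already carried in the exponential. This absorption is the main obstacle, since the goal is to match precisely the form $(s_0+K_S)/(s_{\rm cross}+K_M)$ without extraneous multiplicative constants.

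For part (b), I would simply combine part (a) with \textbf{Proposition~\ref{uppertimeprop}}, which yields $t_{\rm cross}\leq t_u^\dagger(q)$ and $s_{\rm cross}\geq s(t_u^\dagger(q))\geq q s_0$. The latter implies $s_{\rm cross}+K_M\geq q s_0+K_M\geq q(s_0+K_M)$, and therefore
\begin{equation*}
\dfrac{s_0+K_S}{s_{\rm cross}+K_M}\leq \dfrac{1}{q}\cdot\dfrac{s_0+K_S}{s_0+K_M}.
\end{equation*}
Substituting the explicit expression for $t_u^\dagger(q)$ from \eqref{tupdagger} into the exponent of the bound in (a) produces \eqref{normERfinup}.
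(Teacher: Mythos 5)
Your overall strategy is sound and close to the paper's: the comparison argument for $z\geq s$, the decomposition of $\tfrac{d}{dt}(z-s)$ into a homogeneous part and a forcing term controlled by $g(s)-c$, and the reduction of part (b) to Proposition~\ref{uppertimeprop} via $t_{\rm cross}\leq t_u^\dagger(q)$ and $s_{\rm cross}\geq qs_0$ all match. Your treatment of the first piece of the forcing term is in fact a pleasant streamlining: integrating $(k_{-1}+k_1z)(g(s)-c)=\tfrac{k_{-1}+k_1z}{k_{-1}+k_2+k_1s}\,\dot c$ against $\dot c\geq 0$ and using $c(t)\leq g(s_{\rm cross})\leq \varepsilon_{SSl}s_0$ delivers the prefactor $\varepsilon_{SSl}s_0(s_0+K_S)/(s_{\rm cross}+K_M)$ directly, whereas the paper gets there by bounding $-L$ by a decaying exponential and solving a linear ODE.

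However, the step you flag as "the main obstacle" is a genuine gap, and the way you have set things up blocks the cleanest resolution. By discarding $-k_1e_0w$ at the outset (keeping only $k_1cw$), you have thrown away exactly the term needed to absorb the residual piece. Note that
\begin{equation*}
(k_{-1}+k_1z)\big(g(z)-g(s)\big)=k_1(K_S+z)\cdot\frac{e_0K_M\,w}{(K_M+s)(K_M+z)}\leq k_1e_0\,w,
\end{equation*}
since $K_S+z\leq K_M+z$ and $K_M\leq K_M+s$. If you keep the full coefficient $-k_1(e_0-c)w$, this contribution cancels against $-k_1e_0w$ and leaves precisely $k_1cw\leq k_1\varepsilon_{SSl}s_0\,w$ --- the Gronwall coefficient you already carry --- so the exponential factor remains $\exp(k_1s_0\varepsilon_{SSl}t)$ and \eqref{normERfin} follows with the exact stated constants. (This is the second of the two options you mention, and it is what the paper does.) As written, your version would force the extra $k_1e_0w$ into the Gronwall exponent, yielding $\exp\big(k_1(e_0+\varepsilon_{SSl}s_0)t\big)$ and thus a strictly weaker bound than the one claimed. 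With that one reordering, your proof closes; part (b) is then identical to the paper's.
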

\begin{proof}
Let 
\begin{equation*}
    f(s,c)=: -k_1(e_0-c)s+k_{-1}c,
\end{equation*}
and recall that 
\begin{equation*}
f(s,\,g(s))=-\dfrac{k_2e_0s}{K_M+s},
\end{equation*}
with  $g(s)=e_0s/(K_M+s)$ defined in \eqref{classsmeq}.
As in the proof of {\bf Lemma~\ref{slowestlem}}, one finds
\begin{subequations}
\begin{align*}
    g(z)-g(s) &= e_0\bigg(\cfrac{z}{K_M+s}-\cfrac{s}{K_M+z}\bigg)\\
    &= e_0K_M\cdot \cfrac{z-s}{(K_M+s)(K_M+z)}.
    \end{align*}
\end{subequations}
Now, limit the temporal domain to $t\in [0,t_{\rm cross}]$, which implies $L:=c-g(s)\leq 0$ 
by {\bf Lemma~\ref{schahelem}}, and furthermore
\begin{subequations}
    \begin{align*}
        \dot{s} =f(s,c)&= -k_1e_0s + (k_{-1}+k_1s)c,\\
        & \leq -k_1e_0s + (k_{-1}+k_1s)g(s),\\
        &= -\cfrac{k_2e_0s}{K_M+s}.
    \end{align*}
\end{subequations}
Therefore, $ z \geq s$ for $t\leq t_{\rm cross}$ by the usual differential inequality argument.

With $c=g(s)+L$ one now has
\begin{subequations}
    \begin{align*}
       \cfrac{d}{dt}(z-s) &= f(z,g(z))-f(s,g(s)+L)\\
       &= f(z,g(z))-f(s,g(z)) + f(s,g(z))-f(s,g(s)+L)\\
       &=-k_1(e_0-c)(z-s) + k_1(K_S+s)(g(z)-g(s)-L)\\
       & \leq -k_1(e_0-c)(z-s) + k_1(K_S+z)(g(z)-g(s)-L)\\
       &= -k_1(e_0-c)(z-s) - k_1(K_S+z)L + k_1(K_S+z)\cdot 
            \cfrac{e_0K_M\cdot(z-s)}{(K_M+s)(K_M+z)}\\
       &\leq -k_1(e_0-c)(z-s) -k_1(K_S+z)L +k_1e_0(z-s),\\
    \end{align*}
\end{subequations}
which leaves us with:
\begin{equation}
  \cfrac{d}{dt}(z-s)   \leq k_1c(z-s)-k_1(K_S+s_0)L, \quad \text{for all } t \leq t_{\rm cross}.
\end{equation}
Since $c \leq s_0\varepsilon_{SSl}$ by \eqref{ctildemmlow}, this ultimately implies
\begin{equation}
  \cfrac{d}{dt}(z-s) \leq \varepsilon_{SSl}k_1s_0(z-s)-k_1(K_S+s_0)L, 
        \quad \text{for all } t \leq t_{\rm cross}.
\end{equation}
Now, for $t\leq t_{\rm cross}$, one has with $f(s,c)\leq 0$ and $g'(s)\geq0$ and \eqref{eqmmirrev2}:
\begin{equation*}
    \begin{array}{rcl}
    \dfrac{dL}{dt}     &=& k_1(K_M+s)L-g'(s)\cdot f(s,c) \\
         & \geq& k_1(K_M+s)L\geq k_1(K_M+s_{\rm cross})L,
    \end{array}
\end{equation*}
and therefore
\begin{equation*}
    L\geq -\dfrac{e_0s_0}{K_M+s_0}\,\exp\left(k_1(K_M+s_{\rm cross})t\right).
\end{equation*}
Consequently, we obtain:
\begin{equation}\label{ZSineq}
\cfrac{d}{dt}(z-s) \leq \varepsilon_{SSl}k_1s_0\bigg[(z-s)+(K_S+s_0)
        \exp(-k_1(K_M+s_{\rm cross})t)\bigg], \quad \text{for all }t \leq t_{\rm cross}.
\end{equation}
Solving the corresponding linear differential equation yields for all $t\leq t_{\rm cross}$:
\begin{equation}\label{normER}
\begin{array}{rcl}
     \cfrac{z-s}{s_0} &\leq &\varepsilon_{SSl}\cdot \bigg(\cfrac{K_S+s_0}{K_M+s_{\rm cross}
        +\varepsilon_{SSl}s_0}\bigg)\bigg(\exp (k_1\varepsilon_{SSl}s_0t)-\exp( -k_1(K_M+s_{\rm cross})t)\bigg)\\
     & \leq &\varepsilon_{SSl}\cdot \cfrac{K_S+s_0}{K_M+s_{\rm cross}}\exp (k_1\varepsilon_{SSl}s_0t)\\
      & \leq &\varepsilon_{SSl}\cdot \cfrac{K_S+s_0}{K_M+s_{\rm cross}}\exp (k_1\varepsilon_{SSl}s_0t_{\rm cross}),
\end{array}
\end{equation}
which finishes the proof of part (a).

For part (b), we use $t_{\rm cross}\leq t_u^\dagger(q)$ [see, \eqref{tupdagger}], as well as 
$s_{\rm cross} \geq qs_0$ due to {\bf Lemma \ref{uppertimelem}}.
\end{proof}

Numerical results confirm that (\ref{normER}) yields a rather sharp bound on the normalized 
error accumulated as the phase-plane trajectory approaches the QSS 
manifold (see, {{\sc Figure}}~\ref{FIGXX}).
\begin{remark} {\em 
The following observations should be made about our results:
\begin{enumerate}[(a)]
\item Keeping only the lowest order term in \eqref{normERfinup}, one has for 
$0\leq t\leq t_{\rm cross}$ that
\begin{equation}\label{estimate}
\cfrac{z-s}{s_0} \sim \varepsilon_{SSl}\cdot
    \frac1q \bigg(\cfrac{K_S+s_0}{K_M+s_0}\bigg)+o(\varepsilon_{SSl})
        =:\frac1q\varepsilon_{opt}+o(\varepsilon_{SSl})
\end{equation}
with 
$\varepsilon_{opt}$ defined in \eqref{optimalguess}.
\item Numerical simulations confirm that (\ref{estimate}) is a reliable estimation of the 
normalized error between $z$ and $s$ when $t\leq t_{\rm cross}$ (see, {{\sc Figure}}~\ref{FIGYY}).
\item With {\bf Propositions~\ref{penultimatepropeleven}}, {\bf \ref{penultimateproptwelve}} 
and {\bf \ref{unhappypropvar}} one sees that $|z-s|/s_0$ is of order $\varepsilon_{SSl}$ over 
the whole time range. Numerical simulations suggest that $\varepsilon_{opt}$ is a global upper 
bound (see, {{\sc Figures}}~\ref{FIGYY} and \ref{FIGXXZ}).
\end{enumerate}
}
\end{remark}
\begin{figure}[htb!]
\centering
\includegraphics[width=9.0cm]{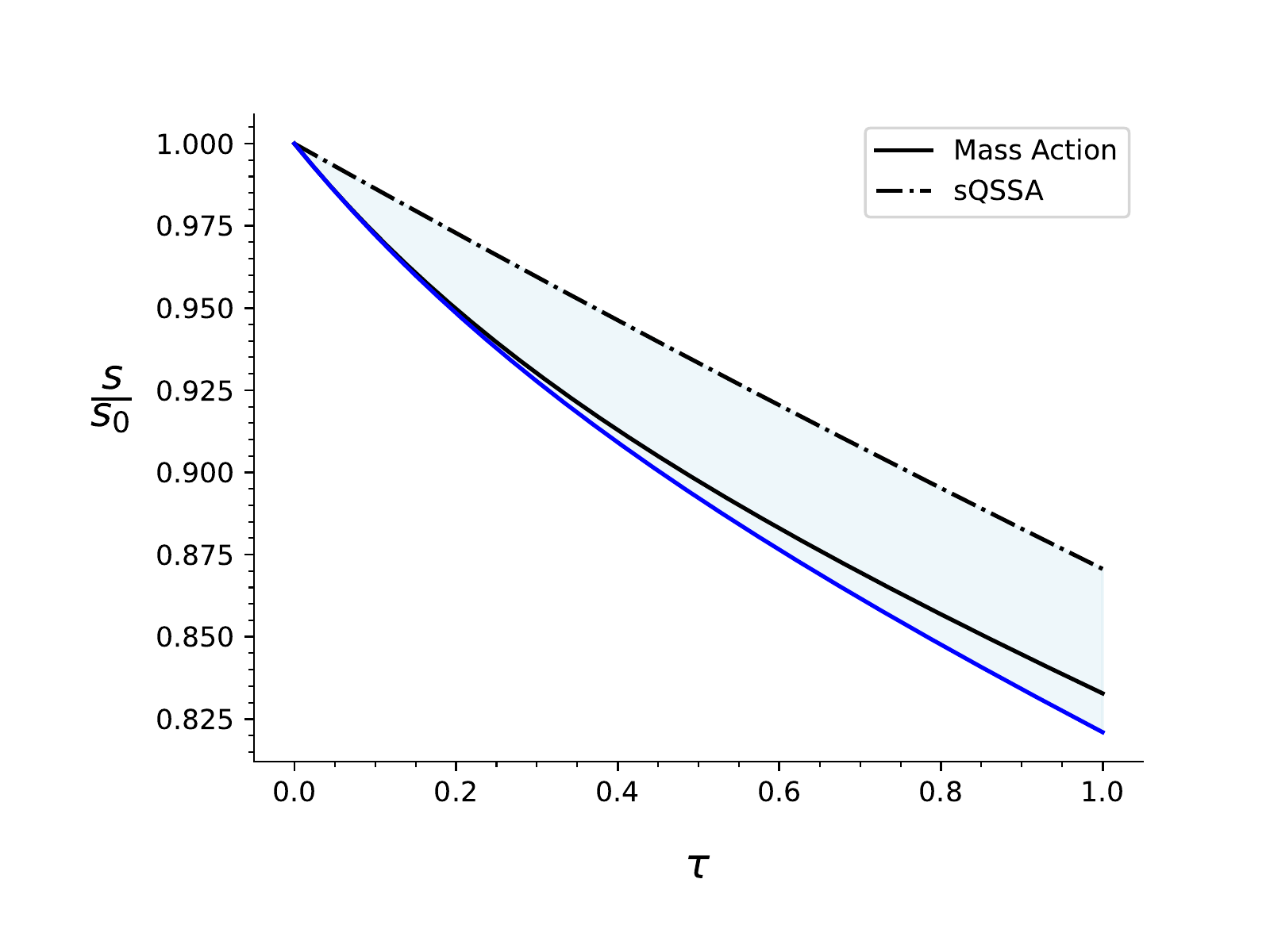}\\
\includegraphics[width=9.0cm]{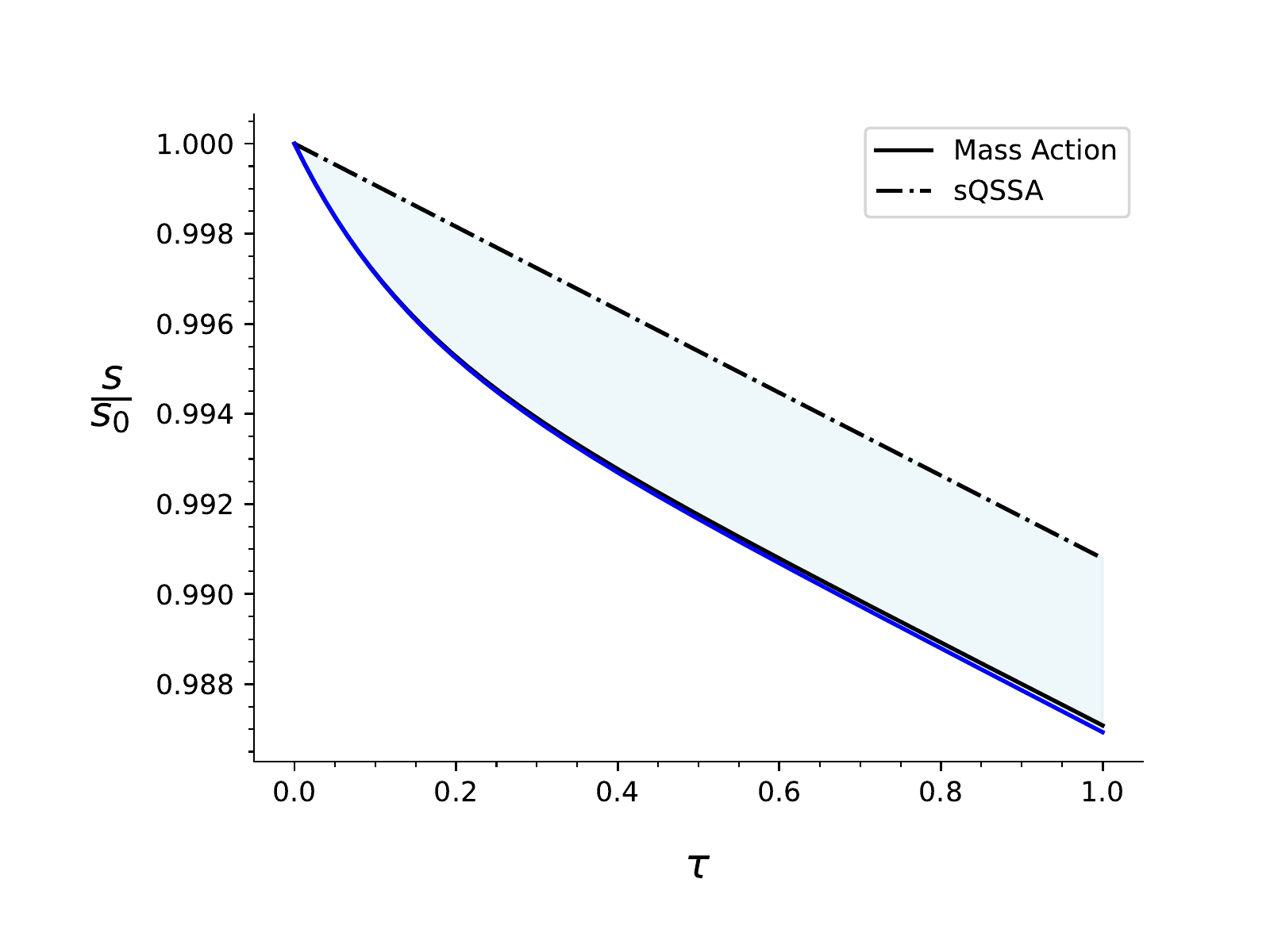}
\caption{\textbf{Numerical simulations confirm that (\ref{normER}) provides a sharp bound on 
the normalized error between the $s$-component of the mass action equations and the sQSSA.} 
In both panels, the black curve is the numerical solution the mass action equations. The 
dashed/dotted curve is the numerical solution to the sQSSA. \textcolor{black}{The admissible 
region given by the error bound (\ref{normER}) is shaded in blue.} The blue line is 
the (normalized) numerical solution to the right-hand side of (\ref{normER}) with 
numerically-estimated (a priori) $q$ that corresponds to the upper boundary of (\ref{normER}). 
Time has been rescaled by $\tau = t/t_{\rm cross}$, where $t_{\rm cross}$ has been 
numerically-estimated. {{\sc Top}}: The parameters used in the simulation are (in arbitrary 
units): $s_0=10.0$, $e_0=10.0$, $k_1=2.0$, $k_2=100.0$ and $k_{-1}=100.0$. {{\sc Bottom}}: The 
parameters used in the simulation are (in arbitrary units): $s_0=100.0$, $e_0=1.0$, $k_1=2.0$,
$k_2=100.0$ and $k_{-1}=100.0$.
 }\label{FIGXX}
\end{figure}
\begin{figure}[htb!]
\centering
\includegraphics[width=8.0cm]{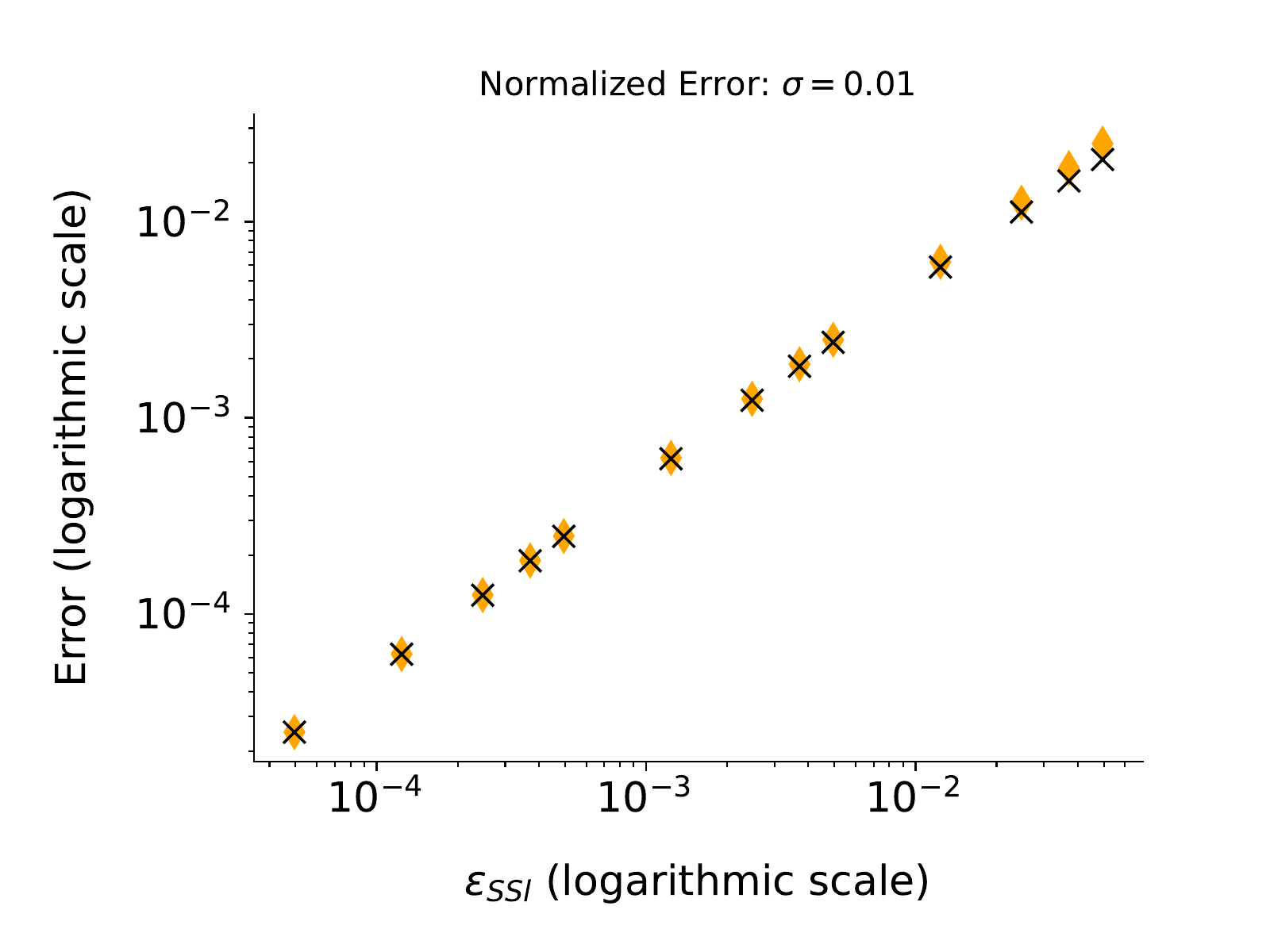}
\includegraphics[width=8.0cm]{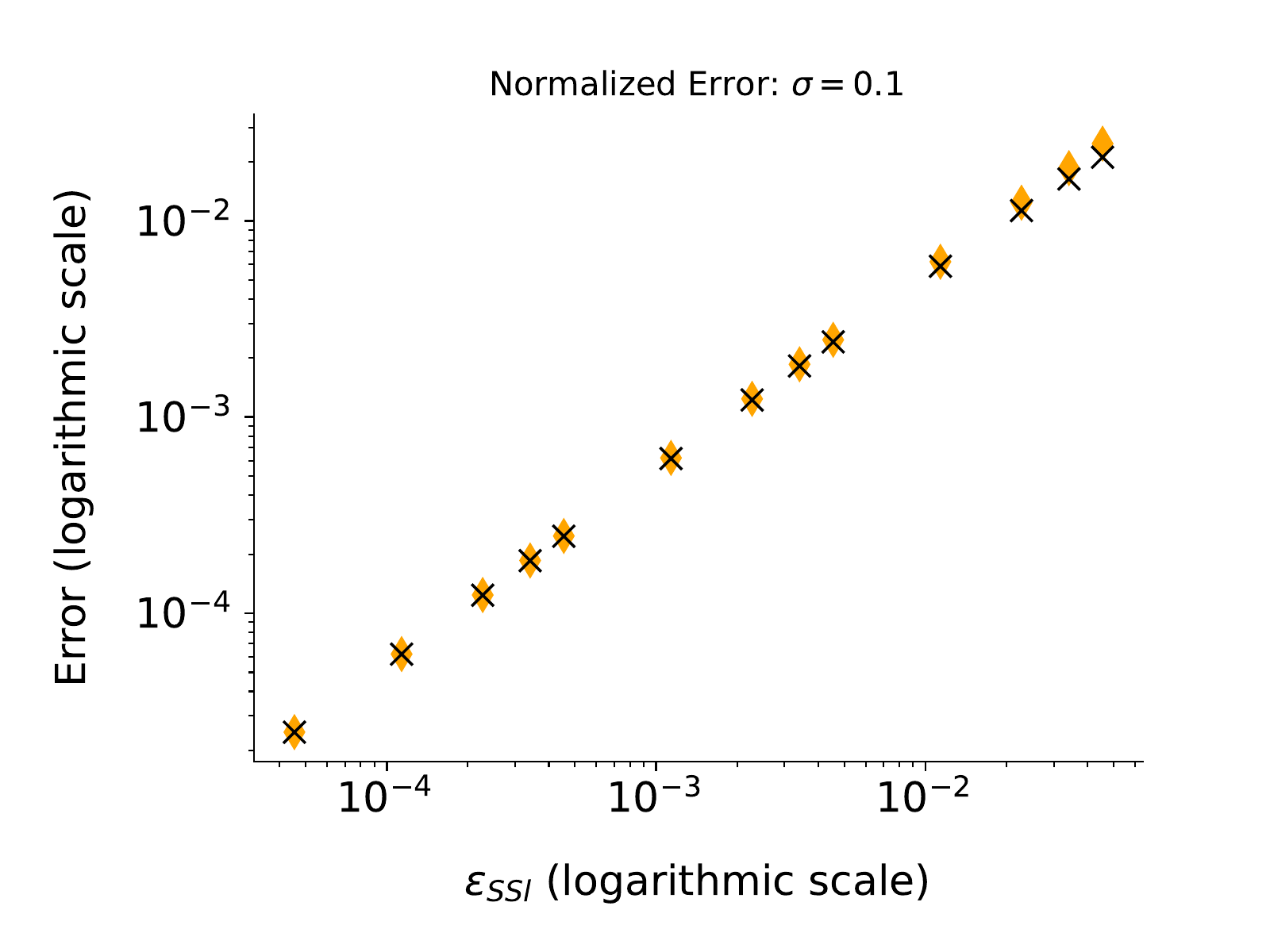}\\
\includegraphics[width=8.0cm]{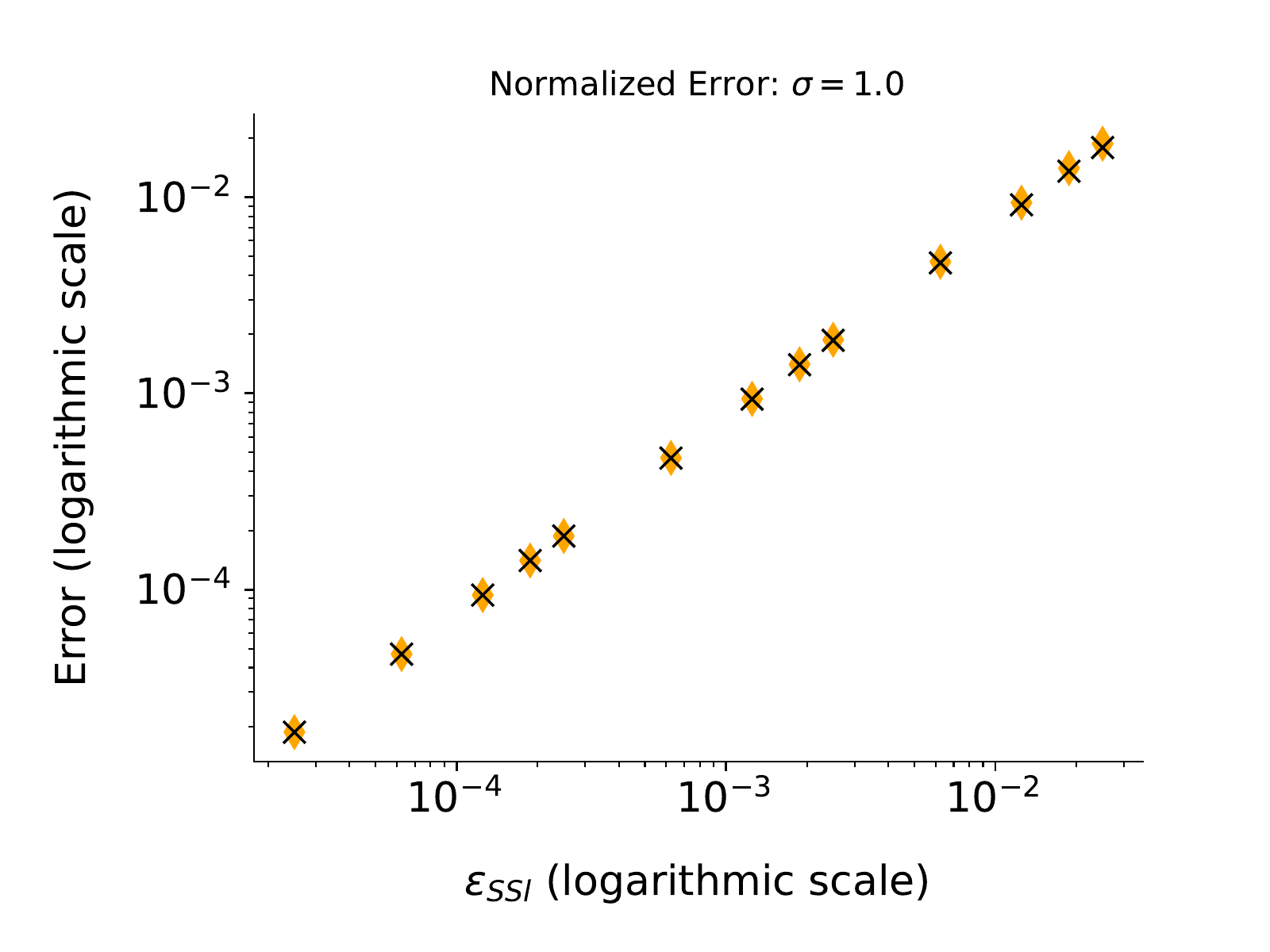}
\includegraphics[width=8.0cm]{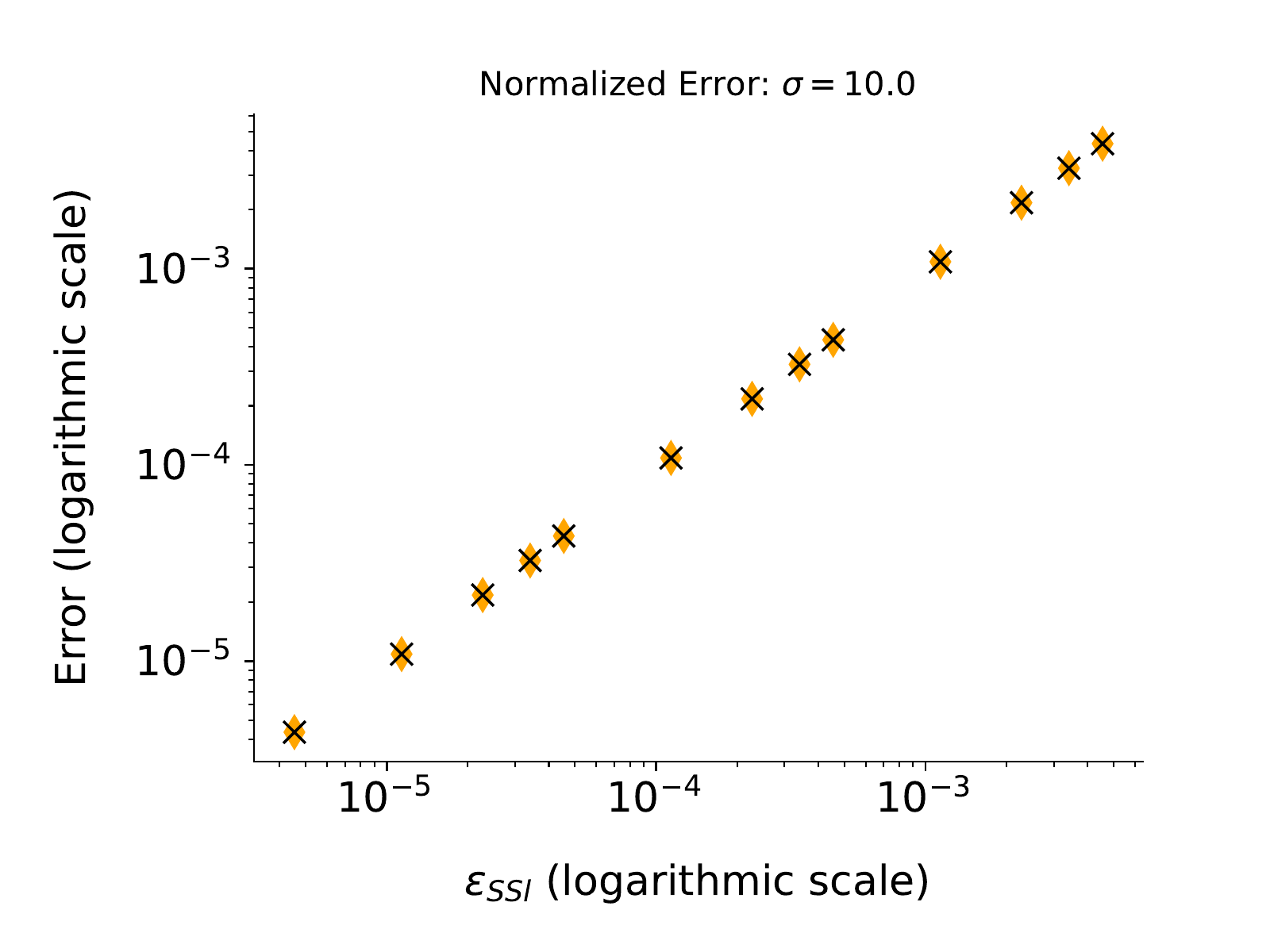}
\caption{\textbf{Numerical simulations confirm that (\ref{optimalguess}) is a reasonable 
estimation of the normalized error when $ t=t_{\rm cross}$ for small $\varepsilon_{SSl}$}. In 
all panels, $e_0 \in [0.025, 0.05, 0.075, 0.1, 0.25, 0.5, 0.75, 1.0, 2.5, 5.0, 7.5, 10]$, 
$k_1=1.0,$  $k_2=k_{-1}=100.0$, thus $K_M=200$, and $\sigma=s_0/K_M$. The solid black 
crosses are the numerically--computed normalized error $|z-s|/s_0$ at $t=t_{\rm cross}$.
The orange diamonds correspond to $\varepsilon_{opt}$. {{\sc Top Left:}} $s_0 =2.0$. 
{{\sc Top Right:}} $s_0=20.0$. {{\sc Bottom Left:}} $s_0=200.0$. {{\sc Bottom Right:}} 
$s_0=2000.0$. 
}\label{FIGYY}
\end{figure}

\begin{remark}{\em 
The distinguishing difference between $\varepsilon_{SSl}$ and (\ref{estimate}) is the 
appearance of the dimensionless factor
\begin{equation*}
  \eta =: \bigg(\cfrac{K_S+s_0}{K_M+s_0}\bigg).
\end{equation*}
Recall that the specificity constant \cite{specificity}, $\Theta$, is defined as
\begin{equation}\label{Ef}
    \Theta =: \cfrac{k_2}{K_M} \leq k_1.
\end{equation}
From (\ref{Ef}) we can define the \textit{normalized} specificity constant, 
$\bar{\Theta}=:\Theta/k_1$. Expressing $\eta$ in terms of $\bar{\Theta}$ and setting 
$\sigma:=s_0/K_M$ yields
\begin{equation}
    \eta = \cfrac{1+\sigma - \bar{\Theta}}{1+ \sigma},
\end{equation}
and we conclude that (\ref{estimate}) will be much smaller than $\varepsilon_{SSl}$ 
whenever $\bar{\Theta} \approx 1+ \sigma$, which implies that $\sigma \ll 1$ and 
$\bar{\Theta}$ is close to $1$. This scenario can be useful in the study of functional 
effects of enzyme mutations \cite{mutant}. Numerical simulations confirm that the 
normalized error may be far less than $\varepsilon_{SSl}$ when $\eta \ll 1$ 
(see, {{\sc Figure}} \ref{FIGZZ}).}
\end{remark}

\begin{figure}[htb!]
\centering
\includegraphics[width=9.0cm]{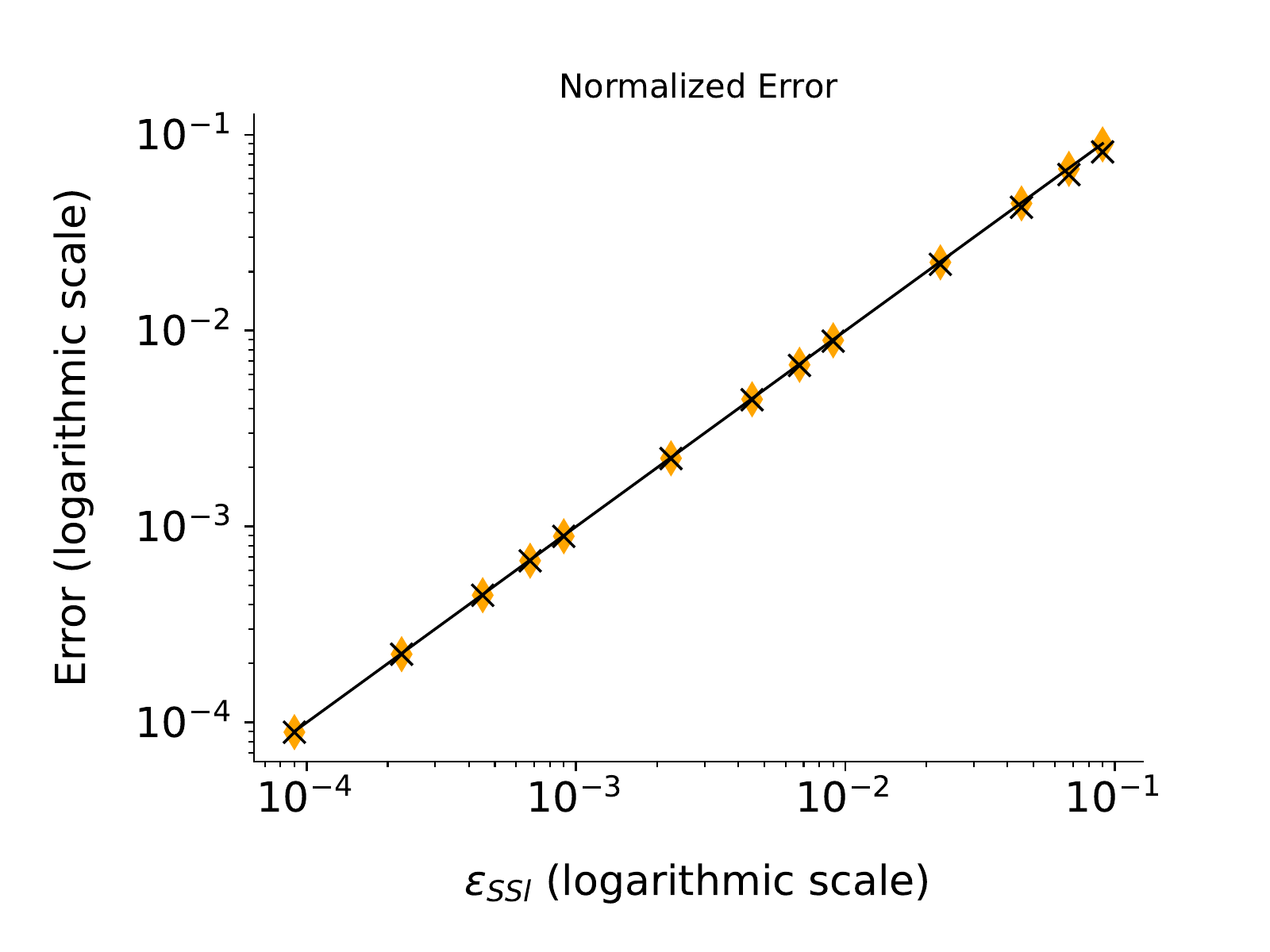}\\
\includegraphics[width=9.0cm]{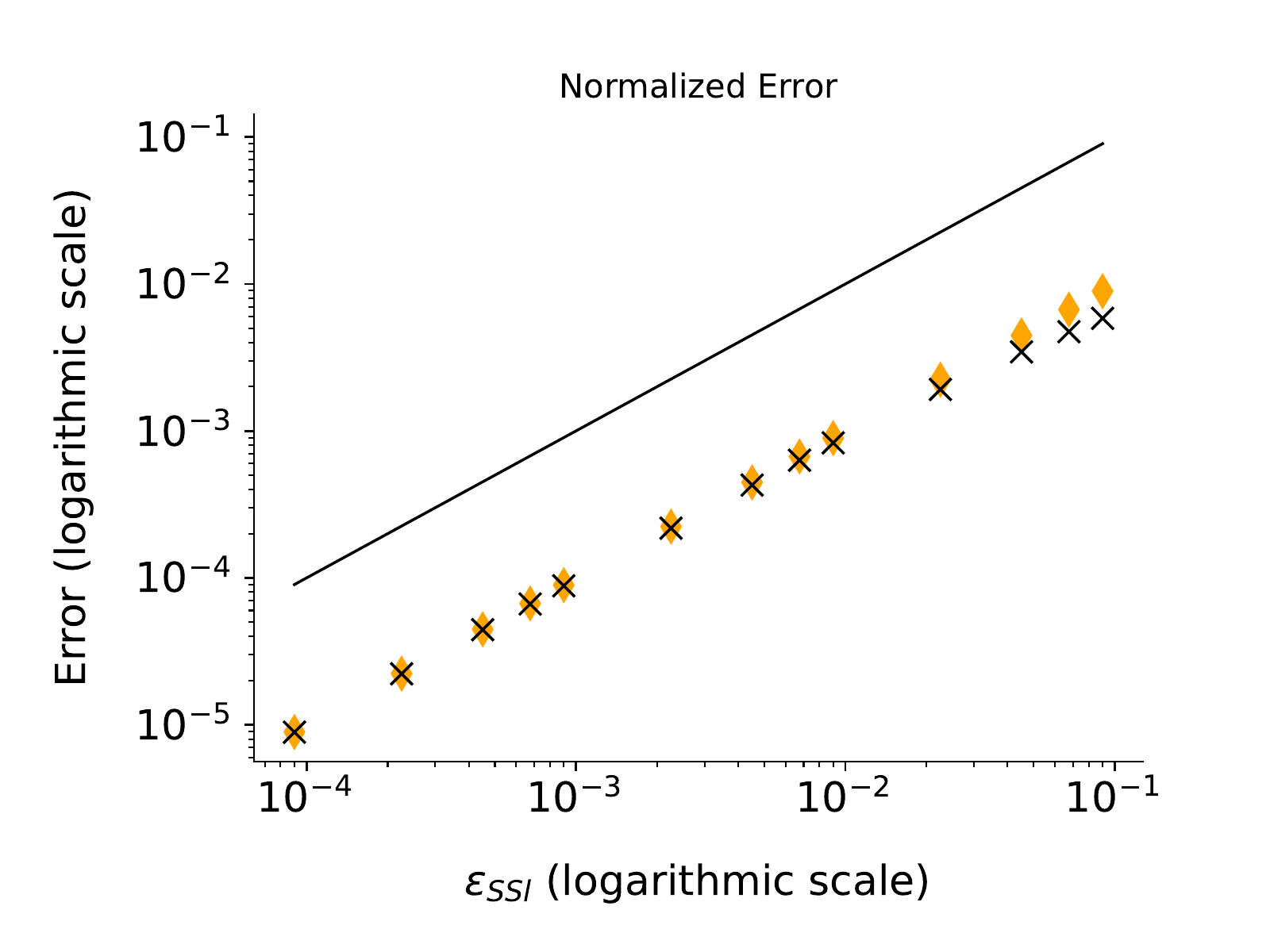}
\caption{\textbf{Numerical simulations confirm that (\ref{optimalguess}) provides a 
sharp bound on the normalized error at $t=t_{\rm cross}$ between the $s$-component of the mass action 
equations and the sQSSA.} In both panels the black line corresponds to 
$\log$(Error) $=\log(\varepsilon_{SSl})$ and 
$e_0 \in [0.025, 0.05, 0.075, 0.1, 0.25, 0.5, 0.75, 1.0, 2.5, 5.0, 7.5, 10]$. The 
orange diamonds correspond to (\ref{estimate}) and the black crosses are the 
numerically-estimated normalized error between $s$ and $z$ at the numerically-estimated 
crossing time. {{\sc Top}}: The parameters used in the simulation are (in arbitrary units): 
$s_0=10.0$, $e_0=1.0$, $k_1=2.0$, $k_2=1.0$ and $k_{-1}=100.0$. {{\sc Bottom}}: The 
parameters used in the simulation are (in arbitrary units): $s_0=10.0$, $e_0=1.0$,
$k_1=2.0$, $k_2=100.0$, and $k_{-1}=1.0$. Note that $\sigma \approx 0.1$ in both 
simulations. In the top panel, $\eta \approx 0.99$ and therefore the normalized error 
is approximately $\varepsilon_{SSl}$. On the other hand, $\eta \approx 0.1$ in the 
bottom panel, and therefore the normalized error at the crossing time is roughly one 
order of magnitude less than $\varepsilon_{SSl}.$}\label{FIGZZ}
\end{figure}

\begin{figure}[htb!]
\centering
\includegraphics[width=10.0cm]{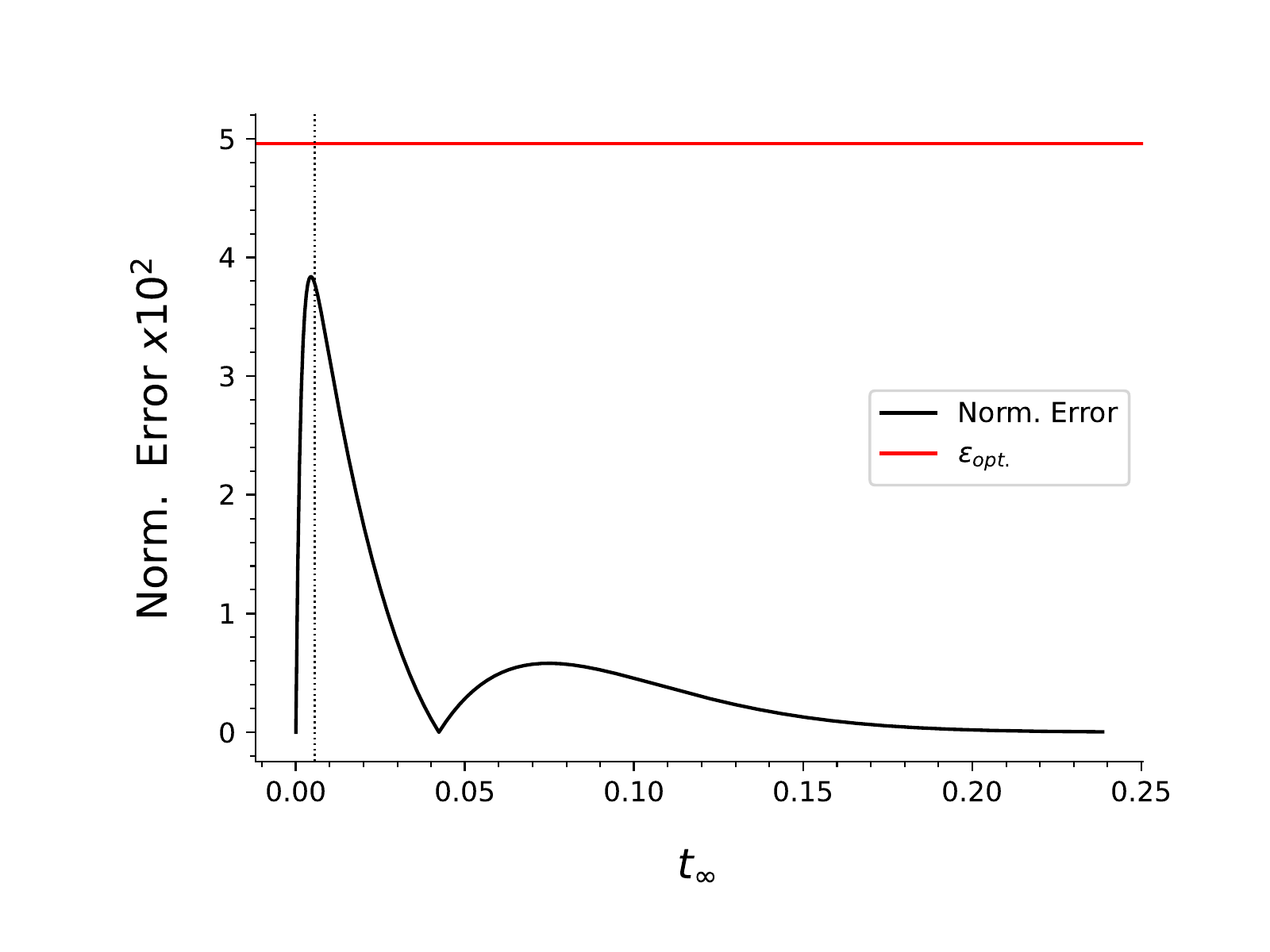}\\
\includegraphics[width=10.0cm]{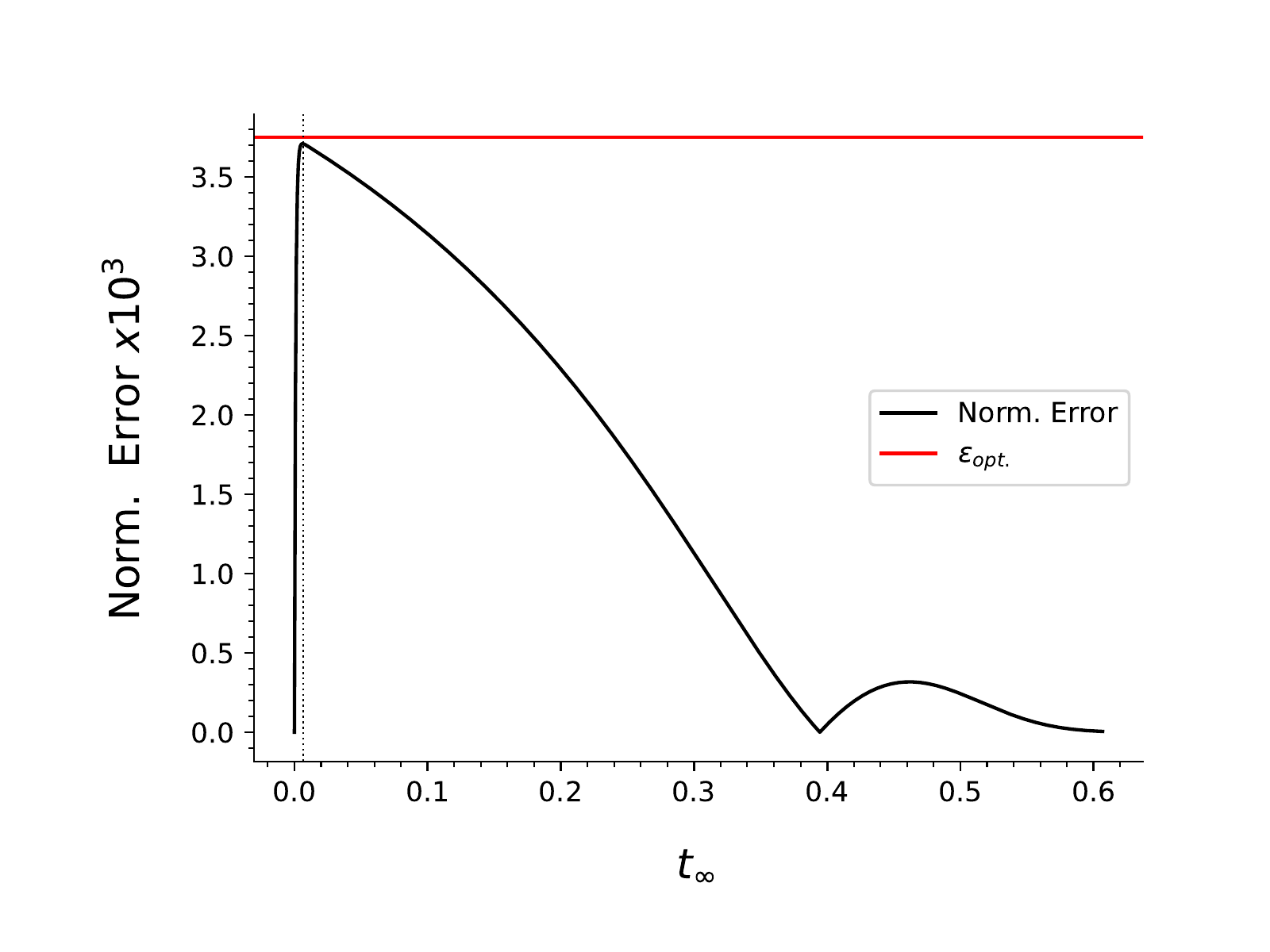}
\caption{\textbf{Assuming reactant stationary approximation from $t=0$: Numerical simulations suggest that (\ref{optimalguess}) 
provides a sharp bound on the normalized error between the $s$-component of the mass action 
equations and the sQSSA for the complete time course.} In both panels, the black curve 
is the normalized absolute error $|s-z|/s_0$. The red line is $\varepsilon_{\rm opt.}$ 
The initial conditions are $(s,c)(0)=(s_0,0)$ and $z(0)=s_0$ and thus correspond to 
the assumption that the reactant stationary approximation is valid at the start of the reaction outlined 
in Section~\ref{333}. The dotted vertical line demarcates the numerically--computed 
$t_{\rm cross}$. Note that $t$ has been mapped to $t_{\infty}=1-1/\log(t+e)$. 
{{\sc Top}}: The parameters used in the simulation are (in arbitrary units): 
$s_0=10.0$, $e_0=10.0$, $k_1=2.0$, $k_2=100.0$ and $k_{-1}=100.0$. 
{{\sc Bottom}}: The parameters used in the simulation are (in arbitrary units): 
$s_0=100.0$, $e_0=1.0$, $k_1=2.0$, $k_2=100.0$ and $k_{-1}=100.0$.
}\label{FIGXXZ}
\end{figure}
\subsection{About the long-time quality of the approximation}
The goal of the present work was to obtain workable upper estimates for the relative approximation error, 
$|(*-s)/s_0|$, where $*$ symbolizes the solution of some reduced equation, that are valid over the whole 
range of the slow dynamics. In their derivation, we deliberately chose simplified estimates which do not 
reflect that substrate concentration approaches $0$ as $t\to\infty$. Notably, in 
{\bf Lemma \ref{slowestlem}} and {\bf Lemma \ref{slowlamestlem}}, we eventually disregarded slowly decaying 
terms which would imply convergence to zero for the approximations. So in these estimates the dynamics is 
not reflected well for very long times. (We recall that the parameters $\varepsilon_{MM}$ and 
$\varepsilon_{RS}$ govern the accuracy of the approximation for very long times; see 
Eilertsen et al.~\cite{Chihuahua}.) Our simplifications are justified since for the intended application -- 
parameter identification -- the time range directly after the onset of the slow dynamics is relevant, 
while the behavior as $t\to\infty$ is of less interest in the experimental setting.

\section{Discussion: A view toward applications} \label{sec:applications}
Experimental enzymologists, and biochemists and analytical chemists may be less interested in 
mathematical technicalities and wish to focus on the essential results. Therefore, we will here 
summarize some essential application-relevant consequences from our theoretical considerations. 
These takeaways will remain technical for experimental scientists, but they are accessible
to mathematical biologists and chemists, who work in close collaboration with experimental 
scientists. We provide quantitative error estimates, which may be relevant 
for a detailed study in application scenarios. In order to present the results without recourse 
to the technical sections, we will accept some redundancy. \textcolor{black}{A quick-reference 
for the parameters defined in this paper can be found in the Appendix.}

\textcolor{black}{Thus, we consider the Michaelis-Menten system \eqref{eqmmirrev} for low initial 
enzyme concentration, so that the quasi-steady state approximation \eqref{classmmeq} holds with 
good accuracy. 
As a distinguished perturbation parameter we choose 
\begin{equation*}
    \varepsilon_{SSl}=\dfrac{e_0}{K_M+s_0},
\end{equation*}
as proposed by Segel and Slemrod \cite{SSl},\footnote{As mentioned in the Introduction, this choice 
is a matter of convenience.} and discuss the limiting case $\varepsilon_{SSl}\to 0$ (in detail, 
$e_0\to 0$ with the other parameters bounded above and below by positive constants).}

\textcolor{black}{
Since $e_0$ and $s_0$ are controllable in a laboratory setting, the standard \textcolor{black}{parameter estimation} will 
provide approximate values for $K_M$ and $k_2$. In turn, this enables an educated guess for 
$\varepsilon_{SSl}$.}

\textcolor{black}{
Our results provide error estimates for the approximation by the Michaelis-Menten 
equation~\eqref{classmmeq}. This may be taken as a vantage point toward error estimates 
for $k_2$ and $K_M$, and thus for consistency checks. Going beyond this, we obtain rigorous 
estimates for the onset time of the QSS regime, and for the substrate depletion during the 
initial transitory phase. This opens an approach to the identification of critical parameters
required for the effective design of steady-state experiments.}

\textcolor{black}{
We will only exhibit the two lowest-order terms in the asymptotic expansions 
with respect to $\varepsilon_{SSl}$, since these are dominant for sufficiently small initial 
enzyme concentration, and then further simplify these terms. The accuracy of 
approximation can --- in any case --- be gauged by a fuller analysis of the results in Section~\ref{estsec}.} 

\subsection{Onset of the slow dynamics}
Generally, via singular perturbation theory one cannot define a fixed time 
for the end of the transitory phase. There always remains some freedom of choice when implementing
a scale. As a definitive (biochemically relevant) time for the onset of the slow dynamics of the 
Michaelis--Menten reaction mechanism, following precedent, we chose the crossing time 
$t_{\rm cross}$, at which complex concentration is maximal. As noted in {\bf Remark~\ref{nossltime}}, 
the familiar time 
\begin{equation*}
    t_{SSl}=\dfrac{1}{k_1(s_0+K_M)}
\end{equation*}
which seems to be suggested by Segel and Slemrod \cite[equation~(12)c]{SSl}, for the duration of 
the transient phase, leads to an underestimate for the asymptotics.  

In this paper, we found:
\begin{itemize}
\item According to \eqref{tlowdag} and \eqref{tlowdagas}, an asymptotic lower estimate for the 
crossing time is given by
\begin{equation*}
t_\ell^*= t_{SSl}\,\left[\log\left(\dfrac{1}{\varepsilon_{SSl}}\right)
                +\log\dfrac{k_{-1}+k_2}{k_2}+\cdots\right]
\end{equation*}
\item According to \eqref{tupdagger} and \eqref{tupdaggeras}, an asymptotic upper estimate for 
the crossing time is given by
\begin{equation*}
t_u^*(q)=  \frac1qt_{SSl}\left[\log\dfrac{1}{\varepsilon_{SSl}}
                +\log\dfrac{k_1(s_0+K_M)^2}{qk_2K_M}+\cdots\right],
\end{equation*}
where $q<1$ is fixed but may be taken arbitrarily close to $1$. To simplify this, we may 
approximate the upper estimate by
\begin{equation*}
t_u^*(1)=  t_{SSl}\left[\log\dfrac{1}{\varepsilon_{SSl}}
                +\log\dfrac{k_1(s_0+K_M)^2}{k_2K_M}+\cdots\right],
\end{equation*}
since by {\bf Lemma~\ref{tuto1}}, we may control the relative error by $1-q$ times a factor 
close to 1.
\end{itemize}
These considerations show that a lowest order approximation of the crossing time, hence of 
the onset time of slow dynamics, is given by
\begin{equation}\label{tcrosssimple}
t_{\rm cross} \approx t^{\ast}:= t_{SSl}\,\log\left(\dfrac{1}{\varepsilon_{SSl}}\right).
\end{equation}

\subsection{Substrate depletion in the transient phase}
Now, we estimate the relative substrate loss at $t_{\rm cross}$, thus 
$\Delta:=\dfrac{s_0-s_{\rm cross}}{s_0}$ to estimate the validity of the \textcolor{black}{reactant 
stationary approximation}. In Section~\ref{estsec}, 
we found:
\begin{itemize}
\item \textcolor{black}{From \eqref{lowscr} and \eqref{lowscras}, one gets an asymptotic lower estimate
\[
\Delta \geq \dfrac{K}{2(s_0+K_M)}\varepsilon_{SSl}\left[\log\dfrac{1}{\varepsilon_{SSl}}
        +\log\dfrac{k_1K_M}{k_2} +\cdots \right],
\]
noting (with {\bf Remark~\ref{badsellrem})} that the factor $\frac12$ could be discarded 
under slightly stricter hypotheses. The factor $K/(s_0+K_M)$ stems from the 
estimate \eqref{notsosimplesest} in {\bf Lemma \ref{simplesestlem}}. As noted in 
{\bf Remark \ref{badsestrem}}, the latter is not optimal.}
\item From \eqref{redinitvalest} and \eqref{redinitvalestas}, one gets an asymptotic 
upper estimate
\[
\Delta\leq  \dfrac1q\varepsilon_{SSl}\left[\log\dfrac{1}{\varepsilon_{SSl}}
        +\log\dfrac{k_1(s_0+K_M)^2}{qk_2K_M}+\cdots \right],
 \]
 with $q<1$ but arbitrarily close to $1$.
\end{itemize}
With similar arguments as those in {\bf Lemma \ref{tuto1}}, one sees that replacing 
$\Delta$ by
\begin{equation}
\dfrac{s_0-s_{\rm cross}}{s_0}\approx \Delta^*:= 
    \varepsilon_{SSl}\left[\log\dfrac{1}{\varepsilon_{SSl}}
        +\log\dfrac{k_1(s_0+K_M)^2}{k_2K_M}+\cdots\right]
\end{equation}
involves a relative error equal to $1-q$ times a factor close to $1$. From the derivations 
via differential inequalities, there remains a gap between $\Delta^*$ and the lower estimate 
$K/(s_0+K_M)\Delta^*$. Here, we resort to a heuristic argument. Given the accuracy of lower 
and upper estimates in {\bf Lemma \ref{simplesestlem}} [note {\bf Remark \ref{badsestrem}}], 
it seems preferable to choose $\Delta^*$ as the appropriate approximation. 
\textcolor{black}{(This choice is also supported by extensive numerical simulations.)} Keeping only the 
lowest order term, we note the approximation
\begin{equation}\label{scrossimple}
  \dfrac{s_0-s_{\rm cross}}{s_0}\approx \Delta^{**}
        :=\varepsilon_{SSl}\log\dfrac{1}{\varepsilon_{SSl}}
\end{equation}
which depends only on the Segel--Slemrod parameter.

\textcolor{black}{
An educated guess for $s_{\rm cross}$ can be obtained based on $\varepsilon_{SSl}$. If progress 
curves are carried out in the laboratory, this opens a way to determine an approximation to the 
crossing time $t_{\rm cross}$, and further on $t_{SSl}$ and the reaction parameter $k_1$ as well 
as $k_{-1}$.}

\subsection{The approximation error assuming no transient substrate loss}
\textcolor{black}{
The approximation of the $s$ component of the solution of \eqref{eqmmirrev} by the solution of 
\eqref{classmmeq} (after the transient phase) is correct only up to some error, and we determined 
rigorous bounds for this error. In turn, this information may be used toward estimating the errors 
in the determination of $K_M$ and $v_\infty$ from experimental data. First, we consider} 
the scenario assuming no loss of substrate in the transient phase: 
$s(0\leq t\leq t_{\rm cross})=s_0$. This is considered the standard reactant 
stationary approximation scenario in
enzyme kinetics. Allowing for somewhat weaker estimates by taking the limiting 
case with $q=1$ and discarding higher order terms as $e_0\to 0$, we arrive at 
``ultimate small parameters'' for estimating the approximation error. The first 
step yields, depending on {\bf Proposition~\ref{penultimatepropone}} or 
{\bf Proposition~\ref{penultimateproptwelve}}, respectively:
\begin{align}
\varepsilon^\dagger_L &:= \varepsilon_{SSl}
    \left(\log\left(\dfrac{k_1(K_M+s_0)^2}{k_2K_M}\dfrac{1}{\varepsilon_{SSl}}\right) 
        +\dfrac{(K_M+s_0)^2(K_S+s_0)}{K_M^3}\right), \\
{\rm or} \nonumber \\
\varepsilon^\dagger_M &:= \varepsilon_{SSl}
    \left(\log\left(\dfrac{k_1(K_M+s_0)^2}{k_2K_M}\dfrac{1}{\varepsilon_{SSl}}\right) 
        +\exp\left(\frac{s_0}{K_M}-1\right)\dfrac{(K_M+s_0)}{K_M}\right).
\end{align}
In a second step, we keep only lowest order terms in the asymptotics of $\varepsilon^\dagger$. With
\begin{equation*}
\log\left(\dfrac{k_1(K_M+s_0)^2}{k_2K_M}\dfrac{1}{\varepsilon_{SSl}}\right)
        =\log\left(\dfrac{k_1(K_M+s_0)^2}{k_2K_M}\right)
                +\log\left(\dfrac{1}{\varepsilon_{SSl}}\right)
\end{equation*}
we ultimately obtain
\begin{equation}\label{estsimple}
    \varepsilon^{\ddagger}=\varepsilon_{SSl}\log\left(\dfrac{1}{\varepsilon_{SSl}}\right).
\end{equation}
Remarkably, in the asymptotic limit the error due to substrate depletion in the transitory 
phase (which is responsible for the logarithmic term) is dominant. 

\subsection{The approximation error assuming standard quasi-steady-state approximation starts at $t=0$}
\textcolor{black}{
In {\bf Proposition \ref{epsorderprop}}, from this assumption we obtained an asymptotic error 
estimate~\eqref{estimate} that is of order $\varepsilon_{SSl}$. }
Combining this with {\bf Proposition~\ref{penultimatepropone}}, resp.\ 
{\bf Proposition~\ref{penultimateproptwelve}}, and keeping only lowest order terms, we obtain 
\begin{align}
\varepsilon^{\S}_L &:= \varepsilon_{SSl}\left(\dfrac{K_S+s_0}{K_M+s_0}
    +\dfrac{(K_M+s_0)^2(K_S+s_0)}{K_M^3}\right), \\
{\rm or} \nonumber \\
\varepsilon^{\S}_M &:= \varepsilon_{SSl}\left(\dfrac{K_S+s_0}{K_M+s_0}
    +\exp\left(\frac{s_0}{K_M}-1\right)\dfrac{(K_M+s_0)}{K_M}\right)
\end{align}
\textcolor{black}{with $K_S=k_{-1}/k_1$. }
Beyond these rigorously proven asymptotic estimates, numerical simulations suggest a sharper 
\textcolor{black}{bound
\begin{equation*}
    \varepsilon_{\rm opt}=\varepsilon_{SSl}\dfrac{K_S+s_0}{K_M+s_0}<\varepsilon_{SSl}.
\end{equation*}
Thus, while there may be problems with obtaining $K_S$ from experimental data, one may use 
$\dfrac{K_S+s_0}{K_M+s_0}<1$ and thus get error estimates involving only quantities that are 
controllable, or obtainable from \textcolor{black}{fitting progress curves or initial rate experiments}. In 
particular, this provides a mathematical foundation to the relevance of the Segel-Slemrod 
parameter $\varepsilon_{SSl}$.}

\subsection{Open challenges within the laboratory setting}
The Michaelis--Menten equation, 
\[
\dot s = - \dot p = -\dfrac{k_2 e_0 s}{K_M+s},
\]
involves two parameters, the Michaelis constant ($K_M$) and catalytic constant $(k_2)$ when 
the initial enzyme concentration ($e_0$) and initial substrate concentration ($s_0$) 
are known and can be controlled.

In principle, experimental scientists can estimate $k_2$ and $K_M$ via steady-state initial
rate experiments with the Michaelis--Menten equation, or steady-state progress curve
experiments with the Schnell--Mendoza equation \cite{SchMen}. However, there is a fundamental
problem with those parameter estimations. It requires to have {\em prior} knowledge of
the duration of transient $t_{\rm cross}$ and substrate depletion in the transient 
phase $s(t=t_{\rm cross})$, assuming sufficiently small $\varepsilon_{SSl}$. The fundamental 
goal of the present paper is to provide rigorous estimates for $t_{\rm cross}$, 
$s(t=t_{\rm cross})$ as well as $\varepsilon_{SSl}$ from a mathematical perspective. 

Generally, the role of our theoretical results is to provide consistency checks for 
experimental conclusions.  Our estimates for the crossing times involve only parameters 
that are controllable or amenable to determination by experiments, though challenges remains
in the unique estimation of $k_1$. In this respect, our mathematical results remain to be
explored in the experimental laboratory setting. By assuming sufficiently small 
$\varepsilon_{SSl}$, our theoretical results might make possible to obtain an educated 
guess for  $s_{\rm cross}$ by \eqref{scrossimple} in enzyme assays. By identifying the 
time when the guess for $s_{\rm cross}$ is attained, we could obtain an estimate for 
$t_{\rm cross}$, which in turn, with known $s_0$ and $K_M$, and equation \eqref{tcrosssimple}, 
could provide an estimate for $k_1$. Our results could also be used to check the 
consistency of experimental results by measuring the end of the transition time,
or the substrate depletion during the transient phase in steady-state experiments.

Interestingly, the same problem already was present with the Segel--Slemrod timescale 
$(k_1(s_0+K_M))^{-1}$, and it is actually an inherent feature of any parameter estimation 
that is solely based on the Michaelis--Menten equation \eqref{classmmeq}. The essential new aspect
of our work is that we obtained rigorous asymptotic expressions for the substrate loss in 
the transient phase, as well as for the approximation error, that only involve 
$\varepsilon_{SSl}=e_0/(K_M+s_0)$. But rigorous experimental protocols require further quantitative 
information --- e.g.\ about the onset of the slow time regime --- that is not readily available.
This remains an open problem for exploration in future work.

\section{Appendix}
\subsection{A quick-reference guide}
\textcolor{black}{
Tables~\ref{tab:PECs1} to \ref{tab:PECs3} provide essential constants and critical parameters 
for the Michaelis--Menten reaction mechanism. These are pivotal for designing accurate laboratory 
measurements, such as initial rates and progress curves, for reliable parameter estimation.}

\textcolor{black}{
Table~\ref{tab:PECs1} defines crucial steady-state constants of the Michaelis-Menten reaction.
These are the constants generally estimated in the laboratory.}

\textcolor{black}{
Table~\ref{tab:PECs2} introduces the foundational small parameter and fast timescale defined by 
Segel \& Slemrod. These concepts are instrumental in estimating the key parameters discussed in 
this paper.}

\textcolor{black}{
Table~\ref{tab:PECs3} serves as a quick reference for all critical parameters defined in this paper. 
Reliability indicators ($+$ for rigorous asymptotics, $++$ for asymptotics with rigorous upper and lower bounds) 
highlight the robustness of each estimate (details in Section 3). The Michaelis--Menten approximation
error bound, $\varepsilon_{SSl}$, is weaker than the previous one, $\varepsilon_{opt}$, but does 
not involve $K_S$, which may be unavailable. The accuracy of these estimates improves with smaller 
$\varepsilon_{SSl}$, a parameter initially unknown.}

\textcolor{black}{
In steady-state experiments, assuming a sufficiently small Segel-Slemrod parameter initially allows 
for estimating $K_M$ and $v_\infty$ using the Michaelis--Menten equation~\eqref{classmmeq}. If 
initial concentrations are known, this allows to calculate an estimate for $\varepsilon_{SSl}$
and performing a consistency check.}

\begin{table} 
\begin{tcolorbox}[arc=0pt,boxrule=0pt]
  \sisetup{group-minimum-digits = 4}
  \centering
  \caption{Michaelis--Menten reaction constants}
  \label{tab:PECs1}
  \begin{tabular}{ccccS[table-format=5]ll} 
    \toprule
    \thead{Parameter} & \thead{Expression} & \thead{Name} \\
    \midrule
    $K_M$ & $(k_{-1}+k_2)/k_1$ & Michaelis constant \\
    $K$ & $k_2/k_1$   & Van Slyke-Cullen constant \\
    $K_S$ & $k_{-1}/k_1=K_M-K$   & Dissociation constant  \\
    $v_\infty$ & $k_2e_0$  & Limiting rate  \\
    \bottomrule
  \end{tabular}
\end{tcolorbox} 
\end{table}

\begin{table} 
\begin{tcolorbox}[arc=0pt,boxrule=0pt]
  \sisetup{group-minimum-digits = 4}
  \centering
  \caption{Parameters from Segel \& Slemrod \cite{SSl}}
  \label{tab:PECs2}
  \begin{tabular}{ccccS[table-format=5]ll} 
    \toprule
    \thead{Parameter} & \thead{Expression} & \thead{Name} \\
    \midrule
    $\varepsilon_{SSl}$ & $e_0/(s_0+K_M)$ & Segel \& Slemrod small parameter\\
    $t_{SSl}$ & $[k_1(s_0+K_M)]^{-1}$  & Segel \& Slemrod fast timescale \\
    \bottomrule
  \end{tabular}
\end{tcolorbox} 
\end{table}

\begin{table} 
\begin{tcolorbox}[arc=0pt,boxrule=0pt]
  \sisetup{group-minimum-digits = 4}
  \centering
  \caption{Current parameters (lowest order terms only)}
  \label{tab:PECs3}
  \begin{tabular}{ccccS[table-format=5]ll} 
    \toprule
    \thead{Parameter} & \thead{Expression} & \thead{Description}&\thead{Reliability} \\
    \midrule
    $\Delta^{**}$ & $-\varepsilon_{SSl}\log \varepsilon_{SSl}$ & substrate depletion in transient & ++\\
    $t_{\rm cross}$ & $-t_{SSl}\log \varepsilon_{SSl}$ & QSS onset time& ++ \\
    $\varepsilon^{\ddagger}$ & $-\varepsilon_{SSl}\log \varepsilon_{SSl}$ & MM approximation error bound &++\\
    $\varepsilon_{opt}$ & $\varepsilon_{SSl}\cdot (s_0+K_S)/(s_0+K_M)$ & MM approximation error bound &+\\
    $\varepsilon_{SSl}$ & & MM approximation error bound &+\\
    \bottomrule
  \end{tabular}
\end{tcolorbox} 
\end{table}

\subsection{The Michaelis--Menten reaction mechanism with a low enzyme and substrate
binding rate constant ($k_1\to 0$)}
The case of low enzyme concentration in the Michaelis--Menten reaction mechanism is not 
the only one which leads to a singular perturbation reduction via Tikhonov and Fenichel. 
We can also obtain reductions in the limit $k_2\to 0$ (which will be discussed in future 
work) and in the limit $k_1\to 0$.\footnote{It is known from \cite{gwz} that these are all 
the possible ``small parameters'' for singular perturbation scenarios.} 

The case of low enzyme and substrate binding $k_1\to 0$ is of some interest since it 
represents the commonly expressed setting ``$s_0\ll K_M$'' in terms of singular perturbations 
(while letting $s_0\to 0$ does not). The arguments so far were motivated by the scenario with 
$e_0\to 0$, but all the estimates obtained in Sections~\ref{sec:review} and \ref{estsec} do hold, 
possibly upon rewriting some expressions involving $K_M$, without any restriction on the 
reaction rates and concentrations involved.

So here, we briefly summarize the pertinent results when $k_1\to 0$, while $e_0$ is 
bounded below.\footnote{Letting both $k_1$ and $e_0$ tend to zero leads to a degenerate 
Tikhonov-Fenichel reduction with trivial right hand side, which is of little interest.} 
Thus, $k_1=\varepsilon k_1^*$ with $\varepsilon\to 0$. The ``crossing Lemma'', 
{\bf Lemma~\ref{schahelem}} holds for all Michaelis--Menten type reaction mechanisms, so one 
may still employ the sQSS manifold given by $c=g(s)$ in \eqref{classsmeq} for the analysis 
of the system. Note that the first order approximation of the slow manifold is given by
\begin{equation*}
    c=\widehat g(s):=\dfrac{k_1e_0s}{k_{-1}+k_2},
\end{equation*}
but the discrepancy between $g$ and $\widehat g$ is of order $\varepsilon^2$, and the 
distinguished role of the sQSS manifold [defined by \eqref{classsmeq}] for the time 
course of complex concentration remains convenient in the analysis. One may also keep 
the Michaelis--Menten equation, in the version
\begin{equation*}
    \dot s =-\dfrac{k_1k_2e_0s}{k_{-1}+k_2+k_1s},
\end{equation*}
noting that the standard reduction procedure yields the right-hand side
\begin{equation*}
    -\dfrac{k_1k_2e_0s}{k_{-1}+k_2}= -\dfrac{k_1k_2e_0s}{k_{-1}+k_2+k_1s}+o(\varepsilon),
\end{equation*}
and for Tikhonov's theorem higher-order terms on the right-hand side are irrelevant.

It seems appropriate to take $\varepsilon_{RS}=\dfrac{k_1e_0}{k_{-1}+k_2}$ as a benchmark 
here. As noted, the relevant expressions obtained in Section~\ref{estsec} remain unchanged, 
but we record some asymptotics with the dots denoting higher order terms with respect to 
$\varepsilon_{RS}$:
\begin{equation*}
    \begin{array}{rcl}
         \varepsilon_{SSl}&=&\varepsilon_{RS}+\cdots\\
         t_{SSl}&=&\dfrac{1}{k_{-1}+k_2}+\cdots\\
         t_\ell^\dagger&=&\dfrac{1}{k_{-1}+k_2}\left(\log\dfrac{1}{\varepsilon_{RS}}+\log\dfrac{k_{-1}+k_2}{k_2}+\cdots\right)\\
         C^*&=&\dfrac{k_{-1}+k_2}{k_2}+\cdots\\
         t_u^\dagger(1)&=&\dfrac{1}{k_{-1}+k_2}\left(\log\dfrac{1}{\varepsilon_{RS}}+\log\dfrac{k_{-1}+k_2}{k_2}+\cdots\right)\\
         \varepsilon_\infty&=&\varepsilon_{RS}\cdot\dfrac{k_{-1}}{k_{-1}+k_2}+\cdots\\
    \end{array}
\end{equation*}
For the substrate depletion during the transient phase, one gets from {\bf Proposition~\ref{stuplem}} 
and {\bf Lemma~\ref{tuto1}}:
\begin{equation*}
    \dfrac{s_0-s_{\rm cross}}{s_0}\lesssim \varepsilon_{RS}\left(\log\dfrac{1}{\varepsilon_{RS}}+\log\dfrac{k_{-1}+k_2}{k_2}\right)+\cdots
\end{equation*}
We may summarize this by stating that in lowest order the dynamics is unaffected by initial 
substrate, in marked contrast to the low enzyme case.

\section*{Acknowledgments}
\textcolor{black}{We thank two anonymous reviewers for a thorough reading of the manuscript, and for constructive 
and helpful comments.}


\end{document}